\newcommand{\Z}{\mathbb{Z}}
\newcommand{\Q}{\mathbb{Q}}
\newcommand{\torsion}{\textrm{torsion}}
\newcommand{\cm}{cohomologically minimal}
\newcommand{\Grid}{Grid }
\newcommand{\String}{String }
\newcommand{\Hex}{Hex }
\newcommand{\RAAG}{RAAG}
\newcommand{\RAAGs}{RAAGs}
\newtheorem*{rep@theorem}{\rep@title}
\newcommand{\newreptheorem}[2]{
\newenvironment{rep#1}[1]{
 \def\rep@title{#2 \ref{##1}}
 \begin{rep@theorem}}
 {\end{rep@theorem}}}
\newtheorem{theorem}{Theorem}[section]
\newtheorem{corollary}[theorem]{Corollary} 
\newtheorem{lemma}[theorem]{Lemma} 
\newtheorem{conj}[theorem]{Conjecture} 
\newtheorem{prop}[theorem]{Proposition} 	
\newtheorem{defn}[theorem]{Definition}
\theoremstyle{definition}
\theoremstyle{remark}
\newtheorem*{remark}{Remark}
\newtheorem*{example}{Example}
\newtheorem{numbered example}[theorem]{Example}
\title{The minimum $b_2$ problem for right-angled Artin groups}
\author{Alyson Hildum}
\address{Alyson Hildum \\ Dept. of Mathematics \& Statistics \\ McMaster University \\ 1280 Main Street West \\ Hamilton \\ Ontario, Canada L8S 4K1}
\begin{document}

\usetikzlibrary{patterns}
\usetikzlibrary{backgrounds}
\pgfdeclarelayer{background}
\pgfdeclarelayer{foreground}
\pgfsetlayers{background,main}

%%%%%%%%%%%%%%%%%%%%%%%%%%%%%%%%%%%%%%%%%%%%%%%%%%%%%%%%%%%%%%%%%%%%%%%%%%%%%%%%%%%%%%%%%%%%%%%%%%%%%%%%%%%%%%%%%%%%
%%%%%%%%%%%%%%%%%%%%%%%%%%%%%%%%%%%%%%%%%%%% ABSTRACT %%%%%%%%%%%%%%%%%%%%%%%%%%%%%%%%%%%%%%%%%%%%%%%%%%%%%%%%%%%%%%
%%%%%%%%%%%%%%%%%%%%%%%%%%%%%%%%%%%%%%%%%%%%%%%%%%%%%%%%%%%%%%%%%%%%%%%%%%%%%%%%%%%%%%%%%%%%%%%%%%%%%%%%%%%%%%%%%%%%
\begin{abstract}
This paper focuses on tools for constructing 4-manifolds that have fundamental group $G$ isomorphic to a right-angled Artin group and that are also minimal, in the sense that they minimize $b_2(M)$, the dimension of $H_2(M;\Q)$. For a finitely presented group $G$, define $h(G) = \min\{ b_2(M) | M \in \mathcal M(G) \}$.  

In this paper, we explore the ways in which we can bound $h(G)$ from below using group cohomology and the tools necessary to build 4-manifolds that realize these lower bounds. We give solutions for right-angled Artin groups, or \RAAGs, when the graph associated to $G$ has no 4-cliques, and further we reduce this problem to the case when the graph is connected and contains only 4-cliques. We then give solutions for many infinite families of \RAAGs\ and provide a conjecture to the solution for all \RAAGs. 
\end{abstract}

\maketitle

%%%%%%%%%%%%%%%%%%%%%%%%%%%%%%%%%%%%%%%%%%%%%%%%%%%%%%%%%%%%%%%%%%%%%%%%%%%%%%%%%%%%%%%%%%%%%%%%%%%%%%%%%%%%%%%%%%%%
%%%%%%%%%%%%%%%%%%%%%%%%%%%%%%%%%%%%%%%%%%%% INTRODUCTION %%%%%%%%%%%%%%%%%%%%%%%%%%%%%%%%%%%%%%%%%%%%%%%%%%%%%%%%%%
%%%%%%%%%%%%%%%%%%%%%%%%%%%%%%%%%%%%%%%%%%%%%%%%%%%%%%%%%%%%%%%%%%%%%%%%%%%%%%%%%%%%%%%%%%%%%%%%%%%%%%%%%%%%%%%%%%%%
\section{Introduction}

It is well known that for any finitely presented group $G$ there is a closed, orientable 4-dimensional manifold with fundamental group isomorphic to $G$. This paper explores the problem of constructing a 4-manifold $M$ with particular fundamental group that minimizes $b_2(M)$, the dimension of $H_2(M;\Q)$. We will refer to this as the \emph{minimum $b_2$ problem}. Many have researched this topic, including Hausmann and Weinberger \cite{HausmannWeinberger85}, Baldridge and Kirk \cite{BaldridgeKirk07, BaldridgeKirk09}, Eckmann \cite{Eckmann97}, Johnson and Kotschick \cite{JohnsonKotschick93} and independently Kotschick \cite{Kotschick94, Kotschick06}, Lu\"ck \cite{Luck94}, and most recently Kirk and Livingston \cite{KirkLivingston05}. However, the minimum $b_2$ problem remains open for all but a few classes of groups. 

Let $\mathcal M (G)$ denote the class of closed, oriented topological 4-manifolds with fundamental group isomorphic to a fixed group $G$. For a finitely presented group $G$, define $h(G) = \min\{ b_2(M) | M \in \mathcal M(G) \}$. Calculations of $h$ are known for free groups and free abelian groups, but little more. The underlying goal of the research represented in this paper is to generalize these calculations to right-angled Artin groups, of which free and free abelian groups are special cases. In particular, a right-angled Artin group (abbreviated \RAAG) has a presentation with a finite generating set where the relations consist solely of commutators between generators. RAAGs are also known as graph groups due to the fact that their presentations can uniquely be represented by graphs, where each vertex represents a generator and each edge between vertices represents a commutator relation between those generators. Hence, $F_n$ is associated to a graph with $n$ vertices with no edges and $\Z^n$ is associated to a complete graph with $n$ vertices.

We begin by exploring the minimum $b_2$ problem for arbitrary finitely presented groups, and show how the group cohomology plays an important role in bounding $h$ from below. Specifically, we prove the following useful proposition that holds for finitely presented groups.

\begin{prop} \label{hnewlowerbound}
For a finitely presented group $G$, 
\begin{equation*} 2b_2(G) - m_2(G) \leq h(G), 
\end{equation*}
where $m_2(G)$ is the maximum rank of the symmetric bilinear form
\begin{equation}
H^2(G;\Z_2) \times H^2(G;\Z_2) \to \Z_2, (a,b) \mapsto (a\cup b) \cap \alpha
\label{formmod2}
\end{equation}
taken over all choices of $\alpha \in H_4(G;\Z_2)$. 
\end{prop}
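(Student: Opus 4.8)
The plan is to exploit Poincar\'e duality on a manifold $M \in \mathcal M(G)$ through the classifying map $c \colon M \to K(G,1)$, working with $\Z_2$ coefficients so that the symmetric form \eqref{formmod2} appears directly. First I would record that, because $K(G,1)$ is obtained from $M$ by attaching cells of dimension $\geq 3$, the pair $(K(G,1),M)$ is $2$-connected; hence $c^*\colon H^2(G;\Z_2) \to H^2(M;\Z_2)$ is injective (and likewise rationally). Let $V = c^*H^2(G;\Z_2) \subseteq H^2(M;\Z_2)$, so $\dim_{\Z_2} V = \dim_{\Z_2} H^2(G;\Z_2)$. Since $M$ is a closed manifold, the mod $2$ cup-product pairing $H^2(M;\Z_2)\times H^2(M;\Z_2)\to H^4(M;\Z_2)\cong \Z_2$ is nondegenerate.

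Next I would identify the restriction of this pairing to $V$. Writing $[M]_2 \in H_4(M;\Z_2)$ for the mod $2$ fundamental class and $\alpha = c_*[M]_2 \in H_4(G;\Z_2)$, the projection formula gives, for $a,b \in H^2(G;\Z_2)$, the identity $(c^*a \cup c^*b)\cap [M]_2 = (a\cup b)\cap \alpha$, so the form on $V$ is exactly \eqref{formmod2} for this $\alpha$, and its rank is therefore at most $m_2(G)$. The key piece of linear algebra is then: if a finite-dimensional vector space $W$ carries a nondegenerate symmetric bilinear form and $V \subseteq W$ is a subspace on which the form has rank $\rho$, then $\dim W \geq 2\dim V - \rho$. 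I would prove this by setting $V_0 = V \cap V^\perp$ (so $\dim V_0 = \dim V - \rho$) and using nondegeneracy to produce a subspace $U$ of dimension $\dim V_0$ pairing dually with $V_0$; since $V \subseteq V_0^\perp$ while $U \cap V_0^\perp = 0$, the sum $V \oplus U$ is direct and has dimension $2\dim V - \rho$. Applied to $W = H^2(M;\Z_2)$ this yields $\dim_{\Z_2} H^2(M;\Z_2) \geq 2\dim_{\Z_2}H^2(G;\Z_2) - m_2(G)$.

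Finally I would convert these $\Z_2$-dimensions into rational Betti numbers. Using the universal coefficient theorem together with Poincar\'e duality on the closed oriented $4$-manifold $M$, the torsion of $H^2$ and of $H^3$ is controlled by the torsion of $H_1(M;\Z) = G^{\mathrm{ab}}$, giving $\dim_{\Z_2}H^2(M;\Z_2) = b_2(M) + 2d$, where $d$ is the number of even-order cyclic summands of the torsion of $G^{\mathrm{ab}}$. The same torsion contributes to $H^2(G;\Z_2)$, so $\dim_{\Z_2}H^2(G;\Z_2) \geq b_2(G) + d$. Substituting both into the inequality above, the $2d$ correction terms cancel and leave $b_2(M) \geq 2b_2(G) - m_2(G)$; minimizing over $M \in \mathcal M(G)$ gives the claim. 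The main obstacle is precisely this last reconciliation: the mod $2$ argument only bounds $\dim_{\Z_2}H^2(M;\Z_2)$, which exceeds $b_2(M)$, so one must check that the torsion corrections on the two sides are governed by the same invariant of $G^{\mathrm{ab}}$ and cancel exactly. Without this bookkeeping the stated bound would not follow, and it is genuinely stronger than the rational analogue obtained by running the same argument over $\Q$, which is why the computation must be carried out mod $2$.
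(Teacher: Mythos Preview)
Your argument is correct and follows the same strategy as the paper's proof: restrict the nondegenerate mod~$2$ intersection form on $H^2(M;\Z_2)$ to the image of $H^2(G;\Z_2)$ under the classifying map, identify that restriction with the form~\eqref{formmod2} via the projection formula, and invoke the linear-algebra fact that a subspace $V$ on which a nondegenerate form has rank~$\rho$ forces $\dim W \ge 2\dim V - \rho$. If anything, your final paragraph is more careful than the paper, which simply asserts that ``the dimension of the form is $b_2(G)$'' without addressing the possible discrepancy between $\dim_{\Z_2}H^2(G;\Z_2)$ and $b_2(G)$; your bookkeeping $\dim_{\Z_2}H^2(M;\Z_2)=b_2(M)+2d$ and $\dim_{\Z_2}H^2(G;\Z_2)\ge b_2(G)+d$ is exactly what makes the cancellation explicit for arbitrary finitely presented $G$ (for RAAGs the homology is torsion-free and the issue does not arise).
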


This proposition yields our first theorem for \RAAGs:
\begin{theorem} \label{trivialH4}
If a \RAAG\ $G$ has trivial $H^4(G)$, then $h(G)=2b_2(G)$. 
\end{theorem}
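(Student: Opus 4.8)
The plan is to prove the two inequalities $2b_2(G) \le h(G)$ and $h(G) \le 2b_2(G)$ separately. The lower bound comes directly from Proposition \ref{hnewlowerbound}, so the first task is to show that $m_2(G) = 0$ under the hypothesis. The key input is that the integral (co)homology of a RAAG is free abelian: concretely, $H_n(G;\Z) \cong \Z^{c_n(\Gamma)}$, where $c_n(\Gamma)$ counts the $n$-cliques of the defining graph $\Gamma$, so in particular $b_2(G) = c_2(\Gamma) = |E(\Gamma)|$. Since $H^4(G)$ is assumed trivial there are no $4$-cliques, and the absence of torsion lets me apply the universal coefficient theorem to conclude $H^4(G;\Z_2) = 0$ as well. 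Then every cup product $a \cup b$ with $a,b \in H^2(G;\Z_2)$ already vanishes in $H^4(G;\Z_2)$, so the form \eqref{formmod2} is identically zero for every $\alpha \in H_4(G;\Z_2)$; hence $m_2(G) = 0$ and Proposition \ref{hnewlowerbound} gives $2b_2(G) \le h(G)$.

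For the matching upper bound I would construct an explicit $M \in \mathcal M(G)$ with $b_2(M) = 2b_2(G)$. Writing $G = A_\Gamma$ with the standard RAAG presentation $\langle x_1, \dots, x_g \mid [x_i,x_j],\ (i,j) \in E(\Gamma)\rangle$, I start from $X = \#_g(S^1 \times S^3)$, which is closed, oriented, has $\pi_1(X) \cong F_g$, and $b_2(X) = 0$. For each edge $(i,j) \in E(\Gamma)$ I choose a loop representing the commutator $[x_i,x_j]$, realize these loops by disjoint embedded circles (general position in dimension four), and perform surgery on each, replacing a tubular neighborhood $S^1 \times D^3$ by $D^2 \times S^2$. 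The normal bundle of a circle in an oriented $4$-manifold is a trivial oriented $\mathbb R^3$-bundle over $S^1$, so the required framings exist, and the surgeries keep $M$ closed and oriented. By van Kampen each surgery imposes exactly the relation $[x_i,x_j]=1$, so the resulting manifold $M$ has $\pi_1(M) \cong A_\Gamma = G$ and $M \in \mathcal M(G)$.

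It then remains to verify the Betti number count. Because each commutator is trivial in the abelianization, the surgeries do not alter $H_1$, and its final rank is controlled by the group: $b_1(M) = \operatorname{rank} H_1(G;\Z) = g$. An Euler characteristic computation finishes the job: $\chi(\#_g(S^1\times S^3)) = 2 - 2g$, and each of the $e = |E(\Gamma)| = b_2(G)$ surgeries changes the Euler characteristic by $+2$, so $\chi(M) = 2 - 2g + 2e$. Combining this with Poincaré duality and $b_1(M) = g$, which give $\chi(M) = 2 - 2b_1(M) + b_2(M) = 2 - 2g + b_2(M)$, I read off $b_2(M) = 2e = 2b_2(G)$. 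Hence $h(G) \le 2b_2(G)$, and together with the lower bound this yields $h(G) = 2b_2(G)$.

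The step I expect to require the most care is the upper-bound construction rather than the cohomological lower bound: one must check that the surgery circles can simultaneously be embedded disjointly and framed, that the surgeries realize exactly the RAAG presentation (introducing no extra relations and leaving $b_1$ fixed), and that orientability is preserved throughout. The Betti count itself is then forced by topological invariance of $\chi$ together with Poincaré duality, so the only genuine content is the manifold-building machinery — precisely the tools the later sections are designed to supply.
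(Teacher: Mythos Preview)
Your proof is correct and follows essentially the same approach as the paper: the lower bound comes from $m_2(G)=0$ together with Proposition~\ref{hnewlowerbound}, and the upper bound from the standard surgery construction on $\#_g(S^1\times S^3)$. The only differences are presentational: the paper simply cites Corollary~\ref{hbounds} for the upper bound (whose proof in Theorem~\ref{qbounds} is exactly the doubled-handlebody/surgery construction you spell out), and it is less explicit than you are about invoking freeness of RAAG (co)homology and the Universal Coefficient Theorem to pass from $H^4(G)=0$ to $H^4(G;\Z_2)=0$.
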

This result holds for all \RAAGs\ with associated graphs of dimension 3 (graphs with no 4-cliques). For \RAAGs\ with associated graphs of higher dimension, the calculation of $h$ depends on the structure of the graph.  

In Section \ref{tools} we discuss techniques for constructing manifolds that minimize $b_2$. In Section \ref{infinitefamilies} we prove that the lower bound given in the proposition above is an equality for several infinite families of \RAAGs. These results provide evidence for the following conjecture:

\begin{conj} \label{Conjecture}
If $G$ is a \RAAG, $h(G) = 2b_2(G)-m_2(G)$.
\end{conj}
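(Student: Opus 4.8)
Because Proposition \ref{hnewlowerbound} already supplies $2b_2(G) - m_2(G) \le h(G)$ for every finitely presented group, the entire content of the conjecture is the reverse inequality: for each \RAAG\ $G$ one must exhibit a single manifold $M \in \mathcal M(G)$ whose second Betti number attains the bound, $b_2(M) = 2b_2(G)-m_2(G)$. The plan is therefore purely constructive. Using the reduction advertised in the introduction, I would first assume $\Gamma$ is connected and that its flag complex is built entirely from $4$-cliques, so that the only source of nontrivial $H^4(G)$ — hence of a nonzero form \eqref{formmod2} — is the collection of $4$-tori sitting inside the Salvetti complex of $G$.

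The engine behind the upper bound is the identity relating the conjectural form to the intersection form of a candidate manifold. If $c\colon M \to BG$ classifies the universal cover and $\alpha = c_*[M]_{\Z_2} \in H_4(G;\Z_2)$, then for $a,b \in H^2(G;\Z_2)$ one has $(c^*a \cup c^*b)\cap[M] = (a\cup b)\cap\alpha$, so the mod $2$ intersection form of $M$ restricted to $\operatorname{im} c^*$ is exactly the form of \eqref{formmod2} evaluated at $\alpha$. Consequently the construction must do two things at once: choose the oriented bordism class of $M$ so that $c_*[M]_{\Z_2}$ is a maximizer for the rank $m_2(G)$, and then trim every class of $H_2(M;\Q)$ that is neither forced by $\operatorname{im} c^*$ nor needed as a Poincar\'e-dual partner for the radical of that image. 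Since the cohomology of a \RAAG\ is torsion free, every class in $H_4(G;\Z_2)$ lifts to $H_4(G;\Z)$, and every integral class is realized by a map from a closed oriented $4$-manifold; this supplies an initial $M_0$ with $c_*[M_0]_{\Z_2}=\alpha$, although with uncontrolled $b_2$.

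From $M_0$ I would assemble the optimal manifold from graph-theoretic building blocks using the tools of Section \ref{tools}: a $4$-clique contributes a copy of $T^4$ (which realizes the full rank-$6$ form and already meets the bound, as $2\cdot 6-6=6$), lower-dimensional cliques contribute the minimal pieces underlying Theorem \ref{trivialH4}, and a shared sub-clique is handled by gluing blocks along the corresponding subtorus. Because a \RAAG\ is a graph product, van Kampen identifies the fundamental group of such an assembly with $G$; the remaining task is to push $b_2$ down to $2b_2(G)-m_2(G)$ by surgering away the excess homology. Concretely, the classes to be killed lie in $\ker\!\big(c_*\colon H_2(M;\Z)\to H_2(G;\Z)\big)$, which is the image of $\pi_2(M)$; in the topological category these are represented by embedded spheres admitting algebraically dual spheres (after stabilizing by connected sums with $S^2\times S^2$ if needed), so surgery on them lowers $b_2$ by two at a time while preserving both $\pi_1=G$ and $c_*[M]_{\Z_2}=\alpha$.

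The crux — and where I expect the real difficulty to lie — is the bookkeeping that makes the two halves meet exactly. One must verify that the graph decomposition is compatible with both invariants simultaneously: that $b_2(G)$ and the maximal rank $m_2(G)$ add up across clique-gluings in the same way the blocks' Betti numbers do, so that no unavoidable excess $2$-cycle is introduced at a gluing locus, and that the surgeries used to realize the minimal Poincar\'e pairing for the radical of $\operatorname{im} c^*$ do not disturb the rank already installed by $\alpha$. The mod $2$ nature of $m_2(G)$ is the sharpest point, since the form \eqref{formmod2} can have strictly larger rank than its rational counterpart; the construction must therefore realize this extra rank through the mod $2$ reduction of the intersection form without it being visible rationally, and controlling this phenomenon uniformly over all admissible graphs is what I expect to be the genuine obstacle to completing the induction.
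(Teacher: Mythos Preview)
The statement you are attempting to prove is labeled a \emph{Conjecture} in the paper, and the paper does not prove it; the concluding section states explicitly that the author knows no counterexamples but also no general proof, and that ``the greatest obstacle in proving this conjecture is in developing a way to generalize current results without using induction.'' So there is no ``paper's own proof'' to compare against---only partial results for specific infinite families (Theorems \ref{trivialH4}, \ref{theorem grids}--\ref{theorem 7-cliques string}) and the reduction Theorems \ref{freeproducts}--\ref{breakdown}.

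Your outline is a reasonable heuristic, and you are honest that the bookkeeping at the end is where the real work lies. But the paper itself supplies a concrete obstruction to the inductive/gluing strategy you describe. In the concluding section the author exhibits a \RAAG\ $G$ (Figure \ref{boxes}) that is a quotient of $G_1*G_2$ with $G_1\cong G_2\cong\Z^4$, for which it is \emph{impossible} to build a realizing manifold from the realizing manifolds $M_1=M_2=T^4$ via $4$-reductions and commutator surgeries: any such construction on $M_1\#M_2$ overshoots $b_2$. A realizing manifold for $G$ exists, but it must be built from scratch (from $\#8(S^1\times S^3)$ with a different pattern of $4$-reductions). This shows that your step ``a shared sub-clique is handled by gluing blocks along the corresponding subtorus'' does not in general produce the correct $b_2$, and that the surgery-down procedure you sketch---killing spherical classes in $\ker c_*$---cannot be done uniformly, because the excess classes created by a bad gluing need not be represented by spheres with algebraically dual spheres without further stabilization that you then cannot undo. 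The paper's verified cases all work by finding, for each family, an ad hoc global construction whose $b_2$ matches $2b_2(G)-m_2(G)$ on the nose; no uniform mechanism is known.
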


We also prove three inductive theorems which together reduce the minimum $b_2$ problem to one in which the associated graphs are connected and contain only 4-cliques:

\begin{theorem} 
\label{freeproducts}
Let $G_1$ and $G_2$ be \RAAGs\ such that $h(G_i)=2b_2(G_i)-m_2(G_i)$ for $i=1,2$. Then 
$h(G_1*G_2) = h(G_1)+h(G_2)$.
\end{theorem}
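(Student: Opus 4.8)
The plan is to prove the two inequalities $h(G_1 * G_2) \geq h(G_1) + h(G_2)$ and $h(G_1 * G_2) \leq h(G_1) + h(G_2)$ separately, where the hypothesis on the $G_i$ enters only in the first. For the lower bound I would invoke Proposition \ref{hnewlowerbound} to get $h(G_1 * G_2) \geq 2 b_2(G_1*G_2) - m_2(G_1 * G_2)$, and then show that both $b_2$ and $m_2$ are additive under free product. The starting point is that a free product of groups corresponds to a wedge of classifying spaces, $B(G_1 * G_2) \simeq BG_1 \vee BG_2$. The wedge axiom in reduced (co)homology then gives $\tilde H^n(G_1 * G_2; R) \cong \tilde H^n(G_1; R) \oplus \tilde H^n(G_2;R)$ for every $n \geq 1$ and every coefficient ring $R$, which immediately yields $b_2(G_1 * G_2) = b_2(G_1) + b_2(G_2)$ along with the splitting $H_4(G_1 * G_2; \Z_2) \cong H_4(G_1;\Z_2) \oplus H_4(G_2;\Z_2)$.

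For the $m_2$ term, writing a class $\alpha \in H_4(G_1 * G_2;\Z_2)$ as $\alpha_1 + \alpha_2$ under this splitting, I would argue that the form in \eqref{formmod2} is block diagonal with respect to the decomposition $H^2(G_1*G_2;\Z_2) \cong H^2(G_1;\Z_2) \oplus H^2(G_2;\Z_2)$. The key point is that in a wedge the cup product of two positive-degree classes pulled back from different factors vanishes, so for $a = a_1 + a_2$ and $b = b_1 + b_2$ one has $a \cup b = a_1 \cup b_1 + a_2 \cup b_2$ with $a_i \cup b_i \in H^4(G_i;\Z_2)$; capping with $\alpha = \alpha_1 + \alpha_2$ then kills the cross terms and leaves $(a_1 \cup b_1)\cap \alpha_1 + (a_2 \cup b_2)\cap \alpha_2$. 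Since the rank of a block-diagonal symmetric form is the sum of the ranks of its blocks, and since maximizing over $\alpha$ amounts to maximizing over $\alpha_1$ and $\alpha_2$ independently, $m_2(G_1 * G_2) = m_2(G_1) + m_2(G_2)$. Combining the two additivity statements and applying the hypothesis $h(G_i) = 2b_2(G_i) - m_2(G_i)$ gives $2b_2(G_1 * G_2) - m_2(G_1*G_2) = h(G_1) + h(G_2)$, which is the lower bound.

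For the upper bound I would exhibit a manifold directly. Choose minimal models $M_i \in \mathcal M(G_i)$ with $b_2(M_i) = h(G_i)$ and form the connected sum $M = M_1 \# M_2$. This is again a closed oriented 4-manifold, its fundamental group is the free product $\pi_1(M_1) * \pi_1(M_2) = G_1 * G_2$, and for closed oriented 4-manifolds one has $H_2(M_1 \# M_2;\Q) \cong H_2(M_1;\Q) \oplus H_2(M_2;\Q)$, so $b_2(M) = h(G_1) + h(G_2)$. Hence $h(G_1 * G_2) \leq b_2(M) = h(G_1) + h(G_2)$, and the two inequalities together give the claim.

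The routine part is the upper bound: the connected-sum construction is standard and does not use the hypothesis on the $G_i$. The main obstacle is the additivity of $m_2$, and specifically the block-diagonal claim: one must verify carefully that the cross cup products vanish at the level of group cohomology (via the wedge of Salvetti/classifying complexes) and that the cap product respects the $H_4$ splitting, so that the maximization over $\alpha$ genuinely decouples into the two factors. Once that is in place, the equality follows formally.
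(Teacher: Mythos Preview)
Your proposal is correct and follows essentially the same approach as the paper: establish the lower bound via Proposition~\ref{hnewlowerbound} together with the additivity of $b_2$ and $m_2$ under free products, and obtain the upper bound by taking the connected sum of realizing manifolds. You supply more detail than the paper does for the additivity of $m_2$ (via the wedge decomposition $BG_1 \vee BG_2$ and the vanishing of cross cup products), where the paper simply asserts that ``the bilinear form under the free product splits into a direct sum of forms,'' but the logical structure is identical.
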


\begin{theorem}
\label{wedging}
Let $\Gamma_1$ and $\Gamma_2$ be two graphs representing \RAAGs\ $G_1$ and $G_2$ such that $h(G_i)=2b_2(G_i)-m_2(G_i)$ for $i=1,2$.  Let $\{s_1,\dotsc,s_m\}$ and $\{t_1,\dotsc,t_m\}$ be two sets of pairwise non-adjacent vertices in $\Gamma_1$ and $\Gamma_2$, respectively. Suppose a new graph, $\Gamma$ is created by identifying $s_i$ with $t_i$, $i=1,\dotsc,m$. Then for the \RAAG\ $G$ represented by $\Gamma$, $h(G)=h(G_1)+h(G_2)$.
\end{theorem}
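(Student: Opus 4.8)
The plan is to identify $G$ as an amalgamated free product and then prove the two inequalities $h(G)\ge h(G_1)+h(G_2)$ and $h(G)\le h(G_1)+h(G_2)$ separately. Since the identified vertices $\{s_1,\dots,s_m\}$ are pairwise non-adjacent, they generate a copy of the free group $F_m$ inside each $G_i$, and because the shared subgraph on these vertices is a full (induced) subgraph of both $\Gamma_1$ and $\Gamma_2$ carrying no edges, the standard graph-of-groups decomposition for \RAAGs\ gives $G\cong G_1 *_{F_m} G_2$, amalgamated over this common free subgroup.

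For the lower bound I would apply Proposition \ref{hnewlowerbound} and compute $b_2(G)$ and $m_2(G)$ combinatorially from the cohomology ring of a \RAAG, which is the exterior face ring of the associated flag complex: $H^2$ is free on the edges, $H^4$ on the $4$-cliques, and the cup product $u_e\cup u_f$ of two edge classes is the generator of the $4$-clique they span (and zero otherwise; in particular it vanishes whenever $e$ and $f$ share a vertex, since $x_i^2=0$ even mod $2$). Because identifying the $s_i$ with the $t_i$ introduces no new edges or cliques, and because no edge and no $4$-clique can straddle the two sides, the edge sets and $4$-clique sets are disjoint unions; hence $b_2(G)=b_2(G_1)+b_2(G_2)$ and $H^4(G;\Z_2)=H^4(G_1;\Z_2)\oplus H^4(G_2;\Z_2)$, and dually for $H_4$. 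The crucial point is that any cross cup product of an edge class from $\Gamma_1$ with one from $\Gamma_2$ vanishes: a nonzero product would require the four vertices to span a $4$-clique, but such a clique would have to contain a private vertex from each side, and those are never adjacent. Thus for every $\alpha=\alpha_1+\alpha_2\in H_4(G;\Z_2)$ the form \eqref{formmod2} is block diagonal, with blocks the corresponding forms for $G_1$ and $G_2$, so its rank is additive and $m_2(G)=m_2(G_1)+m_2(G_2)$. Combining these and using the hypothesis $h(G_i)=2b_2(G_i)-m_2(G_i)$ gives $2b_2(G)-m_2(G)=h(G_1)+h(G_2)$, whence $h(G)\ge h(G_1)+h(G_2)$.

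For the upper bound I would realize $G$ by a direct construction using the tools of Section \ref{tools}. Choose closed oriented $4$-manifolds $M_i$ with $\pi_1(M_i)=G_i$ and $b_2(M_i)=h(G_i)$. In $M_1$, represent the generators $s_1,\dots,s_m$ by disjoint embedded circles; since their normal bundles are trivial, a regular neighborhood of these circles together with $m-1$ connecting arcs is a $4$-dimensional $1$-handlebody $H_1\cong\natural_m(S^1\times D^3)$ with $\pi_1(H_1)=F_m$ mapping to $G_1$ as the vertex inclusion and with boundary $N=\#_m(S^1\times S^2)$. Set $M_1'=M_1\setminus\mathrm{int}\,H_1$; van Kampen gives $\pi_1(M_1')=G_1$ with $\pi_1(N)\to\pi_1(M_1')$ realizing $F_m\hookrightarrow G_1$, and likewise for $M_2'$. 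Gluing $M_1'$ to $M_2'$ along an orientation-reversing identification of their common boundary $N$ that matches $s_i$ with $t_i$ produces a closed oriented $4$-manifold $M$ with $\pi_1(M)=G_1 *_{F_m} G_2=G$. To see that $b_2(M)=h(G_1)+h(G_2)$ without computing the homology directly, I would use the Euler characteristic: for a closed oriented $4$-manifold $\chi=2-2b_1+b_2$, where $b_1$ is determined by $\pi_1$; additivity of $\chi$ under the gluings (with $\chi(N)=0$ and $\chi(H_i)=1-m$) together with $b_1(G)=b_1(G_1)+b_1(G_2)-m$ forces $b_2(M)=b_2(M_1)+b_2(M_2)=h(G_1)+h(G_2)$. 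This gives $h(G)\le h(G_1)+h(G_2)$, and with the lower bound completes the proof (and incidentally confirms Conjecture \ref{Conjecture} for $G$).

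The main obstacle is the construction rather than the cohomological bookkeeping: one must verify that the free subgroup $F_m$ can genuinely be carved out of each $M_i$ as an embedded handlebody inducing the correct subgroup inclusion, that removing it leaves $\pi_1$ unchanged, and that the two resulting copies of $N$ are identified compatibly with orientations — the content packaged in Section \ref{tools}. The Euler characteristic computation then sidesteps a direct Mayer–Vietoris analysis of $H_2(M)$, which would otherwise be the most delicate step in pinning down $b_2$.
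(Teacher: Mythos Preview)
Your proof is correct. The lower bound argument matches the paper's: both observe that identifying pairwise non-adjacent vertices creates no new edges or $4$-cliques, so the form \eqref{formmod2} is block diagonal and $m_2(G)=m_2(G_1)+m_2(G_2)$, $b_2(G)=b_2(G_1)+b_2(G_2)$. Your treatment is more explicit about the amalgamated free product structure $G\cong G_1*_{F_m}G_2$ and the exterior face ring description, but the content is the same.

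The upper bound constructions genuinely differ. The paper simply forms $M_1\# M_2$ and performs $m$ surgeries on the curves $s_it_i^{-1}$, invoking Lemma \ref{surgeries} (surgery on a curve of infinite order in $H_1$ preserves $b_2$) to conclude $b_2(M)=h(G_1)+h(G_2)$. You instead excise a handlebody $\natural_m(S^1\times D^3)$ from each $M_i$ and glue the complements along $\#_m(S^1\times S^2)$, recovering $b_2$ via an Euler characteristic count. Both produce valid realizing manifolds. The paper's route is shorter and stays within the surgery toolkit of Section \ref{tools}; yours is more self-contained (it avoids Lemma \ref{surgeries} entirely) and makes the amalgamated-product structure of $\pi_1$ visible geometrically, at the cost of a few extra verifications --- that $\pi_1(M_i')\cong G_i$, that the boundary inclusions realize the intended $F_m\hookrightarrow G_i$, and that a gluing diffeomorphism of $\#_m(S^1\times S^2)$ matching the generators exists.
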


\begin{theorem}
\label{breakdown} 
Let $\Gamma$ be a graph associated to a \RAAG\ $G$. Let $r$ be the number of edges in $\Gamma$ that are not part of a 4-clique. Suppose the $r$ edges are deleted from $\Gamma$ resulting in $k$ disjoint subgraphs $\Gamma_1, \dotsc, \Gamma_k$. By construction, all the edges in the $\Gamma_i$ are necessarily part of at least one 4-clique. Let $G_i$ be the group associated to $\Gamma_i$. If $h(G_i)=2b_2(G_i)-m_2(G_i)$ for each $i$, then $h(G) = \sum_i h(G_i) + 2r$. 
\end{theorem}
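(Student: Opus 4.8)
The plan is to prove the two inequalities $h(G) \geq \sum_i h(G_i) + 2r$ and $h(G) \leq \sum_i h(G_i) + 2r$ separately, the first from the cohomological lower bound of Proposition \ref{hnewlowerbound} and the second by an explicit construction. The common thread is the additivity of the quantity $2b_2 - m_2$ across the decomposition. Recall that for a \RAAG\ $G = A_\Gamma$ the mod-$2$ cohomology is the exterior face ring of the clique complex of $\Gamma$: $H^k(A_\Gamma;\Z_2)$ has a basis indexed by the $k$-cliques of $\Gamma$, the class of an edge $e=\{a,b\}$ is the product of the two vertex classes, and a product of edge classes $e_1 \cup e_2$ is the class of the $4$-clique $e_1 \cup e_2$ when $e_1,e_2$ are disjoint and span a $4$-clique and is zero otherwise. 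I will use these facts throughout.

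For the lower bound, first note that the edge set of $\Gamma$ is the disjoint union of the edge sets of the $\Gamma_i$ together with the $r$ deleted edges, so $b_2(G) = \sum_i b_2(G_i) + r$. Next I claim the bilinear form of \eqref{formmod2} block-diagonalizes. A deleted edge $d$ lies in no $4$-clique, so $d \cup e = 0$ for every edge class $e$; hence the span of the deleted edges sits in the radical of the form for every $\alpha$. A $4$-clique of $\Gamma$ is connected and all of its edges lie in a $4$-clique, so none of its edges was deleted; thus every $4$-clique is contained in a single $\Gamma_i$, giving $H^4(G;\Z_2) = \bigoplus_i H^4(G_i;\Z_2)$ and, dually, $H_4(G;\Z_2) = \bigoplus_i H_4(G_i;\Z_2)$. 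Consequently the cup product of edges from different components vanishes, and the cup product of two edges of a common $\Gamma_i$ agrees with the cup product computed in $G_i$, since the $4$-cliques of $\Gamma$ supported on the vertices of $\Gamma_i$ are exactly the $4$-cliques of $\Gamma_i$. Writing $\alpha = \sum_i \alpha_i$ with $\alpha_i \in H_4(G_i;\Z_2)$, the form splits as the orthogonal direct sum of the defining forms of the $G_i$ plus a zero block on the deleted edges, so its rank is maximized by maximizing each block independently: $m_2(G) = \sum_i m_2(G_i)$. Combining, $2b_2(G) - m_2(G) = \sum_i\bigl(2b_2(G_i) - m_2(G_i)\bigr) + 2r = \sum_i h(G_i) + 2r$ by the hypothesis on each $G_i$, and Proposition \ref{hnewlowerbound} yields $h(G) \geq \sum_i h(G_i) + 2r$.

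For the upper bound I construct a manifold realizing this value. By hypothesis each $G_i$ is realized by a closed oriented $4$-manifold $M_i$ with $b_2(M_i) = h(G_i)$; form the connected sum $M_0 = M_1 \#\cdots\# M_k$, which has $\pi_1(M_0) \cong G_1 * \cdots * G_k$ and $b_2(M_0) = \sum_i h(G_i)$. It remains to impose the $r$ commutation relations from the deleted edges. For a deleted edge $e = \{u,v\}$, represent the commutator $[u,v]$ by an embedded circle $\gamma$ (possible since circles embed in dimension $4$ and $[u,v]$ is null-homologous) and perform surgery on $\gamma$, replacing a tubular $S^1 \times D^3$ by $D^2 \times S^2$. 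Van Kampen shows this kills $\gamma$, adding the single relation $[u,v]=1$, and since the surgery replaces $S^1\times D^3$ by $D^2\times S^2$ it raises the Euler characteristic by exactly $2$; as $\gamma$ is null-homologous the first Betti number is unchanged, and Poincar\'e duality pins $b_0,b_1,b_3,b_4$, forcing $b_2$ to increase by exactly $2$. Carrying out this surgery once for each deleted edge produces a closed oriented $4$-manifold $M$ with $\pi_1(M) \cong (G_1 * \cdots * G_k)/\langle\langle [u_e,v_e] : e \text{ deleted}\rangle\rangle \cong A_\Gamma = G$ and $b_2(M) = \sum_i h(G_i) + 2r$; this is the kind of edge-addition move developed in Section \ref{tools}. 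Hence $h(G) \leq \sum_i h(G_i) + 2r$, and the two inequalities give the theorem.

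The step I expect to require the most care is the additivity $m_2(G) = \sum_i m_2(G_i)$: it is exactly here that the hypothesis that every remaining edge lies in a $4$-clique is used, both to guarantee that deleted edges contribute nothing to any cup product and that every $4$-clique is confined to a single $\Gamma_i$, so that the form genuinely block-diagonalizes. One must also check the compatibility of the cup product on $\Gamma_i$-edges computed in $G$ versus in $G_i$, which holds because $\Gamma_i$, although not in general an induced subgraph of $\Gamma$, shares all of its $4$-cliques with $\Gamma$. On the construction side the delicate point is verifying that the surgered fundamental group is exactly $G$ with no extra relations and that the $b_2$ accounting is exact; the Euler-characteristic computation makes the latter automatic.
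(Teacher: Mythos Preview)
Your proof is correct and follows the same two-step strategy as the paper: establish the lower bound by showing $m_2(G)=\sum_i m_2(G_i)$ and $b_2(G)=\sum_i b_2(G_i)+r$, then realize the upper bound by taking the connected sum of realizing manifolds for the $G_i$ and performing $r$ commutator surgeries. Your argument is in fact more careful than the paper's on the key point: you explicitly verify that every $4$-clique of $\Gamma$ survives intact into a single $\Gamma_i$ (so the form genuinely block-diagonalizes), and you note that although $\Gamma_i$ need not be an induced subgraph, its $4$-cliques coincide with those of $\Gamma$ supported on its vertex set, so the cup product is computed consistently.
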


The geography of 4-cliques in a graph is key to understanding the minimum $b_2$ problem for the associated \RAAG. Poincar\'e duality imposes restrictions between group theory and topology, which is strengthened in dimension 4. We observe this restriction for general finitely presented groups in Proposition \ref{hnewlowerbound}, in which we see that a portion of the intersection form of a 4-manifold contains the structure of the 2-dimensional cohomology of the fundamental group $G$. For a \RAAG, that cup product structure is completely understood in terms of the configuration of the 4-cliques in the associated graph.

%%%%%%%%%%%%%%%%%%%%%%%%%%%%%%%%%%%%%%%%%%%%%%%%%%%%%%%%%%%%%%%%%%%%%%%%%%%%%%%%%%%%%%%%%%%%%%%%%%%%%%%%%%%%%%%%%%%%
%%%%%%%%%%%%%%%%%%%%%%%%%%%%%%%%%%%%%%%%%%% ACKNOWLEDGEMENT %%%%%%%%%%%%%%%%%%%%%%%%%%%%%%%%%%%%%%%%%%%%%%%%%%%%%%%%
%%%%%%%%%%%%%%%%%%%%%%%%%%%%%%%%%%%%%%%%%%%%%%%%%%%%%%%%%%%%%%%%%%%%%%%%%%%%%%%%%%%%%%%%%%%%%%%%%%%%%%%%%%%%%%%%%%%%

\subsection*{Acknowledgement}
I would like to thank Danny Ruberman, my Ph.D. advisor, for the excellent help and guidance he has given me for this problem. The results in this paper are based on my dissertation research at Brandeis University. 

%%%%%%%%%%%%%%%%%%%%%%%%%%%%%%%%%%%%%%%%%%%%%%%%%%%%%%%%%%%%%%%%%%%%%%%%%%%%%%%%%%%%%%%%%%%%%%%%%%%%%%%%%%%%%%%%%%%%
%%%%%%%%%%%%%%%%%%%%%%%%%%%%%%%%% THE HAUSMANN WEINBERGER INVARIANT %%%%%%%%%%%%%%%%%%%%%%%%%%%%%%%%%%%%%%%%%%%%%%%%
%%%%%%%%%%%%%%%%%%%%%%%%%%%%%%%%%%%%%%%%%%%%%%%%%%%%%%%%%%%%%%%%%%%%%%%%%%%%%%%%%%%%%%%%%%%%%%%%%%%%%%%%%%%%%%%%%%%%
\section{The Hausmann-Weinberger invariant} \label{HW invariant}

%%%%%%%%%%%%%%%%%%%%%%%%%%%%%%%%%%%%%%%%%%%%%%%%%%%%%%%%%%%%%%%%%%%%%%%%%%%%%%%%%%%%%%%%%%%%%%%%%%%%%%%%%%%%%%%%%%%%
\subsection{Basic definitions}
In 1985, Hausmann and Weinberger defined the invariant $q(G)$ as the minimum Euler characteristic over all topological $M$ with fundamental group $G$. Advances have been made in studying $q$ for classes of groups including knot groups \cite{HausmannWeinberger85}, fundamental groups of aspherical manifolds \cite{KirkLivingston05, Kotschick94}, free groups, fundamental groups of closed oriented genus $g$ surfaces and 3-manifold groups \cite{Kotschick94}, and most recently finitely generated abelian and free abelian groups \cite{KirkLivingston05}. For the cases of infinite amenable groups \cite{Eckmann97} and groups with finite abelianization \cite{Luck94}, $L^2$-methods have been used to bound $q$ below by zero. 

For a 4-manifold $M$, the Euler characteristic $\chi(M)$ is given by the alternating sum of the ranks of homology (with rational coefficients). These ranks are commonly refered to as Betti numbers; we will denote the $i$th Betti number by $b_i(M)= \dim H_i(M;\Q)$. By Poincar\'e duality, $\chi(M) = 2 - 2b_1(M) + b_2(M)$. 

For a group $G$ we can similarly define $b_i(G) = \dim H_i(K(G,1); \Q)$, where $K(G,1)$ is an Eilenberg-Maclane space. If $G$ is a finitely presented group with a presentation $\mathscr P$ having $g$ generators and $r$ relations, define the \emph{deficiency} $d(\mathscr P)=g-r$. Then the \emph{deficiency} $d_G$ of $G$ is the maximum $d(\mathscr P)$ over all finite presentations $\mathscr P$ \cite{HausmannWeinberger85}.

A priori, we see that $q(G)$ takes integer values. We have lower and upper bounds on $q(G)$ which allow us to consider $q$ as the minimum rather than the infimum over all $\chi(M)$. 
\begin{theorem}[Hausmann-Weinberger, {\cite[Theorem 1]{HausmannWeinberger85}}]
\label{qbounds}
For a finitely presented group $G$, we have 
\begin{equation*}
2-2b_1(G) + b_2(G) \leq q(G) \leq 2(1-d_G).
\end{equation*}
\end{theorem}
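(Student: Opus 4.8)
The plan is to establish the two inequalities separately, since they come from completely different ideas: the lower bound is a homological consequence of the classifying map, while the upper bound is an explicit surgery construction from a presentation.

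For the lower bound, I would start with an arbitrary $M \in \mathcal M(G)$ and use Poincar\'e duality exactly as recorded above, writing $\chi(M) = 2 - 2b_1(M) + b_2(M)$. Since $\pi_1(M) \cong G$, the first homology with rational coefficients is the rationalized abelianization of $G$, so $b_1(M) = b_1(G)$, and the whole problem reduces to comparing $b_2(M)$ with $b_2(G)$. The key step is to produce the classifying map $c \colon M \to K(G,1)$ inducing the identity on $\pi_1$, and to observe that $K(G,1)$ may be built from $M$ by attaching cells only in dimensions $\geq 3$ (to kill $\pi_i$ for $i \geq 2$). Attaching cells of dimension $\geq 3$ leaves the cellular $2$-chains and $2$-cycles unchanged while enlarging the image of $\partial_3$, so $c_*\colon H_2(M;\Q) \to H_2(G;\Q)$ is surjective; hence $b_2(M) \geq b_2(G)$. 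Combining gives $\chi(M) \geq 2 - 2b_1(G) + b_2(G)$, and taking the minimum over $\mathcal M(G)$ yields the stated lower bound.

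For the upper bound, I would build an explicit $M$ realizing a presentation $\mathscr P$ of $G$ that attains the deficiency, so $d(\mathscr P) = g - r = d_G$ with $g$ generators and $r$ relations. Begin with $N = \#^{g}(S^1 \times S^3)$, which has $\pi_1(N) \cong F_g$ and, using $\chi(S^1\times S^3)=0$ together with the connected-sum formula $\chi(A\# B)=\chi(A)+\chi(B)-2$, satisfies $\chi(N) = 2 - 2g$. Each relation $R_j$ is an element of $\pi_1(N)$; represent it by an embedded circle (generic curves embed in a $4$-manifold and have trivial normal bundle, since an orientable $D^3$-bundle over $S^1$ is trivial), and perform surgery, excising $S^1\times D^3$ and regluing $D^2\times S^2$. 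By van Kampen this imposes the relation $R_j$, and a Mayer--Vietoris computation (both $S^1\times D^3$ and $S^1\times S^2$ have vanishing Euler characteristic while $D^2\times S^2$ contributes $2$) shows each surgery raises $\chi$ by exactly $2$. After all $r$ surgeries the resulting closed oriented $M$ has $\pi_1(M)\cong G$ and $\chi(M) = (2-2g) + 2r = 2(1 - d_G)$, giving $q(G) \leq 2(1-d_G)$.

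I expect the conceptual heart, and the step most in need of justification, to be the surjectivity $c_*\colon H_2(M;\Q)\twoheadrightarrow H_2(G;\Q)$ in the lower bound: everything hinges on the fact that the higher cells needed to convert $M$ into a $K(G,1)$ live in dimension $\geq 3$ and therefore can only kill rather than create rational two-dimensional homology. The upper bound is more routine but requires care in two places: verifying that the loops representing the relators can be taken disjointly and embedded with trivial normal bundles so the surgeries are well defined and keep $M$ closed and oriented, and checking the Euler characteristic bookkeeping through the connected sums and surgeries.
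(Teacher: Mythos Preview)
Your proof is correct and the lower bound argument is essentially identical to the paper's: both use the classifying map $M \to K(G,1)$ and the surjectivity of $H_2(M)\to H_2(G)$, with the paper citing the Hopf exact sequence $\pi_2(M)\to H_2(M)\to H_2(\pi_1 M)\to 0$ while you unpack it via the cell-attachment description of $K(G,1)$---these are the same fact.

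For the upper bound the paper takes a different but equally standard route: rather than surgering $\#^g(S^1\times S^3)$ along the relators, it builds a $4$-dimensional handlebody $X$ with one $0$-handle, $g$ $1$-handles, and $r$ $2$-handles realizing the presentation, and then takes the double $DX$, which is closed and orientable with $\pi_1(DX)\cong G$ and $\chi(DX)=2-2g+2r$. Your surgery construction has the mild advantage that the Euler-characteristic bookkeeping is transparent and that one sees directly how each relation costs $+2$; the doubling construction is quicker to state and sidesteps the discussion of embedding the relator loops and trivializing their normal bundles. Either argument suffices, and in fact the two constructions are closely related (the double of the $1$-handlebody is exactly $\#^g(S^1\times S^3)$).
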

\begin{proof}
Let $G$ be a finitely presented group with $g$ generators and $r$ relations such that $d_G= g-r$. Let $M \in \mathcal M(G)$ and $f: M \to K(G,1)$ be a map inducing an isomorphism on fundamental groups. The induced map on homology $f_*:H_i(M) \to H_i(G)$ is an isomorphism for $i=1$ and a surjection for $i=2$. The surjection in dimension 2 can be seen by considering the Hopf exact sequence, $\pi_2(M) \to H_2(M) \to H_2(\pi_1(M)) \to 0$. Thus $b_1(M) = b_1(G)$ and $b_2(M) \geq b_2(G)$. To see the upper bound, consider the following construction of a 4-manifold in $\mathcal M$: Build a handlebody $X$ consisting of one 0-handle, $g$ 1-handles, and $r$ 2-handles (attached to reflect each of the relations), and double it. The result is a closed orientable 4-manifold $M$ with $\pi_1(M) \cong G$ and $\chi(M) = 2-2g+2r = 2(1-d_G)$. 
\end{proof}

Since $b_1M = b_1G$, determining $q(G)$ comes down to refining the bounds on possible values of $b_2M$. Kirk and Livingston investigated the $q$ invariant for finitely generated abelian and free abelian groups in \cite{KirkLivingston05} and introduced an invariant equivalent to $q$:

\begin{defn}[{Kirk-Livingston, \cite{KirkLivingston05}}] 
For a finitely presented group $G$, define 
\begin{equation*}
h(G) = \min\{ b_2(M) | M \in \mathcal M(G) \}. 
\end{equation*}
\end{defn}
As mentioned in the introduction, we will refer to the problem of determining $h(G)$ for a group $G$ as the \emph{minimum $b_2$ problem for $G$}. By definition $q(G) = 2-2b_1G+h(G)$, so solving the minimum $b_2$ problem for $G$ is equivalent to finding $q(G)$. The following corollary then follows from Theorem \ref{qbounds}:
\begin{corollary}
\label{hbounds}
For a finitely presented group $G$ with $r$ relations, 
\begin{equation*}
b_2(G) \leq h(G) \leq 2r. 
\end{equation*}
\end{corollary}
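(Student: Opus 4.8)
The plan is to read off Corollary~\ref{hbounds} directly from Theorem~\ref{qbounds} using the relation $q(G) = 2 - 2b_1(G) + h(G)$, which holds because $b_1(M) = b_1(G)$ for every $M \in \mathcal M(G)$ and $h(G)$ is by definition the minimum of $b_2(M)$. Since $\chi(M) = 2 - 2b_1(M) + b_2(M)$ by Poincar\'e duality, minimizing $\chi(M)$ over $\mathcal M(G)$ is the same as minimizing $b_2(M)$, so $q(G) = 2 - 2b_1(G) + h(G)$.

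Concretely, I would substitute this identity into the two inequalities of Theorem~\ref{qbounds}. From $2 - 2b_1(G) + b_2(G) \leq q(G) = 2 - 2b_1(G) + h(G)$, cancelling the common term $2 - 2b_1(G)$ from both sides yields the lower bound $b_2(G) \leq h(G)$. For the upper bound, I would first note that the deficiency satisfies $d_G \geq g - r$ for the given presentation with $g$ generators and $r$ relations; since $d_G$ is the maximum of $g - r$ over all presentations, we have $1 - d_G \leq 1 - (g - r)$. Plugging $q(G) = 2 - 2b_1(G) + h(G) \leq 2(1 - d_G)$ and solving for $h(G)$ gives $h(G) \leq 2b_1(G) - 2d_G$. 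I would then relate this to $2r$ via $b_1(G) \leq g$ and the identity so that the bound collapses to $h(G) \leq 2r$; the cleanest route is to work directly from the doubling construction in the proof of Theorem~\ref{qbounds} rather than through $d_G$.

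In fact the most transparent approach for the upper bound is to examine the explicit manifold built in the proof of Theorem~\ref{qbounds}: the double of a handlebody $X$ with one $0$-handle, $g$ $1$-handles, and $r$ $2$-handles, realized for a presentation with $g - r = d_G$. For this $M$ we have $\chi(M) = 2(1 - d_G)$ and $b_1(M) = b_1(G)$, so $b_2(M) = \chi(M) - 2 + 2b_1(M) = 2b_1(G) - 2d_G$, giving $h(G) \leq 2b_1(G) - 2d_G$. To reach the stated bound $h(G) \leq 2r$ for a presentation with $r$ relations, I would instead take an arbitrary presentation with $g$ generators and $r$ relations (not necessarily deficiency-maximizing) and run the same doubling construction; here $b_2$ of the double is at most $2r$ because $b_1(G) \leq g$, and a direct handle count shows the $2$-handles contribute at most $2r$ to $H_2$.

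The main obstacle is bookkeeping rather than conceptual: I must track carefully whether the upper bound in Theorem~\ref{qbounds} is phrased in terms of $d_G$ (the optimized deficiency) while the corollary is phrased in terms of $r$ (relations of a fixed presentation), and ensure the handle-count for the double gives $b_2 \leq 2r$ without over- or under-counting the homology contributed by the $1$- and $2$-handles under the doubling. Verifying that $b_1(G) \le g$ and that the doubling does not inflate $b_2$ beyond $2r$ is the one place where a genuine (though routine) computation is needed; everything else is algebraic substitution.
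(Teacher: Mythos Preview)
Your proposal is correct and follows precisely the route the paper intends: the corollary is stated in the paper as an immediate consequence of Theorem~\ref{qbounds} with no separate proof, and your substitution $q(G)=2-2b_1(G)+h(G)$ together with the doubling construction for an arbitrary presentation (using $b_1(G)\le g$ to get $b_2(M)=2b_1(G)-2g+2r\le 2r$) is exactly how one unpacks that citation. Your meandering through $d_G$ before settling on the direct handlebody computation is unnecessary but harmless; the final argument you land on is the right one.
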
 

The basic technique to solving the minimum $b_2$ problem is to increase the lower bound on $h(G)$, if possible, and then construct a suitable 4-manifold $M$ with $b_2(M)$ equal to the lower bound, thus yielding an equality. We call such a 4-manifold $M \in \mathcal M(G)$ with $b_2(M) = h(G)$ a \emph{realizing manifold for $h(G)$}.

\begin{example}
For a free group $F_n$, $h(F_n) = 0$: 
Let $M$ be an arbitrary 4-manifold in $\mathcal M(F_n)$. We know from Theorem \ref{qbounds} that $b_2(F_n)\leq b_2(M)$. A bouquet of $n$ circles is a $K(F_n,1)$ complex in which $b_2(F_n)=0$. Thus, $h(F_n)$ is bounded below by zero. One 4-manifold realizing this lower bound is the connected sum of $n$ copies of $S^1 \times S^3$. Since $\pi_1(\#n S^1 \times S^3) \cong F_n$ and $b_2(\#n S^1 \times S^3)=0$, $h(F_n)=0$. 
\end{example}

\begin{example} \label{freeabelian}
The solution for free abelian groups, a special case of \RAAGs, is given in the theorem below: \end{example}
\begin{theorem}[{Kirk-Livingston, \cite[Theorem 1]{KirkLivingston05}}]
For a free abelian group $\Z^n$, $h(\Z^n) = \binom{n}{2} + \epsilon_n$ for all $n$, with the exception of $h(\Z^3)=6$ and $h(\Z^5)=14$ .
Here $\epsilon_n$ is an auxiliary function defined to be 0 if $\binom{n}{2}$ is even and 1 otherwise. 
\end{theorem}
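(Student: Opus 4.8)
The plan is to pin down $h(\Z^n)$ by sandwiching it between the lower bound of Proposition \ref{hnewlowerbound} and an explicit construction, isolating the anomalies at $n=3,5$ as failures of the cup-product form to be maximally nondegenerate. Since $K(\Z^n,1)=T^n$, the mod-$2$ cohomology ring is the exterior algebra $H^*(\Z^n;\Z_2)\cong\Lambda^*(\Z_2^n)$, so $H^2$ has dimension $\binom{n}{2}$ with basis the classes $e_i\cup e_j$, and $H^4$ has dimension $\binom{n}{4}$. The first observation is that, in an exterior algebra over $\Z_2$, every $a\in H^2(\Z^n;\Z_2)$ satisfies $a\cup a=0$ (the coefficient of each quadratic term is even); hence for every $\alpha\in H_4(\Z^n;\Z_2)$ the form of \eqref{formmod2} is alternating, and an alternating form over $\Z_2$ has even rank. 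Combined with the trivial bound $m_2\le\dim H^2=\binom{n}{2}$, this already yields $m_2(\Z^n)\le\binom{n}{2}-\epsilon_n$ and therefore the universal lower bound $h(\Z^n)\ge 2\binom{n}{2}-(\binom{n}{2}-\epsilon_n)=\binom{n}{2}+\epsilon_n$.

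Next I would show this bound is sharp for $n\ne 3,5$ by producing a class $\alpha$ realizing the maximal even rank. Writing $\alpha$ in the basis of $H_4$ dual to the $4$-subsets $S\subset\{1,\dots,n\}$, the matrix of the form in the $e_i\cup e_j$ basis has $(\{a,b\},\{c,d\})$-entry equal to the coefficient of $\alpha$ on $S=\{a,b,c,d\}$ when these four indices are distinct and $0$ otherwise; choosing which $4$-subsets to switch on is thus a combinatorial design problem whose goal is to make this sparse alternating matrix nonsingular (or of corank $1$ when $\binom{n}{2}$ is odd). I expect this to be one of the main obstacles: one must exhibit, for each $n\ge 6$ together with the easy cases $n=2,4$, a family of $4$-subsets whose incidence form has full even rank, and do so uniformly in $n$.

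The two exceptions then fall out of the same analysis. For $n=3$ there are no $4$-subsets, so $H_4(T^3;\Z_2)=0$, the form vanishes identically, $m_2(\Z^3)=0$, and the bound improves to $h(\Z^3)\ge 2\binom{3}{2}=6$. For $n=5$ one identifies the form, with all $4$-subsets switched on, with the disjointness (Petersen/Kneser) incidence on the ten $2$-subsets of a $5$-set; a direct row reduction shows its $\Z_2$-rank is $6$, and the remaining step is to verify that no other choice of $\alpha$ beats this, so that $m_2(\Z^5)=6$ and $h(\Z^5)\ge 14$. Confirming that $6$ is genuinely the maximum over all $\alpha$ is the delicate low-dimensional point that makes $n=5$ special.

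Finally I would realize each lower bound by a $4$-manifold. Starting from the standard presentation of $\Z^n$ with $n$ generators and the $\binom{n}{2}$ commutator relations, the doubled handlebody lies in $\mathcal M(\Z^n)$ and has $b_2=2\binom{n}{2}$, matching the upper bound of Corollary \ref{hbounds}. To descend to the lower bound I would perform surgeries pairing up and cancelling $2$-dimensional homology in the pattern dictated by the optimal class $\alpha$ found above, removing exactly $m_2(\Z^n)$ from $b_2$ while preserving $\pi_1\cong\Z^n$; the base cases $T^2\times S^2$ for $n=2$ and the aspherical $T^4$ for $n=4$ exhibit the mechanism concretely, while $n=3,5$ need their own larger models. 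The genuinely hard geometric content lives here: arranging the surgeries so that the fundamental group is unchanged and $b_2$ lands exactly on the predicted value rather than merely somewhere below $2\binom{n}{2}$.
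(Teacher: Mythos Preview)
The paper does not actually prove this theorem; it quotes it from Kirk--Livingston and explicitly directs the reader to \cite{KirkLivingston05} for the full argument, noting only that Proposition~\ref{evenform} explains the parity correction $\epsilon_n$. So there is no proof in the paper to compare your proposal against.

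That said, your plan recasts the Kirk--Livingston result through this paper's $m_2$ machinery, which is a legitimate and interesting reformulation: the statement that $h(\Z^n)=\binom{n}{2}+\epsilon_n$ for $n\neq 3,5$ is exactly the statement that $m_2(\Z^n)=\binom{n}{2}-\epsilon_n$ for those $n$, together with cohomological minimality. Your identification of the exceptions is correct in spirit---$H_4(\Z^3;\Z_2)=0$ forces $m_2(\Z^3)=0$, and the Petersen-type incidence for $n=5$ really does have deficient rank over $\Z_2$---and this matches the paper's Conjecture~\ref{Conjecture}.

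The genuine gaps you flag are real and are precisely the content of \cite{KirkLivingston05}. First, you have not shown that $m_2(\Z^n)$ attains $\binom{n}{2}-\epsilon_n$ for all $n\geq 6$; this requires exhibiting a specific $\alpha$ and is not automatic. Second, and more seriously, your construction step is too vague: ``perform surgeries pairing up and cancelling $2$-dimensional homology in the pattern dictated by $\alpha$'' is not a procedure. The Kirk--Livingston constructions (summarized in this paper in Section~\ref{tools}) build realizing manifolds from connected sums of $4$-tori and products $T^2\times\Sigma_g$, using $4$-reductions and identification surgeries in carefully chosen combinatorial patterns---not by starting from the doubled handlebody and surgering down. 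That descent is where the actual work lives, and your proposal does not supply it.
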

\noindent When $b_2(\Z^n)=\binom{n}{2}$ is odd, the lower bound on $h$ is increased by 1. This argument is explained later by Proposition \ref{evenform}. The full details of the proof, including the 4-manifold constructions, can be found in \cite{KirkLivingston05}. \\

In the free abelian case, the constructions for realizing manifolds are built from products of surfaces that are surgered to identify generators or kill commutators. We shall see that manifolds realizing general \RAAGs\ can be constructed in a similar way. 

%%%%%%%%%%%%%%%%%%%%%%%%%%%%%%%%%%%%%%%%%%%%%%%%%%%%%%%%%%%%%%%%%%%%%%%%%%%%%%%%%%%%%%%%%%%%%%%%%%%%%%%%%%%%%%%%%%%%%%%%%%%%%%%%%%%%%%%%%% THE COHOMOLOGICAL OBSTRUCTION TO SOLVING THE MINIMUM $b_2$ PROBLEM %%%%%%%%%%%%%%%%%%%%%%%%%%
%%%%%%%%%%%%%%%%%%%%%%%%%%%%%%%%%%%%%%%%%%%%%%%%%%%%%%%%%%%%%%%%%%%%%%%%%%%%%%%%%%%%%%%%%%%%%%%%%%%%%%%%%%%%%%%%%%%%
\section[Cohomological obstruction]{The cohomological obstruction to solving the minimum $b_2$ problem} \label{CohomologicalObstruction}

%%%%%%%%%%%%%%%%%%%%%%%%%%%%%%%%%%%%%%%%%%%%%%%%%%%%%%%%%%%%%%%%%%%%%%%%%%%%%%%%%%%%%%%%%%%%%%%%%%%%%%%%%%%%%%%%%%%%
\subsection{Finding a better lower bound for $h$}\label{better lower bound}
Theorem \ref{qbounds} asserts that for any finitely presented group $G$, $b_2(G)\leq h(G)$. We will refer to $b_2(G)$ as the \emph{trivial lower bound} on $h(G)$. In many cases we can use the cohomological structure of $G$ to yield a better lower bound for $h(G)$. 

Let $f: M\to K(G,1)$ be a map that induces an isomorphism on fundamental groups, and let $f^*: H^i(G) \to H^i(M)$ be the induced map on cohomology. In the proof of Theorem \ref{qbounds} it is shown that the induced homological map $f_*:H_i(M) \to H_i(G)$ is an isomorphism for $i=1$ and a surjection for $i=2$. By the Universal Coefficient Theorem, $f^*:H^i(G) \to H^i(M)$ is an isomorphism for $i=1$ and an injection for $i=2$.  Denote by $I(M,f)$ the image $f^*(H^2(G))$ in $H^2(M)$ modulo torsion.

Consider the symmetric, bilinear pairing
\begin{equation} \label{form}
H^2(G) \times H^2(G) \to \Z \textrm{ by  } (a,b)\mapsto  (a\cup b) \cap \alpha
\end{equation}
for a homology class $\alpha\in H_4(G)$. If $\alpha = f_*([M])$, this form completely determines the restriction of the intersection form of $M$,
\begin{equation*}
H^2(M)/\torsion \times H^2(M)/\torsion \to \Z \textrm{ by } (x,y) \mapsto (x\cup y) \cap [M],
\end{equation*}
to $I(M,f)$ since $(f^*(a) \cup f^*(b)) \cap [M] = (a \cup b) \cap \alpha$. 

Given any group $G$ and homology class $\alpha \in H_4(G)$, there exists $M\in \mathcal M(G)$ and a continuous map $f:M\to K(G,1)$ so that $f_*([M])=\alpha$ \cite{BaldridgeKirk09}. Additionally, the rank of $I(M,f)$ is $b_2(G)$. These two observations allow us to make certain assumptions about the possible values of $h(G)$ independent of the 4-manifold $M$ or the classifying map $f:M\to K(G,1)$.

We introduce the following definition which is useful for improving the trivial lower bound on $h(G)$ for any finitely presented group $G$.

\begin{defn}
For a finitely presented group $G$, define $m(G)$ to be the maximum rank of a matrix associated to (\ref{form}) over all possible choices of $\alpha \in H_4(G)$. 
\end{defn}

Note that a priori, $0 \leq m(G) \leq b_2(G)$. If $m(G)$ is strictly less than $b_2(G)$, then $I(M,f)$ is represented by a singular matrix, which indicates the lower bound on $h(G)$ must be greater than $b_2(G)$, the dimension of $I(M,f)$. Unfortunately, computing $m(G)$ is impractical; in all nontrivial cases, there are infinitely many choices of $\alpha \in H_4(G;\Z)$. However,  $H_4(G;\Z_p)$ can be finite. If $p$ is prime, the intersection form of a 4-manifold $M$ with $\Z_p$ coefficients is also nonsingular. Thus we can calculate $m_p(G)$ instead, a mod $p$ version of $m(G)$. 

\begin{defn} \label{m2}
Define $m_p(G)$ to be the maximum rank of the symmetric bilinear form 
\begin{equation}
H^2(G;\Z_p) \times H^2(G;\Z_p) \to \Z_p, (a,b) \mapsto (a\cup b) \cap \alpha
\label{formmodp}
\end{equation}
over all possibe choices of $\alpha \in H_4(G;\Z_p)$. 
\end{defn}

In practice, for \RAAGs\ we need only consider the bilinear form on $H^2(G;\Z_p)$ for $p=2$; we only use $m_2(G)$, the invariant mentioned in the introduction. We now prove Proposition \ref{hnewlowerbound} (which holds for all prime $p$ although it is stated in the introduction for $p=2$). 
 
\begin{proof}[Proof of Proposition \ref{hnewlowerbound}]
Let $G$ be a finitely presented group, and let $X$ be a $K(G,1)$ space. Then $H_1(X)$ and $H_2(X)$ are finitely presented and $b_2(G) = \dim H_2(X;\Q) = \dim H^2(X;\Q)$, as we identify $H_*(G)$ with $H_*(X)$ and $H^*(G)$ with $H^*(X)$. Let $\tilde\alpha$ be the homology class that maximizes the rank of the form (\ref{formmodp}) over all $\alpha\in H_4(G; \Z_p)$. Consequently, $\tilde\alpha$ minimizes the radical of (\ref{formmodp}). Recall that for a symmetric bilinear form, the radical contains linear independent vectors $x_i$ such that $\langle x_i \cup y,\alpha\rangle=0$ for all $y\in H^2(G;\Z_p)$ and a choice of $\alpha \in H_4(G;\Z_p)$. Since the dimension of the form is $b_2(G)$, the minimum dimension of the radical is $b_2(G)-m_p(G)$ by the Rank-Nullity Theorem. In order for the intersection form on a manifold $M\in \mathcal M(G)$ to be nondegenerate, its rank must be at least $b_2(G) + (b_2(G) - m_p(G))$. Thus $2b_2(G) - m_p(G) \leq h(G)$. 
\end{proof}

%%%%%%%%%%%%%%%%%%%%%%%%%%%%%%%%%%%%%%%%%%%%%%%%%%%%%%%%%%%%%%%%%%%%%%%%%%%%%%%%%%%%%%%%%%%%%%%%%%%%%%%%%%%%%%%%%%%%
%%%%%%%%%%%%%%%%%%%%%%%%%%%%%%%%%% RIGHT-ANGLED ARTIN GROUPS %%%%%%%%%%%%%%%%%%%%%%%%%%%%%%%%%%%%%%%%%%%%%%%%%%%%%%%
%%%%%%%%%%%%%%%%%%%%%%%%%%%%%%%%%%%%%%%%%%%%%%%%%%%%%%%%%%%%%%%%%%%%%%%%%%%%%%%%%%%%%%%%%%%%%%%%%%%%%%%%%%%%%%%%%%%%
\section{Right-angled Artin groups} \label{raags}

We now restrict our discussion of the minimum $b_2$ problem to \RAAGs. The common construction of a \emph{Salvetti complex} is a compact $K(G,1)$ space and is used in the computation of the group cohomology of \RAAGs\ in \cite{CharneyDavis95}.

\begin{theorem}[{Charney-Davis, \cite[Theorem 3.2.4]{CharneyDavis95}}]
Suppose that $G$ is a \RAAG\ with generators $s_1,\dotsc, s_n$. Let $\Lambda[y_1,\dotsc,y_n]$ be the exterior algebra over $\Z$ on the variables $y_1,\dotsc,y_n$. Let $I$ be the ideal generated by all products $y_iy_j$ such that $s_i$ and $s_j$ do not commute in $G$. Then $H^*(G) \cong \Lambda[y_1,\dotsc,y_n]/I$. 
\end{theorem}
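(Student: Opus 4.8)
The plan is to realize $G$ by its Salvetti complex $X$, which the excerpt already provides as a compact $K(G,1)$, and which sits as a subcomplex of the standard $n$-torus $T^n = (S^1)^n$; I would then read off both the additive and multiplicative structure of $H^*(X)$ directly from the cell structure. Give $S^1$ its minimal CW structure (one $0$-cell, one $1$-cell), so that $T^n$ has one cell $e_\sigma$ of dimension $|\sigma|$ for each subset $\sigma \subseteq \{1,\dots,n\}$. The Salvetti complex $X$ is the subcomplex consisting of those $e_\sigma$ for which $\sigma$ is a \emph{clique}, i.e.\ the generators $\{s_i : i \in \sigma\}$ pairwise commute. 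Since we are told $X$ is aspherical with $\pi_1(X)\cong G$, we have $H^*(G) \cong H^*(X)$, and I would take this asphericity as known rather than reprove it.

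The additive structure is immediate. In $S^1$ the cellular differential vanishes, and because the cell structure on $T^n$ is the product structure, the cellular chain complex of $T^n$ is the $n$-fold tensor product of that of $S^1$ and hence also has zero differentials. The differentials of $X$ are restrictions of those of $T^n$, so they vanish as well. Therefore $H^k(X)$ is free abelian with a basis of cochains dual to the $k$-cells, i.e.\ indexed by the $k$-cliques of the graph.

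Next I would pin down the ring structure using naturality of cup product under the inclusion $\iota : X \hookrightarrow T^n$. Writing $y_1,\dots,y_n \in H^1(T^n)$ for the standard generators, set $\bar y_i = \iota^*(y_i) \in H^1(X)$; since $\iota$ contains every $0$- and $1$-cell, $\iota^*$ is an isomorphism on $H^1$. In $H^*(T^n) = \Lambda[y_1,\dots,y_n]$ the product $y_{i_1}\cdots y_{i_k}$ (for $i_1 < \cdots < i_k$) is the generator dual to the cell $e_{\{i_1,\dots,i_k\}}$, and $\iota^*$ is the restriction-of-cochains map; hence $\bar y_{i_1}\cup \cdots \cup \bar y_{i_k} = \iota^*(y_{i_1}\cdots y_{i_k})$ is the dual of $e_{\{i_1,\dots,i_k\}}$ in $X$ when that set is a clique, and is $0$ otherwise (the cell being absent from $X$). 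In particular $\bar y_i \cup \bar y_j = 0$ exactly when $s_i$ and $s_j$ do not commute, and the clique products recover the clique-indexed basis found in the previous step. This shows both that the $\bar y_i$ generate $H^*(X)$ as a ring and that the ideal $I$ lies in the kernel of the ring surjection $\Lambda[y_1,\dots,y_n] \twoheadrightarrow H^*(X)$, $y_i \mapsto \bar y_i$, so it factors through $\Lambda[y_1,\dots,y_n]/I$.

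Finally I would conclude by a rank count. The monomials $y_{i_1}\cdots y_{i_k}$ whose support is \emph{not} a clique span $I$ (any such monomial contains a non-edge $\{i,j\}$ and so is a multiple of $y_iy_j$), so $\Lambda[y_1,\dots,y_n]/I$ is free abelian with basis the clique-supported monomials, matching the clique-indexed basis of $H^*(X)$ degree by degree. The induced map $\Lambda[y_1,\dots,y_n]/I \to H^*(X)$ is therefore a surjection between free abelian groups of equal finite rank in each degree, hence an isomorphism. The step I expect to demand the most care is the cup-product computation: one must justify that the non-clique products genuinely vanish and that the clique products remain a basis, which is precisely where the naturality argument and the identification of $\iota^*$ with restriction of cellular cochains carry the weight.
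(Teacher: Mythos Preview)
Your proposal is correct, and it is in fact a fuller version of what the paper offers: the paper does not prove this theorem itself but cites it from Charney--Davis, adding only the one-line remark that ``the chain complex of a Salvetti complex injects into that of a torus, where all chain (and cochain) maps are trivial.'' Your argument unpacks exactly that remark---the inclusion $\iota:X\hookrightarrow T^n$, the vanishing cellular differentials, and naturality of the cup product---so your approach is precisely the one the paper gestures at.
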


Nontrivial cup products in the cohomology ring of a \RAAG\ come from the commuting generators, as in the case of a torus. This is because the chain complex of a Salvetti complex injects into that of a torus, where all chain (and cochain) maps are trivial. 

For a graph $\Gamma$ associated to a \RAAG\ $G$, we can recognize generators of $H^*(G)$ straight from the graph $\Gamma$: vertices represent generators of $H^1(G)$, edges represent generators of $H^2(G)$, and triangles represent generators of $H^3(G)$. In general, $k$-cliques, or complete subgraphs of order $k$, represent generators of $H^k(G)$.  

\begin{example} Let $\Gamma$ be the graph in Figure \ref{fivefourthree}, representing a \RAAG, $G$. We can think of the vertices $\{s_1,\dotsc, s_6\}$ as representing generators $\{z_1,\dotsc, z_6\}$ of $H^1(G)$. To simplify notation, let $z_i\cup z_j$ be denoted by $z_{ij}$. Thus $z_{12}$, $z_{13}$, $z_{14}$, $z_{15}$, $z_{23}$, $z_{24}$, $z_{25}$, $z_{34}$, $z_{35}$, $z_{36}$, $z_{45}$, $z_{46}$, and $z_{56}$ represent generators of $H^2(G)$. $H^3(G)$ is generated by $z_{123}$, $z_{124}$, $z_{125}$, $z_{134}$, $z_{135}$, $z_{145}$, $z_{234}$, $z_{235}$, $z_{245}$, $z_{345}$, $z_{346}$, $z_{356}$, and $z_{456}$, and $H^4(G)$ is generated by $z_{1234}$, $z_{1235}$, $z_{1245}$, $z_{1345}$, $z_{2345}$, and $z_{3456}$. Lastly, $z_{12345}$ generates the top dimensional cohomology class in $H^5(G)$. 
\end{example}

\vskip -.3 in
\begin{figure}[H]
\begin{center}
\begin{tikzpicture}[scale=2]
\tikzstyle{every node}=[draw, shape=circle, inner sep=.5mm, minimum size=.7cm, line width=1pt];
\path (2*72:1cm) node (s1) {$s_1$};
\path (3*72:1cm) node (s2) {$s_2$};
\path (4*72:1cm) node (s3) {$s_3$};
\path (5*72:1cm) node (s4) {$s_4$};
\path (6*72:1cm) node (s5) {$s_5$};
\path (1.9,0) node (s6) {$s_6$};
\foreach \i in {1,...,5} {\foreach \j in {\i,...,5} {\draw[line width = 2pt] (s\i) -- (s\j); } }
\foreach \i in {3,4,5} {\draw[line width = 2pt] (s\i) -- (s6); } 
\end{tikzpicture}
\caption{A graph of a 5-clique attached to a 4-clique along a face}
\label{fivefourthree}
\end{center}
\end{figure}
\vskip -.3in
With this calculation of the cohomology ring, we can prove Theorem \ref{trivialH4}.

\begin{proof}[Proof of Theorem \ref{trivialH4}]
Let $G$ be a \RAAG\ with $g$ generators and $r$ relations. Then $b_1(G)=g$ and $b_2(G)=r$. Let $M$ be any 4-manifold in $\mathcal M(G)$ and let $f:M \to K(G,1)$ be a map inducing an isomorphism on fundamental groups. If $H^4(G)=0$, then the image $I(M,f)$ of any basis of $H^2(G)$ can be represented by a zero matrix of dimension $b_2(G)$. Clearly, $m_2(G)=0$. Thus $2b_2(G) \leq h(G)$ by Proposition \ref{hnewlowerbound}. By Corollary \ref{hbounds}, $h(G) \leq 2r = 2b_2(G)$. 
\end{proof}

For other \RAAGs, since $H_4(G)$ is finitely generated, a computer program can calculate $m_2(G)$. Appendix \ref{sage code} contains the code for a Sage function \verb|max_rank()| that will compute $m_2(G)$ given an adjacency matrix of a graph associated to $G$.

\begin{numbered example} \label{radical example with matrix}
Let $G$ be the group given by the following graph $\Gamma$ 
\vskip .05 in
\begin{center}
\begin{minipage}{.2\linewidth}
\centering
\begin{tikzpicture}[scale = 2]
\tikzstyle{every node}=[draw, shape=circle, inner sep=.5mm, minimum size=.7cm, line width=1pt];
\path (0,0) node (s1) {$s_1$};
\path (0.5,1) node (s2) {$s_2$};
\path (1,0) node (s3) {$s_3$};
\path (1.5,1) node (s4) {$s_4$};
\path (2,0) node (s5) {$s_5$};
\draw[line width=2pt] (s1)--(s2)--(s3)--(s4)--(s5)--(s3)--(s1)--(s4)--(s2)--(s5);
\end{tikzpicture}
\end{minipage}
\begin{minipage}{.59\linewidth}
\centering
with adjacency matrix $\left(\begin{array}{cccccc}
0 & 1 & 1 & 1  & 0 \\
1 & 0 & 1 & 1 & 1 \\
1 & 1 & 0  & 1 & 1 \\
1 & 1 & 1 & 0  & 1 \\
0 & 1 & 1 & 1 & 0  \\
\end{array}\right).$
\end{minipage}
\end{center} 
\vskip .05 in
\noindent The set of vertices $\{s_1, \dotsc, s_5\}$ give an ordered basis for $H_1(G)$. Consider the following matrix represented by the form (\ref{formmod2}) under the ordered basis  $\{s_{12},s_{13},s_{14},s_{23},s_{24},s_{25},s_{34},s_{35},s_{45}\}$: 

\begin{equation*}
\left(\begin{array}{ccccccccc}
0 & 0 & 0 & 0 & 0 & 0 & a_1 & 0 & 0 \\
0 & 0 & 0 & 0 & a_1 & 0 & 0 & 0 & 0 \\
0 & 0 & 0 & a_1 & 0 & 0 & 0 & 0 & 0 \\
0 & 0 & a_1 & 0 & 0 & 0 & 0 & 0 & a_2 \\
0 & a_1 & 0 & 0 & 0 & 0 & 0 & a_2 & 0 \\
0 & 0 & 0 & 0 & 0 & 0 & a_2 & 0 & 0 \\
a_1 & 0 & 0 & 0 & 0 & a_2 & 0 & 0 & 0 \\
0 & 0 & 0 & 0 & a_2 & 0 & 0 & 0 & 0 \\
0 & 0 & 0 & a_2 & 0 & 0 & 0 & 0 & 0 \\
\end{array}\right)
\end{equation*}
The nonzero elements of this matrix are variables $a_1$ and $a_2$ representing the two generators $s_{1234}$ and $s_{2345}$ of $H^4(G;\Z_2)$. We compute $m_2(G)$ by finding all $2^{b_4(G)}$ ranks of the form and taking the maximum. Each rank is computed by replacing the $a_i$ in the above matrix with ones or zeros, each entry representing $\langle a_i,\alpha\rangle$. The Sage function \verb|max_rank()| from Appendix \ref{sage code} calculates $m_2(G)=6$ in this way. This implies that the minimum dimension of the radical is $b_2(G)-m_2(G) =  9-6 = 3$. However, since $b_4(G)$ is not too large, we can compute $m_2(G)$ easily by computing the minimum dimension of the radical by hand.

There are three nonzero choices in $H_4(G; \Z_2)$ for $\alpha$: $\alpha_1$, $\alpha_2$, and $\alpha_1+\alpha_2$, where $\langle a_i,\alpha_j\rangle = \delta_{ij}$. Note that if $\alpha=0$, the rank of the matrix is zero and the nullity is $b_2(G)=9$. If $\alpha=\alpha_1$, then $\langle a_1,\alpha\rangle =1$ and $\langle a_2,\alpha\rangle = 0$. In replacing $a_1$ with 1 and $a_2$ with 0, we see that this matrix has nullity 3. Similarly, if $\alpha=\alpha_2$, then we replace $a_1$ with 0 and $a_2$ with 1 and the matrix again has nullity 3. If $\alpha=\alpha_1+\alpha_2$, then we replace both $a_1$ and $a_2$ with 1. Three rows of the matrix (namely the fourth, fifth, and seventh) have two nonzero elements. Because we are computing the nullity of the matrix over $\Z_2$, the three linearly independent vectors 
\begin{equation*}
[0\ 0\ 1\ 0\ 0\ 0\ 0\ 0\ 1]^t, \; [0\ 1\ 0\ 0\ 0\ 0\ 0\ 1\ 0]^t, \; [1\ 0\ 0\ 0\ 0\ 1\ 0\ 0\ 0]^t
\end{equation*}
\noindent are in the kernel of the matrix. Since the matrix represents the form (\ref{formmod2}), the dimension of the radical for $\alpha=\alpha_1+\alpha_2$ is 3. Thus the minimum dimension of the radical is 3. Equivalently, the maximum rank is 6.
\end{numbered example}

The following proposition allows us to increase the trivial lower bound on $h(G)$ by 1 in the case when $b_2(G)$ is odd. 

\begin{prop}
\label{evenform}
If $G$ is a \RAAG, $m_2(G)$ is even. Thus if $b_2(G)$ is odd, $b_2(G)+1 \leq h(G)$. 
\end{prop}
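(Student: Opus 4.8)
The plan is to show that the symmetric bilinear form~(\ref{formmod2}) is always \emph{alternating} (i.e.\ $\langle x \cup x, \alpha\rangle = 0$ for every $x \in H^2(G;\Z_2)$ and every $\alpha$), since a nondegenerate alternating form over a field has even rank, and an alternating form always decomposes as an orthogonal direct sum of a hyperbolic part and its radical, forcing the rank to be even regardless of degeneracy. The key structural input is the Charney--Davis computation: $H^*(G;\Z_2) \cong \Lambda[y_1,\dots,y_n]/I$, where $I$ is generated by the products $y_iy_j$ for non-commuting generators. Over $\Z_2$ the exterior algebra is still governed by the graded-commutativity relation $y_i^2 = 0$, so I would first record that every degree-$2$ class $x$ is a $\Z_2$-linear combination of the edge classes $z_{ij} = y_i \cup y_j$ (with $i<j$), and that the square of any such edge class vanishes because $z_{ij}\cup z_{ij}$ involves a repeated factor $y_i^2$ (or $y_j^2$), which is zero in the exterior algebra.

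The main step is then to verify $x \cup x = 0$ for a \emph{general} $x = \sum_{i<j} c_{ij} z_{ij}$, not merely for a basis element. Expanding, $x \cup x = \sum_{i<j} c_{ij}^2\, z_{ij}^2 + \sum_{\{i<j\}\neq\{k<l\}} c_{ij}c_{kl}\, z_{ij}\cup z_{kl}$. The diagonal terms vanish by the previous paragraph. For the off-diagonal terms I would pair $z_{ij}\cup z_{kl}$ with $z_{kl}\cup z_{ij}$; since these are cohomology classes in a graded-commutative ring, $z_{ij}\cup z_{kl} = (-1)^{4} z_{kl}\cup z_{ij} = z_{kl}\cup z_{ij}$, so each unordered pair contributes $z_{ij}\cup z_{kl} + z_{kl}\cup z_{ij} = 2\,z_{ij}\cup z_{kl} = 0$ in $\Z_2$. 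Hence $x \cup x = 0$ in $H^4(G;\Z_2)$, and therefore $\langle x\cup x, \alpha\rangle = 0$ for every $\alpha \in H_4(G;\Z_2)$. This shows the form is alternating for every choice of $\alpha$.

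To finish, I would invoke the standard classification of alternating bilinear forms over a field: any alternating form on a finite-dimensional $\Z_2$-vector space $V$ restricts to a nondegenerate alternating form on $V/\mathrm{rad}$, and a nondegenerate alternating form has even dimension (it admits a symplectic basis). Thus the rank, which equals $\dim(V/\mathrm{rad})$, is even for \emph{each} $\alpha$; in particular the maximum rank $m_2(G)$ over all $\alpha$ is even. The second assertion is then immediate: the lower bound of Proposition~\ref{hnewlowerbound} gives $h(G) \geq 2b_2(G) - m_2(G)$, and since $m_2(G)$ is even while $2b_2(G)$ is even, if $b_2(G)$ is odd we can still only conclude $h(G) \geq 2b_2(G) - m_2(G)$ directly — so the sharper step is to note that $h(G) \geq b_2(G)$ always, and that $b_2(G) - m_2(G)$ has the same parity as $b_2(G)$ (odd), so $2b_2(G) - m_2(G)$ is odd; combined with the trivial bound this yields $b_2(G) + 1 \leq h(G)$ whenever $b_2(G)$ is odd.

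The subtle point I expect to be the main obstacle is precisely the parity bookkeeping in the final implication. The inequality $2b_2(G) - m_2(G) \leq h(G)$ alone does not obviously improve on the trivial bound $b_2(G) \leq h(G)$ when $m_2(G)$ is close to $b_2(G)$; the argument must use that $m_2(G)$ is even to pin down the parity of the radical dimension $b_2(G) - m_2(G)$ relative to $b_2(G)$. Care is needed to confirm that the even-rank conclusion genuinely forces the claimed strict improvement $b_2(G)+1 \leq h(G)$ rather than merely the existing bound; the likely route is that when $b_2(G)$ is odd the radical dimension $b_2(G) - m_2(G)$ is odd hence at least $1$, so $2b_2(G) - m_2(G) \geq b_2(G) + 1$, which is exactly what is wanted. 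Verifying the classification-of-alternating-forms input over $\Z_2$ (as opposed to a field of characteristic $\neq 2$, where symmetric and alternating forms differ) is the one place to state carefully, since over $\Z_2$ ``alternating'' must be taken to mean $\langle x,x\rangle = 0$, not merely antisymmetry.
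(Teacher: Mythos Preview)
Your approach is essentially the paper's: show the form~(\ref{formmod2}) is alternating, invoke the even-rank property of alternating forms over a field, then deduce the parity conclusion via Proposition~\ref{hnewlowerbound}. You are in fact more careful than the paper in one respect: the paper only checks $\langle z_{ij}\cup z_{ij},\alpha\rangle=0$ on basis vectors and then asserts the form is alternating, whereas you verify $x\cup x=0$ for general $x$. (Over $\Z_2$ the basis check suffices, since for a symmetric form $B(x+y,x+y)=B(x,x)+B(y,y)$, but it is good that you spell it out.)

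There is one genuine slip in your third paragraph: you write that ``$2b_2(G)-m_2(G)$ is odd,'' but this quantity is always even (even minus even). That line of reasoning goes nowhere. Fortunately, your final paragraph supplies the correct argument, which is also the paper's: since $m_2(G)\le b_2(G)$ and $m_2(G)$ is even, if $b_2(G)$ is odd then $m_2(G)\le b_2(G)-1$, whence $2b_2(G)-m_2(G)\ge b_2(G)+1$, and Proposition~\ref{hnewlowerbound} finishes. Drop the muddled parity sentence and go straight to this.
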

\begin{proof}
Let $\{z_i\}$ be the set of generators of $H^1(G)$. Any nonzero generator of $H^2(G)$ is of the form $z_i \cup z_j$. Under the cup product map in (\ref{formmod2}), $\langle(z_i \cup z_j)^2,\alpha\rangle$ is zero for any choice of $\alpha$, since the $z_i$ are odd dimensional homology classes. A bilinear form $B:V \times V \to GF(q)$ is considered \emph{alternating} if $B(x,x)=0$ for all $x\in V$. Thus (\ref{formmod2}) is an alternating form. In \cite[Lemma 10]{DelsarteGoethals75} it is shown that alternating bilinear forms over $GF(q)$ have even rank. For $GF(q)=\Z_2$, we see that the rank must be even, and thus $m_2(G)$ must be even. If $b_2(G)$ is odd, then $m_2(G)$ is at most $b_2(G)-1$, and so $b_2(G) + 1 \leq h(G)$. 
\end{proof}

%%%%%%%%%%%%%%%%%%%%%%%%%%%%%%%%%%%%%%%%%%%%%%%%%%%%%%%%%%%%%%%%%%%%%%%%%%%%%%%%%%%%%%%%%%%%%%%%%%%%%%%%%%%%%%%%%%%%
\subsection{Finding $m_2(G)$ from a graph associated to $G$} \label{lower bound graph}
As discussed above, determining the maximum rank of (\ref{formmod2}) is equivalent to determining the minimum dimension of the radical of $H^2(G,\Z_2)$. In many cases it is not difficult to calculate this minimum dimension straight from the graph of $G$. 

In the following example, we let $\{s_i\}$ be a basis for the homology and $\{z_i\}$ be the dual basis for the cohomology. 

\begin{numbered example} \label{m2 by hand}
Let the graph of G be given below:

\begin{center}
\begin{tikzpicture}[scale=2]
\tikzstyle{every node}=[draw, shape=circle, inner sep=.5mm, minimum size=.7cm, line width=1pt];
\path (0,1) node (y1) {$s_1$};
\path (0,0) node (y2) {$s_2$};
\path (2,1) node (y3) {$s_5$};
\path (2,0) node (y4) {$s_6$};
\path (1,1) node (x1) {$s_3$};
\path (1,0) node (x2) {$s_4$};
\draw[line width = 2pt] (y2) -- (x1) -- (x2) -- (y1)  -- (x1) -- (y3) (y2) -- (x2) -- (y4) (y3) -- (x2) (y4) -- (x1) (y1) -- (y2) (y3) -- (y4);
\end{tikzpicture}
\end{center}

This graph is made up of exactly two 4-cliques, so $b_4(G)=2$. Since each 4-clique has 6 edges, we can compute $b_2(G)$ by multiplying 6 by the number of 4-cliques and subtracting the number of \emph{shared edges}, edges that belong to more than one 4-clique. In this example, $b_2(G) = 6(2)-1=11$. Define the two generators $s_{1234}$ and $s_{3456}$ of $H_4(G)$ to be $\alpha_1$ and $\alpha_2$, respectively. Let $\alpha$ be an arbitrary element of $H_4(G;\Z_2)$. Then $\alpha$ is of the form $c_1\alpha_1 + c_2\alpha_2$, where $c_1$ and $c_2$ are either 0 or 1. There are only three nontrivial choices for $\alpha$. If $c_1=0$, then $z_{13}$, $z_{14}$, $z_{23}$, and $z_{24}$ give a basis for the radical of the form (\ref{formmod2}). For any generator $z$ of $H^2(G)$, $\langle z_{13} \cup z,\alpha_2\rangle = 0$, $\langle z_{14} \cup z,\alpha_2\rangle =0$, $\langle z_{23} \cup z,\alpha_2\rangle = 0$, and $\langle z_{24} \cup z,\alpha_2\rangle =0$. This is shown in the graph since each of the corresponding four edges ($s_{13}$, $s_{14}$, $s_{23}$, and $s_{24}$) are only part of the 4-clique $\alpha_1$. Similarly, if $c_2=0$, then $z_{35}$, $z_{36}$, $z_{45}$, and $z_{46}$ give a basis for the radical.  

Lastly, consider the case when both $c_1=1$ and $c_2=1$. Consider the image of $z_{12} + z_{56}$ cupped with an arbitrary generator $z$ under the form (\ref{formmod2}):
\begin{equation*}
\langle (z_{12}+z_{56}) \cup z,\alpha_1 + \alpha_2\rangle = \langle z_{12} \cup z,\alpha_1\rangle + \langle z_{56} \cup z,\alpha_1\rangle+ \langle z_{12} \cup z,\alpha_2\rangle + \langle z_{56} \cup z, \alpha_2\rangle
\end{equation*}
On the right-hand side, the middle two summands are zero, and $\langle z_{12} \cup z,\alpha_1\rangle$ and $\langle z_{56} \cup z, \alpha_2\rangle$ are zero unless $z=z_{34}$. If $z=z_{34}$, then $\langle z_{12} \cup z_{34},\alpha_1\rangle + \langle z_{56} \cup z_{34}, \alpha_2\rangle = 1 + 1 \equiv 0 \mod 2$. One can check other linearly independent elements of $H^2(G;\Z_2)$ and see that this unique element provides a basis for the radical. Thus the maximum rank of the form is 10 instead of 11. This gives the lower bound $12 \leq h(G)$. 

Consider next a graph of three 4-cliques attached edge-to-edge, and an arbitrary element $\alpha = c_1\alpha_1 + c_2\alpha_2 + c_3\alpha_3 \in H_4(G,\Z_2)$, $c_i \in \{0,1\}$. If any $c_i = 0$, the nullity of the form is at least 4, for the same reason as in the above case.  Thus we may assume $\alpha = \alpha_1 + \alpha_2+ \alpha_3$. One can verify that there are no nonzero elements in the radical. 

In a graph of four 4-cliques attached edge-to-edge, we have the same assumption that the minimum nullity of the form occurs with the choice $\alpha = \alpha_1 + \dotsc + \alpha_4$. Again, the nullity is 1; the element in this radical is the sum of the generators represented by the \textbf{\textcolor{blue}{bold}} edges in the following graph:

\begin{center}
\begin{tikzpicture}
\foreach \i in {0,...,4}{
    \path (\i*1,0) coordinate (v\i);
    \path (\i*1,1) coordinate (w\i);
    \draw (v\i) -- (w\i);
}
\draw (v0)--(v4) (w0)--(w4) (w0) -- (v1) -- (w2) -- (v3) -- (w4) (v0) -- (w1) -- (v2) -- (w3) -- (v4);
\foreach \i in {0,2,4}{\draw[line width=2pt,blue] (v\i)--(w\i);}
\foreach \i in {0,...,4}{\fill (v\i) circle (3pt); \fill (w\i) circle (3pt);}
\end{tikzpicture}
\end{center}

\noindent A pattern develops that indicates that in graphs with a string of $k$ 4-cliques attached edge-to-edge, the nullity is either 0 or 1, depending on the parity of $k$. 
\end{numbered example}

Alternatively, for a \RAAG\ $G$, one can bound $h(G)$ from below by finding a maximum isotropic subspace of the form (\ref{formmod2}). This yields the same calculation of the lower bound from Proposition \ref{hnewlowerbound}, since twice the dimension of a maximum isotropic subspace of $H^2(G;\Z_2)$ is equal to $2b_2(G)-m_2(G)$. In some cases we can find a subset of the generators of $H^2(G;\Z_2)$ that form a maximum isotropic subspace, which are represented in the associated graph as edges.
\vskip -.2 in
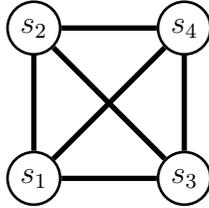
\begin{figure}[h]
\begin{center}
\begin{tikzpicture}[scale=2]
\tikzstyle{every node}=[draw, shape=circle, inner sep=.5mm, minimum size=.7cm, line width=1pt];
\path (0,0) node (x1) {$s_1$};
\path (0,1) node (x2) {$s_2$};
\path (1,0) node (x3) {$s_3$};
\path (1,1) node (x4) {$s_4$};
\draw[line width = 2pt] (x1) -- (x2) -- (x3) -- (x4) -- (x1) -- (x3) (x2) -- (x4);
\end{tikzpicture}
\caption{A 4-clique, the graph associated to $\Z^4$.}
\label{zfour}
\end{center}
\end{figure} 

Consider the graph of a 4-clique in Figure \ref{zfour}. The vertices $\{s_i\}$ determine an ordered basis $\{z_i\}$ for $H^1(\Z^4)$. Then  $\{z_{12}, z_{13}, z_{14}, z_{23}, z_{24}, z_{34}\}$ represent edges of the graph, and $z_{1234}$ represents the 4-clique. The following two sets give maximum isotropic subspaces for $H^2(\Z^4)$: $\{z_{12},z_{24},z_{14}\}$ and $\{z_{12},z_{24},z_{23}\}$. In each set, every pair of generators is either of the form $(z_{ij},z_{jk})$,  $(z_{ij},z_{ik})$, or $(z_{ij},z_{jk})$. In every pair, the product of the two generators is zero because $z_{ii}=0$ for all $i$. The edges represented by the two sets above form a triangle and a claw, respectively. The two isotropic subspaces are \textbf{\textcolor{blue}{highlighted}} in Figure \ref{isotropic} (a).  Of course, these sets are not the only choices for maximum isotropic subspaces for a 4-clique. However, any three dimensional isotropic subspace of a 4-clique will either form a triangle or a claw in the graph. 

Consequently, pairs of generators $(z_{ij},z_{kl})$ will cup nontrivially if $i,j,k,l$ are all distinct. Therefore in every 4-clique of a graph, the maximum isotropic subspace will never contain any pair of \textbf{\textcolor{red}}{bold} edges shown in Figure \ref{isotropic} (b).

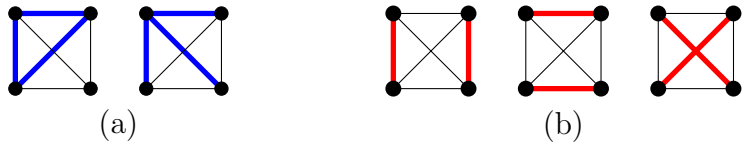
\begin{figure}[h]
\begin{center}
\begin{minipage}{.3\linewidth}
\begin{center}
\begin{tikzpicture}
\draw[blue, line width = 2pt] (0,0) -- (0,1) -- (1,1) --  (0,0);
\draw[black, fill=black]
	(0,0) circle (2.5pt)
	(0,1) circle (2.5pt)
	(1,1) circle (2.5pt)	
	(1,0) circle (2.5pt);
\draw (0,0) -- (1,0) -- (0,1) (1,0) -- (1,1);
\end{tikzpicture}
\quad
\begin{tikzpicture}
\draw[blue, line width = 2pt] (0,0) -- (0,1) -- (1,1)  (0,1) -- (1,0);
\draw[black, fill=black]
	(0,0) circle (2.5pt)
	(0,1) circle (2.5pt)
	(1,1) circle (2.5pt)	
	(1,0) circle (2.5pt);
\draw (0,0) -- (1,0) -- (1,1) -- cycle;
\end{tikzpicture} \\
(a)
\end{center}
\end{minipage}
\begin{minipage}{.4\linewidth}
\begin{center}
\begin{tikzpicture}
\draw[red, line width = 2pt] (0,0) -- (0,1)  (1,0) -- (1,1);
\draw[black, fill=black, line width=1pt]
	(0,0) circle (2.5pt)
	(0,1) circle (2.5pt)
	(1,1) circle (2.5pt)	
	(1,0) circle (2.5pt);
\draw (0,0) -- (1,0) -- (0,1) -- (1,1) -- cycle;
\end{tikzpicture}
\quad
\begin{tikzpicture}
\draw[red, line width = 2pt] (0,0) -- (1,0) (0,1) -- (1,1);
\draw[black, fill=black, line width=1pt]
	(0,0) circle (2.5pt)
	(0,1) circle (2.5pt)
	(1,1) circle (2.5pt)	
	(1,0) circle (2.5pt);
\draw (0,0) -- (0,1) -- (1,0) -- (1,1) -- cycle;
\end{tikzpicture}
\quad
\begin{tikzpicture}
\draw[red, line width = 2pt] (0,0) -- (1,1)  (0,1) -- (1,0);
\draw[black, fill=black, line width=1pt]
	(0,0) circle (2.5pt)
	(0,1) circle (2.5pt)
	(1,1) circle (2.5pt)	
	(1,0) circle (2.5pt);
\draw (0,0) -- (1,0) -- (1,1) -- (0,1) -- cycle;
\end{tikzpicture} \\
(b)
\end{center}
\end{minipage}
\end{center}
\caption{(a) Triangles and claws, formed by the \textbf{\textcolor{blue}{bold}} edges in (a), are subgraphs that make up an isotropic subspace in each 4-clique. Pairs of edges \textbf{\textcolor{red}{highlighted}} in (b) do not.}
\label{isotropic}
\end{figure}

%%%%%%%%%%%%%%%%%%%%%%%%%%%%%%%%%%%%%%%%%%%%%%%%%%%%%%%%%%%%%%%%%%%%%%%%%%%%%%%%%%%%%%%%%%%%%%%%%%%%%%%%%%%%%%%%%%%%
\subsection{Cohomologically minimal groups} \label{property}

The main question we will discuss in this paper is the following: 

\emph{For which \RAAGs\ does the structure of $H^*(G)$ yield the only obstruction to calculating $h(G)$?} 

Let us make the following definition.
\begin{defn}
We say that a finitely presented group $G$ is \emph{\cm}\ if  $h(G) = 2b_2(G) - m_2(G)$. 
\end{defn}

Restricting our discussion of the minimum $b_2$ problem to \cm\ groups, we now prove Theorems \ref{freeproducts}, \ref{wedging}, and \ref{breakdown}. 

\begin{proof}[Proof of Theorem \ref{freeproducts}]
By assumption, $h(G_1) = 2b_2(G_1) - m_2(G_1)$ and $h(G_2) = 2b_2(G_2) - m_2(G_2)$. For the free product $G_1*G_2$, $m_2(G_1*G_2)=m_2(G_1)+m_2(G_2)$ and $b_2(G_1*G_2) = b_2(G_1) + b_2(G_2)$. To see the former statement, note that the bilinear form under the free product splits into a direct sum of forms. For the latter statement, note that homology is additive under free products. This gives a lower bound on $h(G_1*G_2)$: 
\begin{eqnarray*}
2(b_2(G_1) + b_2(G_2)) - (m_2(G_1) + m_2(G_2)) &\leq& h(G_1*G_2) \\
h(G_1) + h(G_2) & \leq & h(G_1*G_2) 
\end{eqnarray*}

Let $M_i$ be a realizing manifold for $h(G_i)$; that is, $\pi_1(M_i) \cong G_i$ and $b_2(M_i) = h(G_i)$. Then $b_2(M_1 \# M_2) = b_2(M_1)+b_2(M_2) = h(G_1) + h(G_2)$.  Therefore $h(G_1*G_2) = h(G_1) + h(G_2)$.
Note that this implies one realizing manifold for $h(G_1*G_2)$ is the connected sum of the realizing manifolds for $h(G_1)$ and $h(G_2)$.
\end{proof} 

\begin{proof}[Proof of Theorem \ref{wedging}]
The proof of this theorem is very similar to that of Theorem \ref{freeproducts}. Let $G_i$ be the \RAAG\ associated to $\Gamma_i$. By assumption, $h(G_1) + h(G_2) = 2(b_2(G_1) + b_2(G_2)) - (m_2(G_1)+m_2(G_2))$. We will first see that by identifying generators of $G_1$ and $G_2$, we do not create any new 4-cliques, which will assert that $m_2(G) = m_2(G_1) + m_2(G_2)$. 

Say that by identifying $s_i$ with $t_i$ and $s_j$ with $t_j$ we create a 4-clique involving the two newly identified generators. This would require an edge between either $s_i$ and $s_j$ or $t_i$ and $t_j$. However, we have assumed both the $\{s_i\}$ and $\{t_i\}$ are pairwise non-adjacent. No edges in $\Gamma_1$ will form a 4-clique with edges in $\Gamma_2$ after the identifications of the vertices, so the bilinear form for $H^2(G)$ splits into a direct sum of forms for $H^2(G_1)$ and $H^2(G_2)$. Thus $h(G_1)+h(G_2) \leq h(G)$.

Let $M_i$ be a realizing manifold for $h(G_i)$. We can build a realizing manifold $M$ for $h(G)$ by taking $M_1\# M_2$ and performing $m$ surgeries, each identifying $s_i$ with $t_i$. These surgeries do not increase $b_2$, as we will see in Section \ref{KL tools}. Thus, $M$ has $b_2(M)=b_2(M_1)+b_2(M_2) = h(G_1)+h(G_2)$.
\end{proof}

\begin{proof}[Proof of Theorem \ref{breakdown}]
The $r$ edges deleted from $\Gamma$ represent basis elements of $H^2(G)$ (as are all edges of $\Gamma$) and necessarily cup to zero with any other basis element under (\ref{formmod2}), and so they are in the radical. This and the assumption that each $G_i$ is \cm\ imply that $m_2(G) = \sum_i m_2(G_i)$. Note also that $b_2(G) = \sum_i b_2(G_i) + r$. Therefore we have the following lower bound on $h(G)$:
\begin{equation*}
2(b_2(G_1) + \dotsm + b_2(G_k)+r) - (m_2(G_1) + \dotsm + m_2(G_k)) = h(G_1) + \dotsm h(G_k) + 2r \leq h(G).
\end{equation*}
Let $M_{i}$ be a realizing manifold for $h(G_i)$. Build a realizing manifold $M$ for $h(G)$ by starting with the connected sum $M_1 \# \dotsm \# M_k$ and performing $r$ surgeries to induce the relations we initially ignored from $G$. Each surgery increases $b_2$ by 2, as we will see in Section \ref{KL tools}. These surgeries yield a 4-manifold $M$ with $\pi_1(M) = G$ and $b_2(M) = \sum_i b_2(M_i) + 2r=\sum_i h(G_i) +2r$. 
\end{proof}

These theorems break down the minimum $b_2$ problem for \RAAGs\ into smaller subproblems. Specifically, one need only consider the case where $\Gamma$ is a connected graph containing only 4-cliques.

\begin{numbered example} \label{example of breakdown}
Let $G$ be a \RAAG\ with associated graph $\Gamma$ in Figure \ref{breakdown example} (a). Using the above theorems, we can break down the calculation of $h(G)$ into calculations for three different groups. 
\vskip .2 in
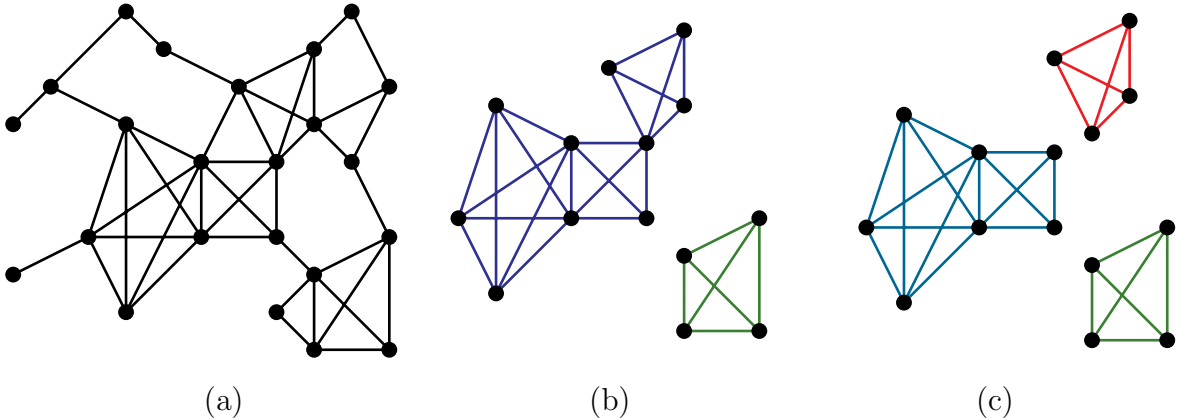
\begin{figure}[h]
\centering
\begin{minipage}{.32\linewidth}
\centering
\begin{tikzpicture}[scale=.5]
\path (0,5) coordinate (a); \fill (a) circle (6pt);
\path (1,6) coordinate (b); \fill (b) circle (6pt);
\path (3,8) coordinate (c); \fill (c) circle (6pt);
\path (3,5) coordinate (d); \fill (d) circle (6pt);
\path (4,7) coordinate (e); \fill (e) circle (6pt);
\path (5,4) coordinate (f); \fill (f) circle (6pt);
\path (6,6) coordinate (g); \fill (g) circle (6pt);
\path (0,1) coordinate (h); \fill (h) circle (6pt);
\path (2,2) coordinate (i); \fill (i) circle (6pt);
\path (3,0) coordinate (j); \fill (j) circle (6pt);
\path (5,2) coordinate (k); \fill (k) circle (6pt);
\path (7,2) coordinate (l); \fill (l) circle (6pt);
\path (7,0) coordinate (m); \fill (m) circle (6pt);
\path (8,1) coordinate (n); \fill (n) circle (6pt);
\path (8,-1) coordinate (o); \fill (o) circle (6pt);
\path (10,2) coordinate (p); \fill (p) circle (6pt);
\path (10,-1) coordinate (q); \fill (q) circle (6pt);
\path (7,4) coordinate (r); \fill (r) circle (6pt);
\path (8,5) coordinate (s); \fill (s) circle (6pt);
\path (8,7) coordinate (t); \fill (t) circle (6pt);
\path (9,8) coordinate (u); \fill (u) circle (6pt);
\path (10,6) coordinate (v); \fill (v) circle (6pt);
\path (9,4) coordinate (w); \fill (w) circle (6pt);
\draw[line width = 1pt] (a)--(b)--(c)--(e)--(g)--(f) (b)--(d) (t)--(u)--(v)--(s)--(w)--(v) (h)--(i) (l)--(n)--(m)--(o) (w)--(p);
\draw[line width = 1pt] (i)--(d)--(f)--(k)--(j)--(i)--(f)--(j)--(d)--(k)--(i) (f)--(r)--(l)--(k)--(r) (f)--(l);
\draw[line width = 1pt] (g)--(t)--(s)--(r)--(g)--(s) (r)--(t);
\draw[line width = 1pt] (o)--(n)--(p)--(q)--(o)--(p) (n)--(q);
\end{tikzpicture} 
\end{minipage}
\begin{minipage}{.32\linewidth}
\centering
\begin{tikzpicture}[scale=.5]
\path (3,5) coordinate (d); 
\path (5,4) coordinate (f); 
\path (6,6) coordinate (g);
\path (2,2) coordinate (i); 
\path (3,0) coordinate (j); 
\path (5,2) coordinate (k); 
\path (7,2) coordinate (l); 
\path (8,1) coordinate (n); 
\path (8,-1) coordinate (o); 
\path (10,2) coordinate (p); 
\path (10,-1) coordinate (q); 
\path (7,4) coordinate (r); 
\path (8,5) coordinate (s); 
\path (8,7) coordinate (t); 
\draw[line width = 1pt, Blue] (i)--(d)--(f)--(k)--(j)--(i)--(f)--(j)--(d)--(k)--(i) (f)--(r)--(l)--(k)--(r) (f)--(l);
\draw[line width = 1pt, Blue] (g)--(t)--(s)--(r)--(g)--(s) (r)--(t);
\draw[line width = 1pt, OliveGreen] (o)--(n)--(p)--(q)--(o)--(p) (n)--(q);
\fill (d) circle (6pt);
\fill (f) circle (6pt);
\fill (g) circle (6pt);
\fill (i) circle (6pt);
\fill (j) circle (6pt);
\fill (k) circle (6pt);
\fill (l) circle (6pt);
\fill (n) circle (6pt);
\fill (o) circle (6pt);
\fill (p) circle (6pt);
\fill (q) circle (6pt);
\fill (r) circle (6pt);
\fill (s) circle (6pt);
\fill (t) circle (6pt);
\end{tikzpicture} 
\end{minipage}
\begin{minipage}{.32\linewidth}
\centering
\begin{tikzpicture}[scale=.5]
\path (3,5) coordinate (d); 
\path (5,4) coordinate (f); 
\path (7,6.5) coordinate (g);
\path (2,2) coordinate (i); 
\path (3,0) coordinate (j); 
\path (5,2) coordinate (k); 
\path (7,2) coordinate (l); 
\path (8,1) coordinate (n); 
\path (8,-1) coordinate (o); 
\path (10,2) coordinate (p); 
\path (10,-1) coordinate (q); 
\path (8,4.5) coordinate (r'); 
\path (7,4) coordinate (r); 
\path (9,5.5) coordinate (s); 
\path (9,7.5) coordinate (t); 
\draw[line width = 1pt, MidnightBlue] (i)--(d)--(f)--(k)--(j)--(i)--(f)--(j)--(d)--(k)--(i) (f)--(r)--(l)--(k)--(r) (f)--(l);
\draw[line width = 1pt, Red] (g)--(t)--(s)--(r')--(g)--(s) (r')--(t);
\draw[line width = 1pt, OliveGreen] (o)--(n)--(p)--(q)--(o)--(p) (n)--(q);
\fill (d) circle (6pt);
\fill (f) circle (6pt);
\fill (g) circle (6pt);
\fill (i) circle (6pt);
\fill (j) circle (6pt);
\fill (k) circle (6pt);
\fill (l) circle (6pt);
\fill (n) circle (6pt);
\fill (o) circle (6pt);
\fill (p) circle (6pt);
\fill (q) circle (6pt);
\fill (r) circle (6pt);
\fill (s) circle (6pt);
\fill (t) circle (6pt);
\fill (r') circle (6pt);
\end{tikzpicture} 
\end{minipage}

\bigskip
(a) \hspace{1.7 in} (b) \hspace{1.7 in} (c)
\caption{An example of the breakdown of a graph into disjoint subgraphs, for the calculation of $h$ according to Theorems \ref{wedging} and \ref{breakdown}}
\label{breakdown example}
\end{figure}

By removing 16 edges in $\Gamma$ that are not part of a 4-clique, we get two disjoint graphs in Figure \ref{breakdown example} (b). Call these two graphs $\Gamma_1$ and $\Gamma_2$. Assuming the resulting \RAAGs\ $G_1$ and $G_2$ associated to $\Gamma_1$ and $\Gamma_2$ are \cm, Theorem \ref{breakdown} asserts that $h(G) = h(G_1)+h(G_2) + 2(16)$. Furthermore, the graph on the left in Figure \ref{breakdown example} (b) has two subgraphs joined at one vertex. By splitting the subgraphs apart, we have the three disjoint graphs in Figure \ref{breakdown example} (c). Call these disjoint graphs $\Gamma_a$, $\Gamma_b$, and $\Gamma_c$. Theorem \ref{wedging} asserts that $h(G_1)+h(G_2) = h(G_a) + h(G_b) + h(G_c)$, under the assumption that $G_a$, $G_b$, and $G_c$ are each \cm. Together, we have
\begin{equation*}
h(G) = h(G_a)+h(G_b)+h(G_c) + 32.
\end{equation*}
Indeed, the groups corresponding to the graphs in Figure \ref{breakdown example} (c) are \cm. In Section \ref{surface graphs} we will complete the calculation of $h(G)$ by calculating $h(G_a)$, $h(G_b)$, and $h(G_c)$. See Example \ref{5-clique and 4-clique along an edge} for details.
\end{numbered example}

%%%%%%%%%%%%%%%%%%%%%%%%%%%%%%%%%%%%%%%%%%%%%%%%%%%%%%%%%%%%%%%%%%%%%%%%%%%%%%%%%%%%%%%%%%%%%%%%%%%%%%%%%%%%%%%%%%%%
%%%%%%%%%%%%%%%%%%%%%%%%%%%%%%%%% TOOLS FOR 4-MANIFOLD CONSTRUCTIONS %%%%%%%%%%%%%%%%%%%%%%%%%%%%%%%%%%%%%%%%%%%%%%%
%%%%%%%%%%%%%%%%%%%%%%%%%%%%%%%%%%%%%%%%%%%%%%%%%%%%%%%%%%%%%%%%%%%%%%%%%%%%%%%%%%%%%%%%%%%%%%%%%%%%%%%%%%%%%%%%%%%%
\section[Construction tools]{Tools for 4-manifold constructions}\label{tools}

%%%%%%%%%%%%%%%%%%%%%%%%%%%%%%%%%%%%%%%%%%%%%%%%%%%%%%%%%%%%%%%%%%%%%%%%%%%%%%%%%%%%%%%%%%%%%%%%%%%%%%%%%%%%%%%%%%%%
\subsection{Tools from \cite{KirkLivingston05}} \label{KL tools}
We will make use of the following classical result.

\begin{lemma}[{Milnor, \cite[Lemma 2]{Milnor61}}] \label{surgeries}
If a 4-manifold $M'$ is constructed from a compact 4-manifold $M$ via surgery along a curve $\gamma$, then $b_2(M') = b_2(M)$ if $\gamma$ is of infinite order in $H_1(M)$ and $b_2(M') = b_2(M) + 2$ otherwise. 
\end{lemma}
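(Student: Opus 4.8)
The plan is to sidestep a direct computation of $H_2$ and instead pin down $b_2(M')$ through the Euler characteristic together with $b_1$, using the Poincaré duality identity $\chi = 2 - 2b_1 + b_2$ already recorded in the excerpt. Write $M_0$ for the complement $M \setminus \mathrm{int}(S^1 \times D^3)$ of an open tubular neighbourhood of $\gamma$ (the normal bundle of a curve in an orientable $4$-manifold being trivial). Then $M = M_0 \cup_T (S^1 \times D^3)$ and the surgered manifold is $M' = M_0 \cup_T (D^2 \times S^2)$, both glued along $T = S^1 \times S^2$; only the glued-in piece changes.

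First I would compute the change in Euler characteristic. Since $\chi$ is additive along a common boundary, $\chi(X \cup_T Y) = \chi(X) + \chi(Y) - \chi(T)$, and since $\chi(S^1 \times D^3) = \chi(S^1 \times S^2) = 0$ while $\chi(D^2 \times S^2) = 2$, this gives $\chi(M) = \chi(M_0)$ and $\chi(M') = \chi(M_0) + 2$, hence $\chi(M') = \chi(M) + 2$. This step is purely combinatorial and, crucially, independent of the framing used for the surgery.

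Next I would track $b_1$. A van Kampen computation, using $\pi_1(D^2 \times S^2) = 1$ and that $\pi_1(S^1 \times S^2) = \Z$ is generated by $\gamma$, shows that $\pi_1(M')$ is obtained from $\pi_1(M)$ by killing the normal closure of $[\gamma]$; abelianizing, $H_1(M';\Q) \cong H_1(M;\Q)/\langle [\gamma]\rangle$. Therefore $b_1(M') = b_1(M) - 1$ exactly when $[\gamma]$ is nonzero in $H_1(M;\Q)$, i.e.\ when $\gamma$ has infinite order, and $b_1(M') = b_1(M)$ when $[\gamma]$ is torsion. (Equivalently one may read this off the trace $W$ of the surgery: from $W \simeq M \cup_\gamma e^2$ one gets $H_1(W;\Q) = H_1(M;\Q)/\langle[\gamma]\rangle$, and the dual-handle description $W \simeq M' \cup e^3$ gives $H_1(M';\Q) \cong H_1(W;\Q)$.)

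Finally I would combine these. Solving $\chi = 2 - 2b_1 + b_2$ for $b_2$ in both manifolds and subtracting yields $b_2(M') - b_2(M) = 2 + 2\big(b_1(M') - b_1(M)\big)$, which is $0$ in the infinite-order case and $2$ in the torsion case, exactly as claimed. The main obstacle is the $b_1$ step: one must verify carefully that surgery along a $1$-sphere kills precisely $[\gamma]$ in $H_1$ and creates no spurious classes from the $D^2 \times S^2$, and that the stated dichotomy ``$\gamma$ of infinite order versus otherwise'' corresponds exactly to ``$[\gamma]$ nonzero versus zero in $H_1(\,\cdot\,;\Q)$.'' Everything else is bookkeeping with $\chi$ and Poincaré duality.
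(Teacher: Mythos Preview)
Your argument is correct and follows essentially the same route as the paper: compute $\chi(M')=\chi(M)+2$ from the surgery description, determine the change in $b_1$ (and hence $b_3$ by Poincar\'e duality) according to whether $[\gamma]$ has infinite order, and read off the change in $b_2$ from the Euler characteristic. Your write-up is more explicit about the $\chi$ and $b_1$ computations (via the cut-and-paste formula and van Kampen/trace), but the underlying strategy is identical.
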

\begin{proof}
Surgery on $M$ is performed by removing $S^1 \times B^3$ and replacing it with $D^2 \times S^2$, so $\chi(M') = \chi(M) + 2$. If $\gamma$ is of infinite order in $H_1(M)$, $b_1(M') = b_1(M)-1$ and $b_3(M') = b_3(M)-1$. Thus the difference in Euler characteristic comes from the change in $b_1$ and $b_3$, so $b_2(M') = b_2(M)$. If $\gamma$ is of finite order, $b_1$ and $b_3$ are unchanged, so the difference in Euler characteristic comes from an increase in $b_2$ by 2. 
\end{proof}

We will use this lemma to perform two types of surgeries on curves in a 4-manifold. The first type is surgery to identify generators of the fundamental group: surgery on the curve $\gamma = ab^{-1}$ identifies generators $a$ and $b$ and is a curve of infinite order in $H_1$. The second type is surgery to kill a commutator relation. Performing surgery on the curve $\gamma = aba^{-1}b^{-1}$ kills the commutator of $a$ and $b$, and is a nullhomologous curve. Lemma \ref{surgeries} implies that performing surgery to identify generators does not change $b_2$, whereas a surgery to kill a commutator increases $b_2$. 

The next definition and subsequent theorem were developed in \cite{KirkLivingston05} and are extremely useful in constructing realizing manifolds for \RAAGs. 

\begin{defn}[{Kirk-Livingston, \cite[Definition 5]{KirkLivingston05}}]
A \emph{4-reduction of a group $G$ by a 4-tuple of elements} $[w_1,w_2,w_3,w_4]$, $w_i \in G$, is the quotient of $G$ by the normal subgroup generated by the 6 commutators $[w_i,w_j]$, $i<j$. This quotient is denoted $G/[w_1,w_2,w_3,w_4]$. 
More generally, we say a group $G$ can be \emph{4-reduced to the group $H$ using the 4-tuples} $\{[w_{1k},w_{2k},w_{3k},w_{4k}]\}$, $k=1,\dotsc, \ell$ if $H$ is isomorphic to the quotient of $G$ by the normal subgroup generated by the $6\ell$ commutators $[w_{ik},w_{jk}]$, $i<j$, $k = 1,\dotsc, \ell$.
\end{defn}

\begin{theorem}[{Kirk-Livingston, \cite[Theorem 6]{KirkLivingston05}}]
If $M$ is a 4-manifold and $w_i \in \pi_1(M)$ for $i=1,\dotsc,4$, then there is a 4-manifold $M'$ with $\pi_1(M') = \pi_1(M)/[w_1,w_2,w_3,w_4]$ and $b_2(M') = b_2(M) + 6$.
\end{theorem}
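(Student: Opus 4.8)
The plan is to realize all six commutator relations simultaneously by a single connected sum with the $4$-torus, followed by four $b_2$-preserving surgeries, rather than killing the six commutators one at a time (which would cost $2$ in $b_2$ per relation by Lemma~\ref{surgeries}, for a total of $12$). The essential point is that $T^4 = S^1\times S^1\times S^1\times S^1$ has $\pi_1(T^4)\cong\Z^4$ and $b_2(T^4)=\binom{4}{2}=6$, so it packages the six commutators $[x_i,x_j]$ among its four circle generators $x_1,\dots,x_4$ while contributing only $6$ to $b_2$.

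First I would form the connected sum $N = M \# T^4$. Since the connected sum of closed oriented $4$-manifolds is taken along an $S^3$ (with $H_1(S^3)=H_2(S^3)=0$), a Mayer--Vietoris argument gives $\pi_1(N)\cong \pi_1(M)*\Z^4$ and $b_2(N) = b_2(M) + b_2(T^4) = b_2(M)+6$. Write $x_1,\dots,x_4$ for the images in $\pi_1(N)$ of the four circle factors of $T^4$; these pairwise commute and generate the free factor $\Z^4$. Next I would perform four surgeries, the $i$-th along a curve $\gamma_i$ representing $x_i w_i^{-1}$, to identify $x_i$ with $w_i$ for $i=1,\dots,4$, using the identification technique recorded in Section~\ref{KL tools}. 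After all four identifications, a Tietze computation shows the fundamental group is obtained from $\pi_1(M)$ by adjoining exactly the relations $[w_i,w_j]=1$ (the relations $[x_i,x_j]=1$ transported across $x_i=w_i$, after eliminating the generators $x_i$), i.e.\ $\pi_1(M')\cong \pi_1(M)/[w_1,w_2,w_3,w_4]$.

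The key homological claim is that each $\gamma_i$ is of infinite order in $H_1$ of the manifold on which the surgery is performed, so that Lemma~\ref{surgeries} leaves $b_2$ unchanged at every step; it then follows that $b_2(M') = b_2(N) = b_2(M)+6$. This infinite-order claim is the only real obstacle, and I would verify it inductively. At the start, $H_1(N)\cong H_1(M)\oplus \Z^4$, and $[\gamma_1]=e_1 - [w_1]$ has nonzero component $e_1$ in the torsion-free $\Z^4$ summand, hence infinite order. After performing surgeries $1,\dots,i-1$ one has imposed only the relations $e_1=[w_1],\dots,e_{i-1}=[w_{i-1}]$, none of which involve $e_i,\dots,e_4$; thus $e_i$ still splits off as a free $\Z$ summand of $H_1$, so $[\gamma_i]=e_i-[w_i]$ again has infinite order and the $i$-th surgery preserves $b_2$.

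Combining the group computation with the $b_2$ count completes the proof. I expect the group-theoretic bookkeeping (checking that the free $\Z^4$ relations transport correctly under the four identifications and that no extraneous relations are introduced) and the inductive infinite-order verification to be the two steps requiring the most care; the efficiency gain over six independent commutator surgeries is precisely what the $T^4$ building block buys, since its six two-dimensional classes $[S^1_i\times S^1_j]$ absorb all six commutators at a homological cost of only $6$.
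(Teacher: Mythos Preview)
Your proposal is correct and follows essentially the same approach as the paper: form $M\#T^4$ to add $6$ to $b_2$, then perform four identification surgeries along $x_iw_i^{-1}$ that each preserve $b_2$ by Lemma~\ref{surgeries}. Your inductive verification that each $\gamma_i$ has infinite order in $H_1$ of the intermediate manifold is in fact more careful than the paper's own proof, which simply asserts infinite order in $H_1(M\#T^4)$ without tracking the changes after each surgery.
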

\begin{proof}
Form the connected sum $M \# T^4$ which increases $b_2$ by 6. Let $\pi_1(T^4)$ be generated by $\{x_1,x_2,x_3,x_4\}$. Perform surgery on 4 curves $x_iw_i^{-1}$, $i=1,\dotsc, 4$ to identify the generators of $\pi_1(T^4)$ with the elements $w_i$. By Lemma \ref{surgeries}, these surgeries do not change $b_2$ since they are of infinite order in $H_1(M\# T^4)$. The effect of the surgeries is that each of the elements $w_i$ commute with each other, so $M'$ is a manifold with the fundamental group claimed.
\end{proof}

%%%%%%%%%%%%%%%%%%%%%%%%%%%%%%%%%%%%%%%%%%%%%%%%%%%%%%%%%%%%%%%%%%%%%%%%%%%%%%%%%%%%%%%%%%%%%%%%%%%%%%%%%%%%%%%%%%%%
\subsection{Graphical representations of fundamental groups} \label{surface graphs}

Many realizing 4-manifold constructions contain connected sums of 4-tori and other products of surfaces. It is very convenient to view 4-manifolds by the graphs of their fundamental groups, if possible. 

First let us consider the product of a torus $T^2$ with a genus 2 surface $\Sigma_2$, with $\pi_1$ generated by $\{x_1,x_2\}$ and $\{y_1,y_2,y_3,y_4\}$. This 4-manifold has a commutator relation between $x_1$ and $x_2$ as well as commutator relations between the $x_i$ and $y_j$. These we can represent in a graph of the fundamental group as edges between the corresponding vertices. In addition to the commutator relations we have the surface relation $[y_1,y_2][y_3,y_4]=1$, so this 4-manifold does not have a \RAAG\ as its fundamental group. However, for convenience, let us display the surface relation as two dashed edges, one between $y_1$ and $y_2$ and the other between $y_3$ and $y_4$, as in Figure \ref{toruscrosssurface}.
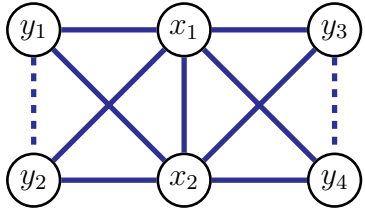
\begin{figure}[h]
\begin{center}
\begin{tikzpicture}[scale=2]
\tikzstyle{every node}=[draw, shape=circle, inner sep=.5mm, minimum size=.7cm, line width=1pt];
\path (0,1) node (y1) {$y_1$};
\path (0,0) node (y2) {$y_2$};
\path (2,1) node (y3) {$y_3$};
\path (2,0) node (y4) {$y_4$};
\path (1,1) node (x1) {$x_1$};
\path (1,0) node (x2) {$x_2$};
\draw[Blue, line width = 2pt] (y2) -- (x1) -- (x2) -- (y1)  -- (x1) -- (y3) (y2) -- (x2) -- (y4) (y3) -- (x2) (y4) -- (x1);
\draw[Blue, line width = 2 pt, dashed] (y1) -- (y2) (y3) -- (y4);
\end{tikzpicture}
\caption{A graph representing $\pi_1(T^2 \times \Sigma_2)$}
\label{toruscrosssurface}
\end{center}
\end{figure}

\noindent Note that if we perform surgery to either induce the commutator relation $[y_1,y_2]=1$ or $[y_3,y_4]=1$, or the relation is induced another way (for example, by a 4-reduction), then the resulting 4-manifold has a \RAAG\ as its fundamental group. 

\begin{numbered example}\label{5-clique and 4-clique along an edge}
Return to the graph $\Gamma$ from Example \ref{example of breakdown}. 

Two of the disjoint subgraphs in Figure \ref{breakdown example} (c) are 4-cliques. Without loss of generality, let these be $\Gamma_a$ and $\Gamma_b$. Both groups are copies of $\Z^4$, and $h(\Z^4)=6$. The third graph, $\Gamma_c$, consists of a 5-clique and a 4-clique sharing one edge. By calculating $m_2(G_c)=12$ and $b_2(G_c) =15$, we know $18 \leq h(G_c)$. A realizing manifold is built as follows: Start with $(T^2 \times \Sigma_2) \# T^4$, with $\pi_1$ generated by $\{x_1,x_2\}$, $\{y_1,y_2,y_3,y_4\}$, and $\{z_1,z_2,z_3,z_4\}$ and $b_2=16$. Perform surgery to identify $y_1$ with $z_2$, $x_1$ with $z_3$, and $z_4$ with $y_2$. These surgeries do not change $b_2$. Finally perform surgery to induce the commutator relation $[z_1,x_2]=1$. This surgery will increase $b_2$ by 2, and yields a manifold $M\in \mathcal M(G_c)$ with $b_2(M) = 18$. Figure \ref{figure 5-clique and 4-clique along an edge} shows the graph of $\pi_1(M)$. Thus $h(G_a)=6$, $h(G_b)=6$, and $h(G_c)=18$, and all groups are \cm.

Recall from Example \ref{example of breakdown} that 16 edges were deleted from $\Gamma$. By Theorems \ref{wedging} and \ref{breakdown}, we know $h(G) = h(G_a)+h(G_b)+h(G_c)+32$. Thus $h(G) = 6+ 6 + 18 + 32 = 62$.
 
\begin{figure}[h]
\begin{center}
\begin{tikzpicture}[scale=2]
\tikzstyle{every node}=[draw, shape=circle, inner sep=.5mm, minimum size=.8cm, line width=1pt];
\path (1,1.5) node (y1) {$y_1$};
\path (1,-0.5) node (y2) {$y_2$};
\path (3,1) node (y3) {$y_3$};
\path (3,0) node (y4) {$y_4$};
\path (2,1) node (x1) {$x_1$};
\path (2,0) node (x2) {$x_2$};
\path (2,1) node (z3) { };
\path (0.5,0.5) node (z1) {$z_1$};
\path (1,-0.5) node (z4) {};
\path (1,1.5) node (z2) {};
\draw[Blue, line width = 2pt] (y2) -- (x1) -- (x2) -- (y1)  -- (x1) -- (y3) (y2) -- (x2) -- (y4) (y3) -- (x2) (y4) -- (x1);
\foreach \i in {1,...,4}{\foreach \j in {\i,...,4}{\draw[red, line width = 2pt] (z\i) -- (z\j);}}	
\draw[red, line width = 2 pt] (y3) -- (y4);
\draw[Blue, line width = 2 pt, dashed] (y1) -- (y2) (y3) -- (y4);
\draw[OliveGreen, line width = 2pt] (z1)--(x2);
\end{tikzpicture} 
\caption{$\pi_1$ of the realizing manifold for $h(G_c)$}
\label{figure 5-clique and 4-clique along an edge}
\end{center}
\end{figure}
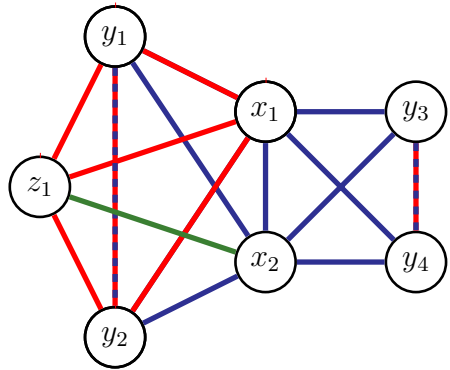
\end{numbered example}

%%%%%%%%%%%%%%%%%%%%%%%%%%%%%%%%%%%%%%%%%%%%%%%%%%%%%%%%%%%%%%%%%%%%%%%%%%%%%%%%%%%%%%%%%%%%%%%%%%%%%%%%%%%%%%%%%%%%
\subsection{4-reductions in action}\label{4-reductions in action}

Many realizing manifolds contain 4-reductions in their constructions, so it is helpful to see these reflected in the graphs of fundamental groups. Let us begin with a 4-manifold $M = \#5 (S^1 \times S^3)$ in which $b_2(M)=0$. Let the generators of $\pi_1(M)$ be $\{x_1,\dotsc, x_5\}$ as shown in Figure \ref{4 reduction at work} (a). Perform a 4-reduction on $[x_1x_5,x_2,x_3,x_4]$ to construct a 4-manifold $M'$. Recall that each 4-reduction consists of taking a 4-torus and identifying its generators with those in the 4-reduction. 

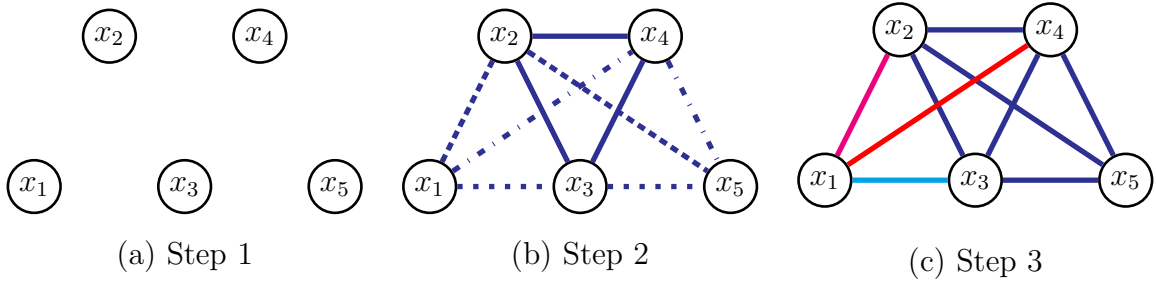
\begin{figure}[h]
\begin{center}
\begin{minipage}{.31\linewidth}
\centering
\begin{tikzpicture}[scale = 2]
\tikzstyle{every node}=[draw, shape=circle, inner sep=.5mm, minimum size=.7cm, line width=1pt];
\path (0,0) node (x1) {$x_1$};
\path (.5,1) node (x2) {$x_2$};
\path (1,0) node (x3) {$x_3$};
\path (1.5,1) node (x4) {$x_4$};
\path (2,0) node (x5) {$x_5$};
\end{tikzpicture}\\
\medskip
(a) Step 1
\end{minipage}
\begin{minipage}{.31\linewidth}
\centering
\begin{tikzpicture}[scale = 2]
\tikzstyle{every node}=[draw, shape=circle, inner sep=.5mm, minimum size=.7cm, line width=1pt];
\path (0,0) node (x1) {$x_1$};
\path (.5,1) node (x2) {$x_2$};
\path (1,0) node (x3) {$x_3$};
\path (1.5,1) node (x4) {$x_4$};
\path (2,0) node (x5) {$x_5$};
\draw[Blue, line width = 2pt] (x2) -- (x3) -- (x4) -- (x2);
\draw[Blue, loosely dashdotted, line width = 2pt] (x1) -- (x4) -- (x5);
\draw[Blue, densely dashed, line width = 2pt] (x1) -- (x2) -- (x5);
\draw[Blue, loosely dotted, line width = 2pt] (x1) -- (x3) -- (x5);		
\end{tikzpicture}\\
\medskip
(b) Step 2
\end{minipage}
\begin{minipage}{.31\linewidth}
\centering
\begin{tikzpicture}[scale = 2]
\tikzstyle{every node}=[draw, shape=circle, inner sep=.5mm, minimum size=.7cm, line width=1pt];
\path (0,0) node (x1) {$x_1$};
\path (.5,1) node (x2) {$x_2$};
\path (1,0) node (x3) {$x_3$};
\path (1.5,1) node (x4) {$x_4$};
\path (2,0) node (x5) {$x_5$};
\draw[Blue, line width = 2pt] (x2) -- (x3) -- (x4) -- (x2)  (x4) -- (x5) (x2) -- (x5) (x3) -- (x5);
\draw[red, line width = 2pt] (x1) -- (x4);
\draw[RubineRed,  line width = 2pt] (x1) -- (x2) ;
\draw[Cerulean, line width = 2pt] (x1) -- (x3) ;	
\end{tikzpicture}\\
\bigskip
(c) Step 3
\end{minipage}
\end{center}
\caption{A graph showing the path of edges created by the 4-reduction $[x_1x_5,x_2,x_3,x_4]$}
\label{4 reduction at work}
\end{figure}

Let us look a representation of the graph of $\pi_1(M')$ in Figure \ref{4 reduction at work} (b). The solid lines indicate the existence of the commutator relations between $x_2$, $x_3$, and $x_4$ given by the 4-reduction. The remaining three relations are $[x_1x_5,x_2]=1$, $[x_1x_5,x_3]=1$, and $[x_1x_5,x_4]=1$. We will refer to these types of commutator relations as \emph{surface-like relations} and we can view them as products of commutators. We consider $[x_1x_5,x_2]=1$ and $[x_1,x_2][x_5,x_2]=1$ equivalent relations since they represent the same commutator information. More formally, they normally generate the same subgroup. In the same way, we consider $[x_1x_5,x_3]=1$ equivalent to $[x_1,x_3][x_5,x_3]=1$ and $[x_1x_5,x_4]=1$ equivalent to $[x_1,x_4][x_5,x_4]=1$. 

Graphically, we will represent surface-like relations by dashed or dotted lines, as we did in Section \ref{surface graphs} with the surface relation of $\pi_1(\Sigma_2)$. Since we have three such relations, we can resemble them by three different styles of lines in the graph: dashed, dotted, and a combination of dashes and dots. 

Now perform surgery to induce the following relations: {$[x_1,x_2]=1$, $[x_1,x_3]=1$, and $[x_1,x_4]=1$. Because of the surface-like relations induced by the 4-reduction, we get three relations for free: $[x_2,x_5]=1$, $[x_3,x_5]=1$, and $[x_4,x_5]=1$. The resulting $\pi_1$ graph is in Figure \ref{4 reduction at work} (c). 

Consider a similar 4-reduction beginning with a manifold $M=\#6 (S^1 \times S^3)$, with generators $\{x_1,\dotsc, x_6\}$, as shown in Figure \ref{4 reduction at work 2} (a). Perform the following 4-reduction: $[x_1x_3x_6, x_2,x_4,x_5]$. As shown in Figure \ref{4 reduction at work 2} (b), the solid lines represent the three commutator relations between $x_2$, $x_4$, and $x_5$. The remaining three relations from the 4-reduction can be represented by the surface-like relations 
\begin{eqnarray*}
&[x_1,x_2][x_2,x_3][x_2,x_6]=1& \\
&[x_1,x_4][x_3,x_4][x_4,x_6]=1& \\
&[x_1,x_5][x_3,x_5][x_5,x_6]=1& 
\end{eqnarray*}

\noindent and are demonstrated by dashed, dotted, and dash-dotted lines in Figure \ref{4 reduction at work 2} (b). The following commutator surgeries result in a 4-manifold with the $\pi_1$ graph in Figure \ref{4 reduction at work 2} (c):
\begin{equation*}
[x_1,x_2], [x_2,x_3], [x_1,x_4], [x_4,x_6], [x_3,x_5], [x_5,x_6].
\end{equation*}

\begin{figure}[h]
\begin{center}
\begin{minipage}{.31\linewidth}
\centering
\begin{tikzpicture}[scale = 2]
\tikzstyle{every node}=[draw, shape=circle, inner sep=.5mm, minimum size=.7cm, line width=1pt];
\path (0,0) node (x1) {$x_1$};
\path (.5,1) node (x2) {$x_2$};
\path (1,2) node (x3) {$x_3$};
\path (1,0) node (x4) {$x_4$};
\path (1.5,1) node (x5) {$x_5$};
\path (2,0) node (x6) {$x_6$};
\end{tikzpicture}\\
\medskip
(a) Step 1
\end{minipage}
\begin{minipage}{.31\linewidth}
\centering
\begin{tikzpicture}[scale = 2]
\tikzstyle{every node}=[draw, shape=circle, inner sep=.5mm, minimum size=.7cm, line width=1pt];
\path (0,0) node (x1) {$x_1$};
\path (.5,1) node (x2) {$x_2$};
\path (1,2) node (x3) {$x_3$};
\path (1,0) node (x4) {$x_4$};
\path (1.5,1) node (x5) {$x_5$};
\path (2,0) node (x6) {$x_6$};
\draw[Blue, line width = 2pt] (x2) -- (x4) -- (x5) -- (x2);
\draw[Blue, loosely dashdotted, line width = 2pt] (x1) -- (x5) (x3) -- (x5) -- (x6);
\draw[Blue, densely dashed, line width = 2pt] (x1) -- (x2) -- (x3) (x2) -- (x6);
\draw[Blue, loosely dotted, line width = 2pt] (x1) -- (x4) -- (x6) (x4) -- (x3);	
\end{tikzpicture}\\
\medskip
(b) Step 2
\end{minipage}
\begin{minipage}{.31\linewidth}
\centering
\begin{tikzpicture}[scale = 2]
\tikzstyle{every node}=[draw, shape=circle, inner sep=.5mm, minimum size=.7cm, line width=1pt];
\path (0,0) node (x1) {$x_1$};
\path (.5,1) node (x2) {$x_2$};
\path (1,2) node (x3) {$x_3$};
\path (1,0) node (x4) {$x_4$};
\path (1.5,1) node (x5) {$x_5$};
\path (2,0) node (x6) {$x_6$};
\draw[Blue, line width = 2pt] (x2) -- (x4) -- (x5) -- (x2)  (x2) -- (x6) (x1) -- (x5) (x3) -- (x4);
\draw[red, line width = 2pt] (x3) -- (x5) -- (x6);
\draw[RubineRed,  line width = 2pt] (x1) -- (x2) (x2) -- (x3) ;
\draw[Cerulean, line width = 2pt] (x1) -- (x4) -- (x6);	
\end{tikzpicture}\\
\medskip
(c) Step 3
\end{minipage}
\end{center}
\caption{A graph showing the path of edges created by the 4-reduction $[x_1x_3x_6,x_2,x_4,x_5]$}
\label{4 reduction at work 2}
\end{figure}
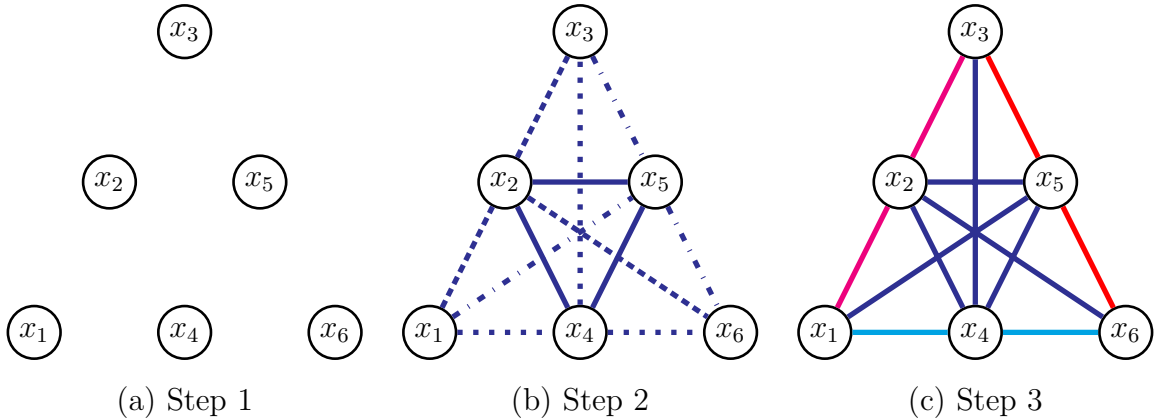

These are two examples of 4-reductions of the form $[a,b,c,de\dotsc]$, where $a$, $b$, and $c$ are generators of $\pi_1$ and the fourth element is a product of generators. If many of these types of 4-reductions are required in the construction of a realizing 4-manifold, it may be useful to highlight the three commutator relations between $a$, $b$, and $c$. In the two graphs below, we can \textbf{\textcolor{Magenta}{shade}} the area of the triangle bounded by the edges between vertices corresponding to $a$, $b$, and $c$. 
\begin{center}
\begin{minipage}{.4\linewidth}
\centering
\begin{tikzpicture}
\foreach \i in {1,...,3}{\path (\i-1,0) coordinate (a\i);}
\foreach \i in {1,2}{\path (\i-0.5,1) coordinate (b\i);}
\filldraw[Magenta, pattern=dots, pattern color=Magenta, line width=1pt] (a2)--(b1)--(b2)--cycle;
\draw (b1)--(a1)--(a3)--(b2) (a1)--(b2) (a3)--(b1);
\foreach \i in {1,2}{\fill (a\i) circle (3pt); \fill (b\i) circle (3pt);}
\fill (a3) circle (3pt);	
\end{tikzpicture}
\end{minipage}
\begin{minipage}{.4\linewidth}
\centering
\begin{tikzpicture}
\foreach \i in {1,...,3}{\path (\i-1,0) coordinate (a\i);}
\foreach \i in {1,2}{\path (\i-.5,1) coordinate (b\i);}
\path (1,2) coordinate (c1);
\draw (a1)--(c1)--(a3)--cycle (a1)--(b2) (a3)--(b1) (c1)--(a2);
\filldraw[Magenta, pattern=dots, pattern color= Magenta, line width = 1pt] (b1)--(b2)--(a2)--cycle;
\foreach \i in {a,b,c}{\fill (\i1) circle (3pt);}
\foreach \i in {a,b}{\fill (\i2) circle (3pt);}
\foreach \i in {a}{\fill (\i3) circle (3pt);}
\end{tikzpicture}
\end{minipage}
\vskip .2 in
\end{center}
This triangle represents the face that is shared by all 4-cliques whose fourth vertex is represented in the product of the last element of the 4-reduction. This shading technique will be useful in Section \ref{hexagonal grids} when we consider graphs of many 4-cliques attached along triangles.

Note that 4-reductions are not limited to the form $[a,b,c,de\dotsc]$ above. Each entry may involve many products of generators. The two examples given in this section are included to illustrate the use of 4-reductions for graphs of certain \RAAGs\ we discuss in Section \ref{hexagonal grids}.

%%%%%%%%%%%%%%%%%%%%%%%%%%%%%%%%%%%%%%%%%%%%%%%%%%%%%%%%%%%%%%%%%%%%%%%%%%%%%%%%%%%%%%%%%%%%%%%%%%%%%%%%%%%%%%%%%%%%
\subsection{Surgery on dual spheres}

Consider the following construction of the connected sum of three 4-tori, each with $\pi_1$ generated by $\{x_1,x_2,x_3,x_4\}$, $\{y_1,y_2,y_3,y_4\}$, and $\{z_1, z_2, z_3, z_4\}$, as shown in Figure \ref{threetori}. 

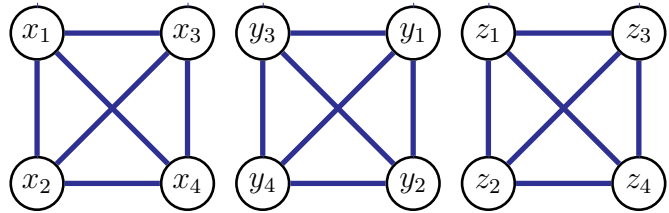
\begin{figure}[h]
\begin{center}
\begin{tikzpicture}[scale=2]
\tikzstyle{every node}=[draw, shape=circle, inner sep=.5mm, minimum size=.7cm, line width=1pt];
\path (0,1) node (x1) {$x_1$};
\path (0,0) node (x2) {$x_2$};
\path (1,1) node (x3) {$x_3$};
\path (1,0) node (x4) {$x_4$};
\path (1.5,1) node (y3) {$y_3$};
\path (1.5,0) node (y4) {$y_4$};
\path (2.5,1) node (y1) {$y_1$};
\path (2.5,0) node (y2) {$y_2$};
\path (3,1) node (z1) {$z_1$};
\path (3,0) node (z2) {$z_2$};
\path (4,1) node (z3) {$z_3$};
\path (4,0) node (z4) {$z_4$};
\foreach \i in {1,...,4}{\foreach \j in {\i,...,4}{ \draw[Blue, line width = 2pt] (x\i) -- (x\j);}}
\foreach \i in {1,...,4}{\foreach \j in {\i,...,4}{ \draw[Blue, line width = 2pt] (y\i) -- (y\j);}}	
\foreach \i in {1,...,4}{\foreach \j in {\i,...,4}{ \draw[Blue, line width = 2pt] (z\i) -- (z\j);}}
\end{tikzpicture} 
\caption{A graph representing $\pi_1(T^4\#T^4\#T^4)$}
\label{threetori}
\end{center}
\end{figure}

After surgery to identify the generators $x_3$ with $y_3$ as well as $x_4$ with $y_4$, we can find an embedded 2-sphere in the resulting 4-manifold.  View the first 4-torus as the product of two 2-tori, $x_1 \times x_2$ and $x_3 \times x_4$, and the second 4-torus as the product of $y_1\times y_2$ and $y_3\times y_4$. We can view the connected sum ambiently and after the identification surgeries, we see a 2-sphere embedded in the 4-manifold. 

Similarly, identifying $y_1$ with $z_1$ and $y_2$ with $z_2$ via surgery creates a second embedded 2-sphere. Because we can initially view the middle 4-torus as the product of two 2-tori, $y_1 \times y_2$ and $y_3\times y_4$, which intersect in exactly 1 point, so do the two embedded 2-spheres. %(See Figure \ref{EmbeddedSpheres}.) 
We will refer to such a pair of embedded 2-spheres intersecting in this way as a pair of \emph{dual 2-spheres}. 

We have already seen in Lemma \ref{surgeries} that performing surgery to identify generators of $\pi_1$ does not change $b_2$, and performing surgery to induce a commutator relation increases $b_2$ by two. By the next lemma, we can surger out a pair of dual 2-spheres without changing the fundamental group and also \emph{decrease} $b_2$ by two. 
\begin{lemma}
Suppose in 4-manifold $M$ there exist two 2-spheres intersecting exactly once with at least one embedded with trivial normal bundle. Then it is possible to remove both spheres via surgery without changing the fundamental group of $M$ and also decrease $b_2(M)$ by 2.
\label{sphere surgery}
\end{lemma}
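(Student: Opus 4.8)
The plan is to realize the operation as a single surgery on the sphere $S_1$ (the one with trivial normal bundle), using the dual sphere $S_2$ only to control the fundamental group. Write $S_1$ for the embedded sphere with trivial normal bundle, so its tubular neighborhood is $\nu(S_1) \cong S^2 \times D^2$ with $\partial \nu(S_1) \cong S^2 \times S^1$. Surgery replaces $\nu(S_1)$ by $D^3 \times S^1$, producing
\[
M' = \big(M \setminus \operatorname{int}\nu(S_1)\big) \cup_{S^2 \times S^1} \big(D^3 \times S^1\big).
\]
First I would record the effect on the Euler characteristic: since $\chi(S^2 \times D^2) = 2$ while $\chi(D^3 \times S^1) = 0$ and the gluing region $S^2 \times S^1$ has vanishing Euler characteristic, a Mayer--Vietoris count gives $\chi(M') = \chi(M) - 2$.

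Next comes the key step, where the dual sphere enters. The meridian $\mu = \{pt\} \times S^1 \subset \partial\nu(S_1)$ is the circle along which the surgery is framed, and I must show it is nullhomotopic in $M \setminus \operatorname{int}\nu(S_1)$. Because $S_2$ meets $S_1$ transversally in a single point, deleting a small disk of $S_2$ around that point leaves a once-punctured sphere --- a disk --- mapped into $M \setminus \operatorname{int}\nu(S_1)$ with boundary exactly $\mu$; this supplies the required nullhomotopy. A van Kampen computation then yields $\pi_1(M) \cong \pi_1(M \setminus \operatorname{int}\nu(S_1)) \cong \pi_1(M')$: filling in $\nu(S_1) = S^2 \times D^2$ would kill $\mu$, which is already trivial, and gluing in $D^3 \times S^1$ amalgamates $\pi_1$ over the isomorphism $\langle \mu \rangle \xrightarrow{\ \sim\ } \pi_1(D^3 \times S^1) \cong \mathbb{Z}$, hence introduces no new relations or generators.

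Combining the two steps finishes the argument. The manifold $M'$ is closed and oriented with $\pi_1(M') \cong \pi_1(M)$, so $b_1(M') = b_1(M)$; since $\chi = 2 - 2b_1 + b_2$ and $\chi$ dropped by $2$ while $b_1$ is unchanged, we conclude $b_2(M') = b_2(M) - 2$. Both spheres disappear: $S_1$ is literally excised by the surgery, and since $[S_1]\cdot[S_2] = 1$ with $[S_1]^2 = 0$ the classes $[S_1], [S_2]$ span a nondegenerate rank-$2$ summand of $H_2(M;\mathbb{Q})$, which is precisely the summand that is removed.

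I expect the main obstacle to be the $\pi_1$ bookkeeping: one must check that $\mu$ is genuinely nullhomotopic (not merely nullhomologous) in the sphere complement, and then that attaching $D^3 \times S^1$ in place of $S^2 \times D^2$ does not reintroduce fundamental group. This is exactly where the hypothesis that the spheres are geometrically dual is indispensable, since without a dual sphere such a surgery generally alters $\pi_1$. A secondary point worth verifying is that the argument requires only $S_1$ to be embedded with trivial normal bundle, while $S_2$ may be merely immersed away from the intersection point, as all it contributes is the capping disk for $\mu$.
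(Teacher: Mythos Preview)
Your proof is correct and follows essentially the same approach as the paper: surger out the embedded sphere $S_1$ with trivial normal bundle, and use the dual sphere $S_2$ to furnish a nullhomotopy of the meridian in the complement. The paper phrases the $\pi_1$ computation in handle language (the complement is recovered either by attaching a $2$-handle along a nullhomotopic curve plus a $4$-handle, or by attaching a $3$-handle plus a $4$-handle), while you run van Kampen directly and compute the drop in $b_2$ via Euler characteristic rather than by identifying the killed homology classes; these are minor expository differences, and in fact you are more explicit than the paper about \emph{why} the attaching curve is nullhomotopic.
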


\begin{proof}
Suppose $S$ is an embedded 2-sphere in a 4-manifold $M$, with self-intersection zero. Let $M' = M - S \times B^2$. Then $M$ is built from $M'$ by adding a 2-handle to a nullhomotopic curve and then adding a 4-handle. Neither handle addition changes $\pi_1$. Let $M_S$ be the resulting manifold after surgery on $S$. $M_S$ is built from $M'$ by adding a 3-handle and a 4-handle, thus $\pi_1$ remains unchanged. The homology classes of both $S$ and the second 2-sphere are killed by the surgery, thus the rank of $H_2(M;\Q)$ decreases by two. 
\end{proof}

\begin{remark}
Note that this lemma gives a slightly stronger result than what we need, since it allows for one sphere to be immersed. In practice, however, we will always use this lemma to surger out a pair of embedded dual 2-spheres.
\end{remark}

This is the only technique we will use to decrease $b_2$ in certain 4-manifolds. Moreover, for 4-manifolds with $\pi_1$ graphs of 4-cliques with more than one pair of dual spheres, in many cases we can surger many if not all pairs of embedded dual spheres to minimize $b_2$. 

\begin{example} 
Consider the setup of the following row of $k$ 4-cliques attached edge to edge, as in the graph below: 
\begin{center}
\begin{tikzpicture}
\foreach \i in {0,...,7}
{
\path (\i*1,0) coordinate (v\i);
\fill (v\i) circle (3pt);
\path (\i*1,1) coordinate (w\i);
\fill (w\i) circle (3pt);
\draw[line width = 1pt] (v\i) -- (w\i);
}
\draw[line width = 1pt] (v0)--(v5) (v6)--(v7) (w0) -- (w5) (w6)--(w7) (w6)--(v7) (v6)--(w7) (w0) -- (v1) -- (w2) -- (v3) -- (w4) -- (v5) (v0) -- (w1) -- (v2) -- (w3) -- (v4) -- (w5);
\draw[line width = 1pt, dotted] (5.2,.5) -- (5.8,.5) (5.2,0) -- (5.8,0) (5.2,1) -- (5.8,1);
\end{tikzpicture}
\end{center}

\noindent Just as before, the way to construct a 4-manifold with minimum $b_2$ is to start with the connected sum of $k$ 4-tori, and perform surgery to identify the appropriate generators of $\pi_1$. Each pair of surgeries identifying the generators of one 4-tori with another creates an embedded 2-sphere, and each sphere intersects one before it and one after it (except the first and last sphere, respectively, where they intersect a 2-torus each). Thus for $k$ 4-cliques as shown above, we have a chain of $k-1$ 2-spheres, with a 2-torus on each end. We can make $\lfloor \frac{k-1}{2}\rfloor$ pairs of dual spheres disjoint by handle slides, and thus perform $\lfloor \frac{k-1}{2}\rfloor$ surgeries on these dual sphere pairs to decrease $b_2$. 
\end{example}

The following lemma will be useful in the following section when we calculate $h$ for certain examples of \RAAGs.
\begin{lemma}
Suppose we have the following graph representing a \RAAG\ $G$:
\begin{center}
\begin{tikzpicture}
\foreach \i in {0,...,3}{
    \path (\i*1,0) coordinate (v\i); \fill (v\i) circle (3pt);
    \path (\i*1,1) coordinate (w\i); \fill (w\i) circle (3pt);
    \draw[line width = 1pt] (v\i) -- (w\i);
}
\path (1,2) coordinate (x1); \fill (x1) circle (3pt);
\path (2,2) coordinate (x2); \fill (x2) circle (3pt);
\path (1,-1) coordinate (y1); \fill (y1) circle (3pt);
\path (2,-1) coordinate (y2); \fill (y2) circle (3pt);
\draw[line width = 1pt] (w1) -- (x1) -- (x2) -- (w2) -- (x1) (w1) -- (x2) (v1) -- (y1) -- (y2) -- (v2) -- (y1)  (y2) -- (v1) (v0) -- (v3) (w0) -- (w3) (w0) -- (v1) -- (w2) -- (v3) (v0) -- (w1) -- (v2) -- (w3);
\end{tikzpicture}
\end{center}
Consider the 4-manifold, $M$, constructed by the connected sum of five 4-tori with identification surgeries so that the fundamental group has the associated graph above. Then there are two pairs of dual 2-spheres in $M$.
Moreover, we can perform the identification surgeries in such away to make the dual sphere pairs disjoint from each other and thus perform surgery on each pair, thereby decreasing $b_2(M)$ by four.
\label{cross}
\end{lemma}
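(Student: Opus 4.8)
The plan is to build $M$ as the connected sum of five 4-tori---one for each of the five 4-cliques---and to track the embedded 2-spheres produced by the identification surgeries, exactly as in the three-torus construction preceding Lemma \ref{sphere surgery}. Label the central 4-clique $B = \{v_1,w_1,v_2,w_2\}$; the remaining four cliques $A=\{v_0,w_0,v_1,w_1\}$, $C=\{v_2,w_2,v_3,w_3\}$, $X=\{w_1,w_2,x_1,x_2\}$, and $Y=\{v_1,v_2,y_1,y_2\}$ are each glued to $B$ along a single shared edge: $A$ along $v_1w_1$, $C$ along $v_2w_2$, $X$ along $w_1w_2$, and $Y$ along $v_1v_2$. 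Writing $B=T^4$ with circle factors indexed by $v_1,w_1,v_2,w_2$, these four shared edges are precisely the four coordinate 2-tori $T^2_{v_1w_1}$, $T^2_{v_2w_2}$, $T^2_{w_1w_2}$, and $T^2_{v_1v_2}$ of $B$.

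First I would form the connected sum and then perform the identification surgeries gluing each neighbor to $B$ along its shared edge. By Lemma \ref{surgeries} these surgeries leave $b_2$ unchanged and, by construction, give $\pi_1(M)$ the prescribed graph. As in the discussion preceding Lemma \ref{sphere surgery}, each gluing along a 2-torus produces an embedded 2-sphere; call them $S_A, S_C, S_X, S_Y$, with $S_\tau$ supported near the coordinate 2-torus $\tau\subset B$ along which the corresponding neighbor is attached.

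Next I would exhibit the two dual pairs by reading off the two distinct product decompositions of $B$ into complementary 2-tori. In the standard splitting $B=T^2_{v_1w_1}\times T^2_{v_2w_2}$ the factors meet transversally in one point, so $S_A$ and $S_C$ meet once; in the re-grouped splitting $B=T^2_{w_1w_2}\times T^2_{v_1v_2}$ the factors again meet once, so $S_X$ and $S_Y$ meet once. Hence $(S_A,S_C)$ and $(S_X,S_Y)$ are each pairs of dual 2-spheres in the sense of Lemma \ref{sphere surgery}, embedded with trivial normal bundle.

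The main obstacle is to see that the two pairs can be made mutually disjoint, so that both surgeries can be carried out. The difficulty is that cross-pair edges share a circle factor---for instance $T^2_{v_1w_1}$ and $T^2_{w_1w_2}$ share $S^1_{w_1}$---so a priori $S_A$ and $S_X$ could meet in a circle. The freedom I would exploit is the choice of the slice of the complementary 2-torus at which each neighbor is glued: $S_A$ sits in the slice $(\theta_{v_2},\theta_{w_2})=q_A$, $S_X$ in the slice $(\theta_{v_1},\theta_{v_2})=r_X$, and similarly for $S_C,S_Y$. A direct coordinate check shows each cross-pair intersection is non-empty only when one designated coordinate of the two attaching slices coincides (e.g. $S_A\cap S_X\neq\emptyset$ forces $q_A^{v_2}=r_X^{v_2}$), and the four such coincidence conditions involve four different coordinate comparisons. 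Choosing the four attaching slices in generic position---equivalently, separating the necks by handle slides as in the row-of-cliques example---makes all four cross-pair intersections empty while leaving each within-pair intersection a single transverse point. With the two pairs now disjoint, surgery on one pair takes place away from the other, so both survive, and applying Lemma \ref{sphere surgery} to each pair surgers out both without changing $\pi_1(M)$ and decreases $b_2$ by $2+2=4$, as claimed.
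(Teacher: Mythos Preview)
Your proof is correct and takes essentially the same approach as the paper. Where you argue by choosing the four attaching slices in generic position, the paper makes this explicit by working in the central $T^4=[0,1]^4/\!\sim$ and placing one dual pair of coordinate $2$-tori through $(0,0,0,0)$ and the other pair as push-offs through $(\tfrac12,\tfrac12,\tfrac12,\tfrac12)$; your coincidence-condition analysis then becomes an immediate coordinate check.
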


\begin{proof}
As stated in the lemma, begin with the connected sum of five 4-tori, each generated by $\{x_1,x_2,x_3,x_4\}$, $\{a_1,a_2,a_3,a_4\}$, $\{b_1,b_2,b_3,b_4\}$, $\{c_1,c_2,c_3,c_4\}$, and $\{d_1,d_2,d_3,d_4\}$. Denote by $X$ the ``middle" 4-torus generated by the $x_i$, and view $X$ as $[0,1]^4/ (0 \sim 1)$. Consider the following identifications of four submanifolds of $X$, where $x_i'$ is a push-off of $x_i$:
\begin{eqnarray*}
x_1 \times x_2 &=& [0,1]\times [0,1] \times \{0\} \times \{0\} / \sim \\
x_3 \times x_4 &=& \{0\} \times \{0\} \times [0,1] \times [0,1] / \sim\\
x_1' \times x_3' &=& [0,1] \times \left\{\frac 12\right\} \times [0,1] \times \left\{\frac12\right\} / \sim \\
x_2' \times x_4' &=& \left\{\frac 12\right\} \times [0,1] \times \left\{\frac 12\right\} \times [0,1] / \sim.
\end{eqnarray*}
We can see that $(x_1 \times x_2)\cap (x_3\times x_4) = (0,0,0,0)$ and $(x_1' \times x_3')\cap (x_2'\times x_4') = (\frac 12, \frac 12, \frac 12, \frac 12)$, but the intersections between the other pairs are empty. Thus we can perform the following identifications via surgery: $b_1=x_1$, $b_2=x_2$, $d_3 = x_3$, $d_4=x_4$, $a_1=x_1'$, $a_3 = x_3'$, $c_2 = x_2'$, and $c_4=x_4'$. After the identification surgeries, we get two distinct strings of three 4-cliques, representing the existence of two disjoint pairs of dual spheres. We can perform surgery on both of these dual sphere pairs, decreasing $b_2$ by four. 
\end{proof}

%%%%%%%%%%%%%%%%%%%%%%%%%%%%%%%%%%%%%%%%%%%%%%%%%%%%%%%%%%%%%%%%%%%%%%%%%%%%%%%%%%%%%%%%%%%%%%%%%%%%%%%%%%%%%%%%%%%%
%%%%%%%%%%%%%%%%%%%%%%%%%% EXAMPLES OF COHOMOLOGICALLY MINIMIMAL RAAGS %%%%%%%%%%%%%%%%%%%%%%%%%%%%%%%%%%%%%%%%%%%%%
%%%%%%%%%%%%%%%%%%%%%%%%%%%%%%%%%%%%%%%%%%%%%%%%%%%%%%%%%%%%%%%%%%%%%%%%%%%%%%%%%%%%%%%%%%%%%%%%%%%%%%%%%%%%%%%%%%%%
\section[Examples]{Examples of \cm\ \RAAGs} \label{infinitefamilies}

We have already seen that a \RAAG\ $G$ with trivial $H^4(G)$ is \cm. In the first three examples we focus on \RAAGs\ with trivial $H^5(G)$. We begin with graphs made up of multiple 4-cliques attached along edges, and continue with attachments along triangles, or faces. We also assume all graphs are connected and every edge belongs to at least one 4-clique, due to Theorems \ref{freeproducts} and \ref{breakdown}. Also, it should be noted that any reference to the \emph{radical} or \emph{(maximum) isotropic subspace} is in reference to those of the form (\ref{formmod2}).

%%%%%%%%%%%%%%%%%%%%%%%%%%%%%%%%%%%%%%%%%%%%%%%%%%%%%%%%%%%%%%%%%%%%%%%%%%%%%%%%%%%%%%%%%%%%%%%%%%%%%%%%%%%%%%%%%%%%
\subsection{Grids of 4-cliques sharing edges}\label{gridsection}

Consider the family of \RAAGs\ that have associated graphs composed of 4-cliques attached edge-to-edge in a grid pattern, aligned in rows and columns so that the vertices lie on a $\Z^2$ lattice. Figure \ref{planar} shows some examples. We refer to these graphs as members of the \emph{\Grid family}. 

\begin{figure}[h]
\begin{center}
\begin{tikzpicture}
\foreach \i in {0,...,4} {
	\path (\i,0) coordinate (v\i); \fill (v\i) circle (3pt);
	\path (\i,1) coordinate (w\i); \fill (w\i) circle (3pt);
	\draw[line width = 1pt] (v\i) -- (w\i);}
\foreach \i in {0,1,2} {
	\path (\i,-1) coordinate (t\i); \fill (t\i) circle (3pt);
	\path (\i,-2) coordinate (x\i); \fill (x\i) circle (3pt);
	\draw[line width = 1 pt] (v\i) -- (x\i);}
\draw[line width = 1pt] (v0)--(v4) (w0) -- (w4) (w0) -- (v1) -- (w2) -- (v3) -- (w4) (v0) -- (w1) -- (v2) -- (w3) -- (v4) (v0) -- (x2) -- (x0) -- (v2) (t0) -- (x1) -- (t2) -- (v1) -- (t0) -- (t2);
\end{tikzpicture} 
\qquad 
\begin{tikzpicture}
\foreach \i in {1,2,3} {
	\path (\i, 0) coordinate (x\i); \fill (x\i) circle (3pt);
	\path (\i, 1) coordinate (y\i); \fill (y\i) circle (3pt);
	\path (\i, 2) coordinate (z\i); \fill (z\i) circle (3pt);
	\path (\i, 3) coordinate (v\i); \fill (v\i) circle (3pt);
	\draw[line width = 1pt] (x\i) -- (v\i);}
\path (4,0) coordinate (x4); \fill (x4) circle (3pt);
\path (4,3) coordinate (v4); \fill (v4) circle (3pt);
\path (0,1) coordinate (y0); \fill (y0) circle (3pt);
\path (4,1) coordinate (y4); \fill (y4) circle (3pt);
\path (0,2) coordinate (z0); \fill (z0) circle (3pt);
\path (4,2) coordinate (z4); \fill (z4) circle (3pt);
\draw[line width = 1pt] (x1) -- (x4) (y0) -- (y4) (z0) -- (z4) (v1) -- (v4) (y0) -- (z0) -- (y1) -- (v3) -- (z4) -- (v4) -- (x1) (y0) -- (v2) -- (z3) (x4) -- (y4) -- (x3) -- (z1) (x4) -- (v1) (y1) -- (x2) -- (y3);
\end{tikzpicture} 
\qquad
\begin{tikzpicture}
\foreach \i in {1,...,4} {
	\path (\i,0) coordinate (x\i); \fill (x\i) circle (3pt);
	\path (\i,1) coordinate (y\i); \fill (y\i) circle (3pt);
	\path (\i,2) coordinate (z\i); \fill (z\i) circle (3pt);
	\path (\i,3) coordinate (v\i); \fill (v\i) circle (3pt);
	\draw[line width = 1pt] (x\i) -- (v\i);}
\foreach \i in {x,y,z,v} {\draw[line width = 1pt] (\i1) -- (\i4);}
\draw[line width = 1pt] (x1) -- (y2) (v1) -- (z2) (z3) -- (v4) (y3) -- (x4) (y1) -- (v3) -- (z4) -- (x2) -- (y1) 
(z1) -- (v2) -- (y4) -- (x3) -- (z1);
\end{tikzpicture} \\
(a) \hspace{2 in} (b) \hspace{1.5 in} (c) 
\end{center}
\caption{Examples of graphs in the Grid Family}
\label{planar}
\end{figure}
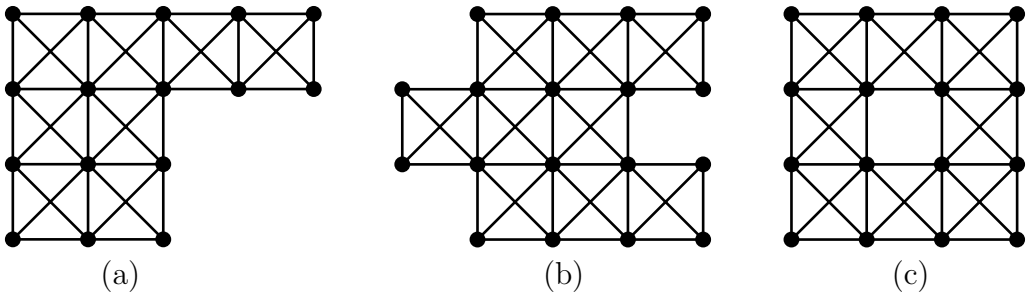

\begin{theorem}
Let $G$ be a \RAAG\ with an associated graph belonging to the \Grid family. Then $G$ is \cm.
\label{theorem grids}
\end{theorem}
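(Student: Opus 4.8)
The plan is to combine the lower bound from Proposition \ref{hnewlowerbound} with an explicit $4$-manifold construction realizing it. Since that bound reads $2b_2(G) - m_2(G) \leq h(G)$, the two quantities I must pin down are the exact value of $m_2(G)$ for a \Grid graph and a manifold $M \in \mathcal M(G)$ with $b_2(M)$ equal to this lower bound. Throughout I write $N$ for the number of $4$-cliques (unit cells of the grid), $E_s$ for the number of shared edges, and $\rho$ for the minimum dimension of the radical of the form (\ref{formmod2}). Counting edges gives $b_2(G) = 6N - E_s$, and $m_2(G) = b_2(G) - \rho$, so the target lower bound is $2b_2(G)-m_2(G) = b_2(G) + \rho$.

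First I would compute $m_2(G)$, equivalently $\rho$, directly from the grid. Taking $\alpha$ to be the sum over all $N$ cells of their top classes $\alpha_i$, the form restricted to a single cell is the nondegenerate rank-$6$ alternating pairing under which each edge $z_{ij}$ is paired with its complementary edge $z_{kl}$. Coupling between cells occurs only along shared edges: a shared edge $z_e$ pairs, through two different $\alpha_i$, with the edge opposite to it in each of the two cells containing it. As the string computation of Example \ref{m2 by hand} already illustrates, a radical element must be an alternating sum of such ``opposite-edge'' classes running along a line of consecutive shared edges, and whether such a chain closes up into a genuine radical vector is governed by the parity of the line. I would make this precise for the full lattice, showing the radical is spanned by one class for each maximal line of shared edges of the relevant parity; this pins down $\rho$ and hence $m_2(G) = 6N - E_s - \rho$.

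For the matching upper bound I would start from the connected sum of $N$ copies of $T^4$, which has $b_2 = 6N$, and perform identification surgeries gluing generators along each shared edge. By Lemma \ref{surgeries} these surgeries leave $b_2$ unchanged while producing $\pi_1 = G$. Each gluing creates an embedded $2$-sphere, and where two gluings meet they produce a pair of dual $2$-spheres; by Lemma \ref{sphere surgery} each disjoint dual pair I surger out lowers $b_2$ by $2$ without altering $\pi_1$. Thus after $s$ such surgeries $b_2(M) = 6N - 2s$, so to reach the lower bound $b_2(G) + \rho = 6N - E_s + \rho$ I must arrange exactly $s = (E_s - \rho)/2$ disjoint dual pairs. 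Along a plain strip of cells this is the string surgery described in Section \ref{tools}, while at an interior cell having neighbors on all four sides I would invoke Lemma \ref{cross}, which supplies two disjoint dual pairs in precisely that cross configuration; patching these local moves over the whole lattice should produce the required $s$.

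The hard part will be reconciling the two counts globally: the algebraic side (the radical dimension $\rho$) and the geometric side (the largest number of dual pairs that can be made \emph{simultaneously} disjoint) are each delicate, and the theorem is exactly the assertion that they agree. The interior crosses are the subtle case, since in a dense grid neighboring crosses overlap and share cells; Lemma \ref{cross} handles one cross in isolation, but globally I must choose an order of identification surgeries and a system of handle slides keeping all chosen dual pairs disjoint at once, then check that the resulting number of surgeries is exactly $(E_s - \rho)/2$ rather than something smaller. Carrying out this bookkeeping uniformly across every \Grid graph, independent of the grid's shape and parity, is where the main work lies. Once it is done, the construction gives $h(G) \leq b_2(M) = 2b_2(G) - m_2(G)$, matching Proposition \ref{hnewlowerbound}, so $h(G) = 2b_2(G)-m_2(G)$ and $G$ is \cm.
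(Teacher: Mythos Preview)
Your overall strategy coincides with the paper's: bound below by Proposition~\ref{hnewlowerbound} with $\alpha$ the sum of all cell classes, bound above by $\#N\,T^4$ with identification surgeries and dual-sphere surgeries, and match the counts. What you are missing is the one observation that dissolves the ``hard part'' you flag at the end.

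The paper's key move is to decompose the grid into its maximal horizontal strings and its maximal vertical strings and verify the equation
\[
(\text{shared edges}) - (\text{radical dimension}) \;=\; 2\,(\text{dual-sphere surgeries})
\]
\emph{separately on each string}. This works because the three quantities are each naturally partitioned by direction: in a horizontal string of cells the shared edges are all vertical, the radical element (when the string has even length) is a sum of vertical-edge classes, and the dual spheres produced arise from identifying pairs of vertical edges. The analogous statement holds for vertical strings with horizontal edges. Hence the counts coming from rows and from columns never interfere, and the global identity is just the sum of the string identities. Lemma~\ref{cross} is then exactly the statement that, at a cell lying in both a row and a column, the dual-sphere pair coming from the row and the pair coming from the column can be made disjoint; it is invoked once per interior cell rather than requiring any global patching or ordering argument. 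So the worry that ``neighboring crosses overlap and share cells'' does not arise: each cross is handled by Lemma~\ref{cross} on its own, and different crosses involve different edges by the horizontal/vertical separation.

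One further point you left implicit: you should justify that $\alpha = \sum_i \alpha_i$ actually minimizes the radical. The paper argues this by noting that dropping any $\alpha_i$ gains at most two in rank from broken parity chains but loses at least two because the two diagonal edges of the $i$th cell, which are never shared, fall into the radical.
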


\begin{proof}
Let $\Gamma$ be the graph associated to $G$ and let $k$ be the number of 4-cliques in $\Gamma$. Recall that Proposition \ref{hnewlowerbound} gives $2b_2(G) - m_2(G)\leq h(G).$ We can view this lower bound as $b_2(G)$ plus the minimum dimension of the radical. Each 4-clique has 6 edges, so clearly $b_2(G)$ is equal to 6k minus the total number of shared edges in $\Gamma$. We will show that the realizing manifold $M$ has $b_2(M)$ equal to $6k$ minus twice the number of possible dual sphere surgeries. Thus, to prove the theorem, we can show that the
\begin{equation*}
\textrm{$\#$ of shared edges} - \dim(\textrm{minimum radical}) = 2(\textrm{$\#$ possible dual 2-sphere surgeries}).
\end{equation*} 

Fortunately, it suffices to show the above equation holds separately for each linear string of 4-cliques in the graph. That is, we can consider each row and each column of $\Gamma$ separately. This is because the number of shared edges, number of basis elements of the minimum radical, and number of dual sphere surgeries in a single string of 4-cliques are additive and do not conflict with the count for other strings of 4-cliques in other rows and columns of $\Gamma$: the items counted in a horizontal string correspond only to vertical edges and vertical pairings of vertices in the string, and the items we are counting in a vertical string correspond only to horizontal edges and horizontal pairings of vertices in the string. Therefore, the separate counts will not conflict with each other. Further, Lemma \ref{cross} asserts that each of the dual sphere surgeries are possible when we consider all of $\Gamma$.

However, in splitting up the proof we must take care to use the same choice of $\alpha \in H_4(G,\Z_2)$. Fortunately, we may assume that $\alpha=\alpha_1 + \dotsc + \alpha_k$ ($c_i=0$ for all $i$) minimizes the nullity. If instead $c_i=0$ for some $i$ so that $\alpha = \alpha_1 + \dotsc + \alpha_{i-1} + \alpha_{i+1} + \dotsc +\alpha_k$, the dimension of the radical can only increase. The generator $\alpha_i$ represents a choice of the $i$th 4-clique in the graph. Every 4-clique lies in exactly one row and one column. If the $i$th 4-clique is part of a string of an even number of 4-cliques (in either direction), the elements that would be part of the radical had $c_i$ been 1 would no longer cause the form to be nondegenerate, so the count of the dimension of the radical will decrease by \emph{at most} two. However, by construction of the graph, at least two edges in every 4-clique are not shared by any other 4-clique (the two diagonal edges). The unshared edges of the $i^th$ 4-clique are now basis elements of the radical. This causes the count of the dimension to increase by \emph{at least} two. 

Now, consider a string of $\ell$ connected 4-cliques. The number of shared edges is $\ell-1$. As we saw in Example \ref{m2 by hand}, there is an element of the radical if and only if $\ell$ is even. Thus for this string, the left-hand side of the equation is $\ell - 2$ if $\ell$ is even, and $\ell -1$ if $\ell$ is odd. By Lemma \ref{sphere surgery}, we can perform $\lfloor \frac{\ell-2}{2}\rfloor$ dual 2-sphere surgeries without changing $\pi_1$. Thus, the right-hand side of the equation is $\ell-2$ if $\ell$ is even, and $\ell-1$ if $\ell$ is odd.  
\end{proof}

%%%%%%%%%%%%%%%%%%%%%%%%%%%%%%%%%%%%%%%%%%%%%%%%%%%%%%%%%%%%%%%%%%%%%%%%%%%%%%%%%%%%%%%%%%%%%%%%%%%%%%%%%%%%%%%%%%%%
\subsection{4-cliques that share faces}

Next we will consider graphs of $k$ 4-cliques that share \emph{faces}, or triangles. First, consider the family of graphs represented by strings of $k$ 4-cliques as exemplified by the graphs in Figure \ref{face sharing lines}. In (a), $k=2$; in (b), $k=3$; in (c), $k=4$; in (d), $k=5$. We call a graph of this form a member of the \String family. 

\begin{figure}[h]
\begin{center}
\begin{tikzpicture}
\foreach \i in {1,...,3}{\path (\i+1,0) coordinate (x\i);}
\foreach \i in {1,...,2}{\path (\i+1.5,1) coordinate (y\i);}
\draw[line width =1pt] (x1)--(y1)--(x2)--(y2)--(x3) (x1)--(x3) (y1)--(y2) (x1)--(y2) (y1)--(x3);
\foreach \i in {1,...,2}{\fill (x\i) circle (3pt); \fill (y\i) circle (3pt);}
\fill (x3) circle (3pt);
\end{tikzpicture}
\qquad
\begin{tikzpicture}
\foreach \i in {1,...,3}{
	\path (\i+1,0) coordinate (x\i); 
	\path (\i+1.5,1) coordinate (y\i);}
\draw[line width =1pt] (x1)--(y1)--(x2)--(y2)--(x3)--(y3) (x1)--(x3) (y1)--(y3) (x1)--(y2) (x2)--(y3) (y1)--(x3);
\foreach \i in {1,...,3}{\fill (x\i) circle (3pt); \fill (y\i) circle (3pt);}
\end{tikzpicture}
\qquad
\begin{tikzpicture}
\foreach \i in {1,...,4}{\path (\i+1,0) coordinate (x\i);}
\foreach \i in {1,...,3}{\path (\i+1.5,1) coordinate (y\i);}
\draw[line width =1pt] (x1)--(y1)--(x2)--(y2)--(x3)--(y3)--(x4) (x1)--(x4) (y1)--(y3) (x1)--(y2) (x2)--(y3) (y1)--(x3) (y2) -- (x4);
\foreach \i in {1,...,3}{\fill (x\i) circle (3pt); \fill (y\i) circle (3pt);}
\fill (x4) circle (3pt);
\end{tikzpicture}
\qquad
\begin{tikzpicture}
\foreach \i in {1,...,4}{
	\path (\i+1,0) coordinate (x\i); 
	\path (\i+1.5,1) coordinate (y\i);}
\draw[line width =1pt] (x1)--(y1)--(x2)--(y2)--(x3)--(y3)--(x4)--(y4) (x1)--(x4) (y1)--(y4) (x1)--(y2) (x2)--(y3) (x3) -- (y4) (y1)--(x3) (y2) -- (x4);
\foreach \i in {1,...,4}{\fill (x\i) circle (3pt); \fill (y\i) circle (3pt);}
\end{tikzpicture}\\
\centering{\hspace{.1 in} (a) \hspace{.9 in} (b) \hspace{1.35 in} (c) \hspace{1.35 in} (d) \hspace{.40 in}}
\caption{Graphs in the \String family}
\label{face sharing lines}
\end{center}
\end{figure}
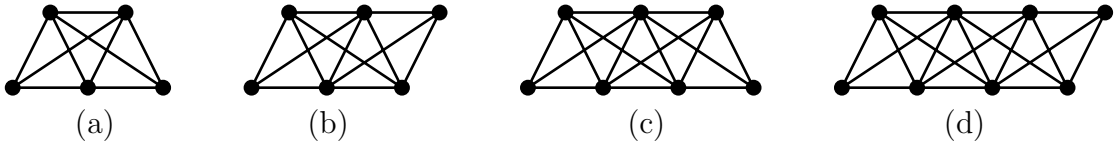

\begin{theorem}
Let G be a \RAAG\ whose associated graph is in the \String family. For $k=b_4(G)$, then
$h(G) = 3k+6$ if $k$ is even and $3k+5$ if $k$ is odd.
In particular, $G$ is \cm.
\end{theorem}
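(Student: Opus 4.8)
The plan is to verify the two claimed values against the equality $h(G) = 2b_2(G) - m_2(G)$, proving the lower bound $h(G) \geq 3k+6$ (resp.\ $3k+5$) from Proposition \ref{hnewlowerbound} after computing $m_2(G)$, and then matching it with a realizing manifold. Reading the graph off the \String picture, $G$ is the \RAAG\ on the path of $k$ tetrahedra glued consecutively along triangular faces; labelling the $k+3$ vertices $1,\dots,k+3$ so that the $i$-th $4$-clique is $C_i = \{i,i+1,i+2,i+3\}$, two vertices are adjacent exactly when their labels differ by at most $3$. Hence $b_2(G)$ is the number of edges, $3k+3$, and $b_4(G) = k$. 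The smallest case is the string of two cliques ($k=2$), which is computed in Example \ref{radical example with matrix}; I would use it as the base case and consistency check, since there $b_2 = 9$, the radical has dimension $3$, and $h = 12 = 3\cdot 2 + 6$.

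For the lower bound I would compute the radical of the form (\ref{formmod2}) exactly, in the spirit of Example \ref{m2 by hand}. By the Charney--Davis description of $H^*(G)$, each edge class $z_{ij}$ cups nontrivially, against $\alpha$, only with the complementary edge $z_{kl}$ inside a common $4$-clique $C_m$, where $\{k,l\} = C_m \setminus \{i,j\}$. Taking $\alpha = \alpha_1 + \dots + \alpha_k$ (the sum over all cliques, which minimizes the radical by the argument used in Theorem \ref{theorem grids}: dropping any $\alpha_i$ frees the unshared diagonal edge $z_{i,i+3}$ into the radical), the radical equations decouple by edge-length. The key observation is that the equation coming from each distance-$3$ edge $z_{i,i+3}$ forces the coefficient of the interior distance-$1$ edge $z_{i+1,i+2}$ to vanish; propagating this, the distance-$1$/distance-$3$ block contributes a $2$-dimensional radical supported at the two ends of the string, while the distance-$2$ edges satisfy a three-term recurrence whose only solution is an alternating sum that survives if and only if $k$ is even. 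Thus the radical has dimension $2 + [k \text{ even}]$, so $m_2(G) = 3k$ for $k$ even and $3k+1$ for $k$ odd (both even, consistent with Proposition \ref{evenform}), and Proposition \ref{hnewlowerbound} gives $h(G) \geq 3k+6$ (resp.\ $3k+5$).

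For the upper bound I would build a realizing manifold by the connected-sum-and-surgery method of Section \ref{tools}. Start with $\#\,k\,T^4$, which has $b_2 = 6k$, one $4$-torus for each clique $C_i$. Perform identification surgeries (Lemma \ref{surgeries}) gluing the copies of each vertex-generator across all tori containing that vertex; there are $4k - (k+3) = 3k-3$ such surgeries, none of which change $b_2$, and the result has $\pi_1 \cong G$. Finally, surger disjoint pairs of dual $2$-spheres created by these identifications (Lemma \ref{sphere surgery}), each reducing $b_2$ by $2$; I would show that exactly $(3k-6)/2$ pairs (resp.\ $(3k-5)/2$) can be made simultaneously disjoint and surgered, landing at $b_2 = 3k+6$ (resp.\ $3k+5$) and completing the proof that $G$ is \cm.

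The hard part will be this last step: because consecutive tetrahedra share a whole triangle (three generators) rather than a single edge, the identifications produce an intricate web of $2$-spheres that intersect one another, and I must arrange by handle slides that the correct number of them form disjoint dual pairs with trivial normal bundle. This is exactly the phenomenon handled locally by Lemma \ref{cross}, and I expect the main work to be a bookkeeping argument, carried out string-by-string as in Theorem \ref{theorem grids}, showing that the count of available disjoint dual-sphere surgeries equals the deficit $6k - h(G)$ predicted by the radical computation. By contrast, the radical computation itself is routine once the complementary-edge pairing is identified.
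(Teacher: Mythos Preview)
Your lower-bound argument computes the radical for the single class $\alpha=\sum_i \alpha_i$ and finds dimension $2+[k\text{ even}]$; that computation is correct, but it does not give the inequality you claim. Exhibiting one $\alpha$ with a given rank only shows $m_2(G)\geq$ that rank, hence gives an \emph{upper} bound on $2b_2(G)-m_2(G)$, not the lower bound on $h(G)$ that Proposition~\ref{hnewlowerbound} requires. To conclude $h(G)\geq 3k+6$ (resp.\ $3k+5$) you must show that \emph{every} $\alpha$ has rank at most $3k+1-[k\text{ even}]$, i.e.\ that your $\alpha$ is optimal; the heuristic ``dropping $\alpha_i$ frees $z_{i,i+3}$'' only compares $\sum_j\alpha_j$ to its immediate neighbours and does not rule out some other $\alpha$ having strictly smaller radical. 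The paper sidesteps this entirely by working with an explicit isotropic subspace of $H^2(G;\Z_2)$---a set of edges whose pairwise cup products vanish in $H^4$ itself, independently of $\alpha$---of dimension $\tfrac12(3k+6)$ or $\tfrac12(3k+5)$. Since such a subspace injects into $H^2(M;\Z_2)$ as an isotropic subspace of the nondegenerate intersection form for \emph{any} $M\in\mathcal M(G)$, this yields the lower bound directly without maximizing over $\alpha$.

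For the upper bound the paper takes a much shorter route than the one you propose. Rather than starting from $\#\,k\,T^4$ and attempting dual-sphere surgeries, it starts from $\#\,(k+3)\,(S^1\times S^3)$ (so $b_2=0$) and performs roughly $k/2$ carefully chosen $4$-reductions of the shape $[x_{2j}x_{2j+4},\,x_{2j+1},\,x_{2j+2},\,x_{2j+3}]$, together with one extra commutator surgery when $k$ is odd. Each $4$-reduction adds exactly $6$ to $b_2$, and the surface-like relations they create supply all of the remaining commutators for free, landing on the nose at $3k+6$ (resp.\ $3k+5$) with no sphere surgery at all. Your approach is not obviously wrong, but the ``hard part'' you flag---producing $(3k-6)/2$ disjoint dual-sphere pairs when consecutive tori share a full triangle---is genuinely delicate: adjacent shared faces overlap in an edge, so the resulting spheres are far more entangled than in the Grid setting, and neither Lemma~\ref{sphere surgery} nor Lemma~\ref{cross} covers this geometry. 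It is not clear the count comes out right without substantial new work, whereas the $4$-reduction construction avoids the issue completely.
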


\begin{proof}
We will bound $h(G)$ below by calculating the dimension of the maximum isotropic subspace in terms of $k$. We will denote this dimension by $d$. Figure \ref{face sharing lines iso} \textbf{\textcolor{blue}{highlights}} the edges of the graphs in Figure \ref{face sharing lines} which make up a maximum isotropic subspace in each case. When $k=2$ (Figure \ref{face sharing lines iso} (a)), $d=6$: two edges line the bottom of the graph, two edges are on either end of the string, and two are long diagonal edges. When $k=3$ (Figure \ref{face sharing lines iso} (b)), $d=7$: two edges line the bottom of the graph, two are end edges, and three are long diagonal edges. When $k=4$ (Figure \ref{face sharing lines iso} (c)), $d=9$: three edges line the bottom of the graph, two are end edges, and four are long diagonal edges. Following the pattern, we see that for general $k$, $d=(\lfloor\frac k 2\rfloor+1) + 2 + k$: $\lfloor\frac k2\rfloor+1$ edges line the bottom of the graph, two are end edges, and $k$ are long diagonal edges. Thus when $k$ is even, $d=\frac12(3k+6)$ and when $k$ is odd, $d=\frac12(3k+5)$. Twice this dimension $d$ yields the necessary lower bound.

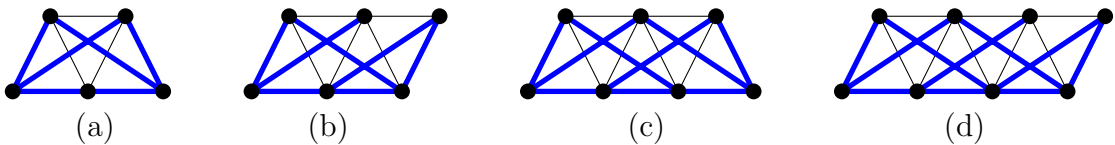
\begin{figure}[h]
\begin{center}
\begin{tikzpicture}
\foreach \i in {1,...,3}{\path (\i+1,0) coordinate (x\i);}
\foreach \i in {1,...,2}{\path (\i+1.5,1) coordinate (y\i);}
\draw(y1)--(x2)--(y2) (y1)--(y2);
\draw[blue, line width = 2pt] (x1)--(x3) (x1)--(y1) (y2)--(x3) (x1)--(y2) (y1)--(x3);
\foreach \i in {1,...,2}{\fill (x\i) circle (3pt); \fill (y\i) circle (3pt);}
\fill (x3) circle (3pt);
\end{tikzpicture}
\qquad
\begin{tikzpicture}
\foreach \i in {1,...,3}{
	\path (\i+1,0) coordinate (x\i); 
	\path (\i+1.5,1) coordinate (y\i);}
\draw (y1)--(x2)--(y2)--(x3) (y1)--(y3);
\draw[blue, line width=2pt] (x1)--(x3) (x1)--(y1) (x3)--(y3) (x1)--(y2) (x2)--(y3) (y1)--(x3);
\foreach \i in {1,...,3}{\fill (x\i) circle (3pt); \fill (y\i) circle (3pt);}
\end{tikzpicture}
\qquad
\begin{tikzpicture}
\foreach \i in {1,...,4}{\path (\i+1,0) coordinate (x\i);}
\foreach \i in {1,...,3}{\path (\i+1.5,1) coordinate (y\i);}
\draw (y1)--(x2)--(y2)--(x3)--(y3) (y1)--(y3);
\draw[blue, line width = 2pt](x1)--(x4) (x1)--(y1) (y3)--(x4) (x1)--(y2) (x2)--(y3) (y1)--(x3) (y2) -- (x4);
\foreach \i in {1,...,3}{\fill (x\i) circle (3pt); \fill (y\i) circle (3pt);}
\fill (x4) circle (3pt);
\end{tikzpicture}
\qquad
\begin{tikzpicture}
\foreach \i in {1,...,4}{
	\path (\i+1,0) coordinate (x\i); 
	\path (\i+1.5,1) coordinate (y\i);}
\draw (y1)--(x2)--(y2)--(x3)--(y3)--(x4) (y1)--(y4); 
\draw[blue, line width = 2pt] (x1)--(x4) (x1)--(y1) (x4)--(y4) (x1)--(y2) (x2)--(y3) (x3) -- (y4) (y1)--(x3) (y2) -- (x4);
\foreach \i in {1,...,4}{\fill (x\i) circle (3pt); \fill (y\i) circle (3pt);}
\end{tikzpicture}\\
\centering{\hspace{.1 in} (a) \hspace{.9 in} (b) \hspace{1.35 in} (c) \hspace{1.35 in} (d) \hspace{.40 in}}
\caption{The \textbf{\textcolor{blue}{bold}} edges of each graph form a maximal isotropic subspace.}
\label{face sharing lines iso}
\end{center}
\end{figure}

To construct realizing 4-manifolds, we will use 4-reductions applied to connected sums of $S^1 \times S^3$. If $\Gamma$ has $k$ 4-cliques, then it is not difficult to see that $b_1(G) = k+3$. We have two constructions, depending on the parity of $k$. First consider the case when $k$ is even.  Begin with the connected sum of $k+3$ copies of $S^1\times S^3$, in which $b_2 = 0$. Let $\{x_1,\dotsc, x_{k+3}\}$ be the $\pi_1$ generators of each copy of $S^1$. Perform the following $(\frac k 2 +1)$ 4-reductions:
$[x_1,x_2,x_3,x_4]$, $[x_2x_6,x_3,x_4,x_5]$, $[x_4x_8,x_5,x_6,x_7], \dotsc, [x_{k-2}x_{k+2},x_{k-1},x_k,x_{k+1}]$, $[x_k,x_{k+1},x_{k+2},x_{k+3}]$, which are shown in the graph of $\pi_1$ below:

\begin{center}
\resizebox{.95\textwidth}{!}{
\begin{tikzpicture}[scale=2.3]
\tikzstyle{every node}=[draw, shape=circle, inner sep = .5mm, minimum size=1cm, line width=1pt];
\path (0,0) node (x1) {$x_1$};
\path (1,0) node (x3) {$x_3$};
\path (2,0) node (x5) {$x_5$};
\path (3,0) node (x7) {$x_7$};
\path (0.5,1) node (x2) {$x_2$};
\path (1.5,1) node (x4) {$x_4$};
\path (2.5,1) node (x6) {$x_6$};
\path (3.5,1) node (x8) {$x_8$};
\path (5,0) node (x11) {$x_{k-1}$};
\path (6,0) node (x13) {$x_{k+1}$};
\path (7,0) node (x15) {$x_{k+3}$};
\path (4.5,1) node (x10) {$x_{k-2}$};
\path (5.5,1) node (x12) {$x_{k}$};
\path (6.5,1) node (x14) {$x_{k+2}$};
\draw[line width = 2pt, Blue] (x1)--(x2)--(x3)--(x4)--(x1)--(x3)--(x2)--(x4);
\draw[line width = 2pt, magenta] (x5)--(x3)--(x4)--(x5)--(x2) (x4)--(x6)--(x3);
\draw[line width = 2pt, Green] (x7)--(x5)--(x6)--(x7)--(x4) (x6)--(x8)--(x5);
\draw[line width = 2pt, Cerulean] (x13)--(x11)--(x12)--(x13)--(x10) (x12)--(x14)--(x11);
\draw[line width = 2pt, BurntOrange] (x7)--(x8) (x6)--(3,.7);
\draw[line width = 2pt, BurntOrange, dotted] (3,.7)--(3.4,.46);
\draw[line width= 2pt, Violet] (x11)--(x10)--(x12)--(5,.7);
\draw[line width = 2pt, Violet, dotted] (5,.7)--(4.6,.46);
\draw[line width = 2pt, red] (x15)--(x14)--(x13)--(x15)--(x12);
\draw[line width = 1pt, dotted] (x8)--(x10) (x7)--(x11) (3.5,.5)--(4.5,.5);
\end{tikzpicture}
}
\end{center}

It is left to the reader to check that these 4-reductions yield all necessary relations for the correct $\pi_1$. Recall that each 4-reduction increases $b_2$ by 6. The 4-reductions result in a 4-manifold $M$ with $\pi_1(M)=G$ and with $b_2(M)=6(\frac k 2+1) = 3k+6$, equal to the lower bound. 

Now consider the case when $k$ is odd. Again begin with the connected sum of $k+3$ copies of $S^1 \times S^3$, with the same $\pi_1$ generators $\{x_1, \dotsc, x_{k+3}\}$. Perform the following $(\frac{k-1}{2} +1)$ 4-reductions:
$[x_1,x_2,x_3,x_4]$, $[x_2x_6,x_3,x_4,x_5]$, $[x_4x_8,x_5,x_6,x_7],\dotsc, [x_{k-1}x_{k+3},x_k,x_{k+1},x_{k+2}]$, as well as surgery to induce the following commutator relation: $[x_{k+2},x_{k+3}]=1$. The relations created are shown in the graph below:

\begin{center}
\resizebox{.95\textwidth}{!}{
\begin{tikzpicture}[scale=2.3]
\tikzstyle{every node}=[draw, shape=circle, inner sep = .5mm, minimum size=1cm, line width=1pt];
\path (0,0) node (x1) {$x_1$};
\path (1,0) node (x3) {$x_3$};
\path (2,0) node (x5) {$x_5$};
\path (3,0) node (x7) {$x_7$};
\path (0.5,1) node (x2) {$x_2$};
\path (1.5,1) node (x4) {$x_4$};
\path (2.5,1) node (x6) {$x_6$};
\path (3.5,1) node (x8) {$x_8$};
\path (5,0) node (x11) {$x_{k}$};
\path (6,0) node (x13) {$x_{k+2}$};
\path (4.5,1) node (x10) {$x_{k-1}$};
\path (5.5,1) node (x12) {$x_{k+1}$};
\path (6.5,1) node (x14) {$x_{k+3}$};
\draw[line width = 2pt, Blue] (x1)--(x2)--(x3)--(x4)--(x1)--(x3)--(x2)--(x4);
\draw[line width = 2pt, magenta] (x5)--(x3)--(x4)--(x5)--(x2) (x4)--(x6)--(x3);
\draw[line width = 2pt, Green] (x7)--(x5)--(x6)--(x7)--(x4) (x6)--(x8)--(x5);
\draw[line width = 2pt, Cerulean] (x13)--(x11)--(x12)--(x13)--(x10) (x12)--(x14)--(x11);
\draw[line width = 2pt, BurntOrange] (x7)--(x8) (x6)--(3,.7);
\draw[line width = 2pt, BurntOrange, dotted] (3,.7)--(3.4,.46);
\draw[line width= 2pt, Violet] (x11)--(x10)--(x12)--(5,.7);
\draw[line width = 2pt, Violet, dotted] (5,.7)--(4.6,.46);
\draw[line width = 2pt, red] (x13)--(x14);
\draw[line width = 1pt, dotted] (x8)--(x10) (x7)--(x11) (3.5,.5)--(4.5,.5);
\end{tikzpicture}
}
\end{center}

Each 4-reduction increases $b_2$ by 6 and the commutator surgery increases $b_2$ by 2. The result is a 4-manifold $M$ with $\pi_1(M)=G$ and with $b_2(M)=6(\frac{k-1}{2}+1)+2 = 3k+5$, equal to the lower bound. 
\end{proof}

%%%%%%%%%%%%%%%%%%%%%%%%%%%%%%%%%%%%%%%%%%%%%%%%%%%%%%%%%%%%%%%%%%%%%%%%%%%%%%%%%%%%%%%%%%%%%%%%%%%%%%%%%%%%%%%%%%%%
\subsection{A hexagonal grid of 4-cliques} \label{hexagonal grids}

\begin{figure}[h]
\centering
\begin{tikzpicture}
\foreach \i in {1,...,5}{\path (\i-1,0) coordinate (a\i);}
\foreach \i in {1,...,4}{\path (\i-.5,1) coordinate (b\i);}
\foreach \i in {1,...,3}{\path (\i,2) coordinate (c\i);}
\foreach \i in {1,2}{\path (\i+.5,3) coordinate (d\i);}
\path (2,4) coordinate (e1);
\draw[line width = 1pt] (a1)--(e1)--(a5)--cycle (a2)--(d2)--(d1)--(a4)--(b4)--(b1)--cycle (a3)--(c3)--(c1)--cycle
(a1)--(c3)--(d1)--(b2)--(a4)--(c3) (e1)--(a3)--(b1)--(c2)--(b4)--(a3) (a5)--(c1)--(d2)--(b3)--(a2)--(c1);
\foreach \i in {a,b,c,d,e}{\fill (\i1) circle (3pt);}
\foreach \i in {a,b,c,d}{\fill (\i2) circle (3pt);}
\foreach \i in {a,b,c}{\fill (\i3) circle (3pt);}
\foreach \i in {a,b}{\fill (\i4) circle (3pt);}
\fill (a5) circle (3pt);		
\end{tikzpicture}
\caption{A graph in the \Hex family}
\label{tree}
\end{figure}
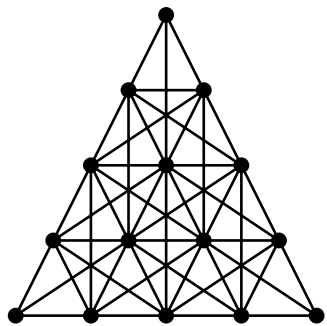

Consider the infinite family of graphs with 4-cliques attached along faces whose vertices lie in a hexagonal grid. In this setup, each triangle in the graph is not shared by more than three 3-cliques, and in each presentation of a 4-clique, the \emph{long} edge is never a shared edge. We call graphs in this family \emph{thick} if all boundary edges of the graph form an isotropic subspace. For example, Figure \ref{thick example} shows two thick 4-cliques and a \emph{thin} (not thick) 4-clique. We will call thick graphs lying in a hexagonal grid members of the \Hex family.

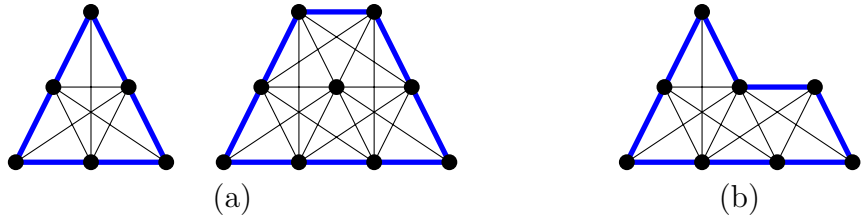
\begin{figure}[h]
\centering
\begin{minipage}{.5\linewidth}
\centering
\begin{tikzpicture}
\foreach \i in {1,...,3}{\path (\i-1,0) coordinate (a\i);}
\foreach \i in {1,2}{\path (\i-.5,1) coordinate (b\i);}
\path (1,2) coordinate (c1);
\draw[line width = 2pt,blue] (c1)--(a1)--(a3)--cycle; 
\draw (a2)--(b1)--(b2)--(a2)--(c1) (a1)--(b2) (a3)--(b1);
\foreach \i in {a,b,c}{\fill (\i1) circle (3pt);}
\foreach \i in {a,b}{\fill (\i2) circle (3pt);}
\foreach \i in {a}{\fill (\i3) circle (3pt);}
\end{tikzpicture}
\quad
\begin{tikzpicture}
\foreach \i in {1,...,4}{\path (\i-1,0) coordinate (a\i);}
\foreach \i in {1,2,3}{\path (\i-.5,1) coordinate (b\i);}
\path (1,2) coordinate (c1); \path (2,2) coordinate (c2);
\draw[line width = 2pt,blue] (c1)--(a1)--(a4)--(b3)--(c2)--cycle; 
\draw (a2)--(b1)--(b2)--(a2)--(c1)--(b2)--(c2)--(a3) (a1)--(b2) (a3)--(b1) (b2)--(a3)--(b3) (a2)--(b3) (b2)--(a4) (b2)--(b3)--(c1) (b1)--(c2);
\foreach \i in {1,...,4}{\fill (a\i) circle (3pt);}
\foreach \i in {1,2,3}{\fill (b\i) circle (3pt);}
\fill (c1) circle (3pt); \fill (c2) circle (3pt);
\end{tikzpicture} \\
(a)
\end{minipage}
\begin{minipage}{.3\linewidth}
\centering
\begin{tikzpicture}
\foreach \i in {1,...,4}{\path (\i-1,0) coordinate (a\i);}
\foreach \i in {1,2,3}{\path (\i-.5,1) coordinate (b\i);}
\path (1,2) coordinate (c1); 
\draw[line width = 2pt,blue] (c1)--(a1)--(a4)--(b3)--(b2)--cycle; 
\draw (a2)--(b1)--(b2)--(a2)--(c1) (a1)--(b2) (a3)--(b1) (b2)--(a3)--(b3) (a2)--(b3) (b2)--(a4);
\foreach \i in {1,...,4}{\fill (a\i) circle (3pt);}
\foreach \i in {1,2,3}{\fill (b\i) circle (3pt);}
\fill (c1) circle (3pt);
\end{tikzpicture} \\
(b)
\end{minipage}
\caption{The graphs in (a) are \emph{thick} and the graph in (b) is \emph{thin}.}
\label{thick example}
\end{figure}

\begin{theorem}
\RAAGs\ with associated graphs belonging to the \Hex family are \cm.
\end{theorem}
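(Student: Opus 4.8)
The plan is to combine the lower bound of Proposition~\ref{hnewlowerbound} with an explicit construction, exactly as in the \Grid and \String cases. For the lower bound I would work with a maximum isotropic subspace $W$ of the form~(\ref{formmod2}), using the fact recorded above that $2\dim W = 2b_2(G)-m_2(G)$. The thickness hypothesis is tailor-made for this: it guarantees that the boundary edges of $\Gamma$ already span an isotropic subspace. First I would take all boundary edges into $W$; then, inside each 4-clique, I would adjoin the remaining edges needed to complete a triangle or a claw, which by the analysis around Figure~\ref{isotropic} are precisely the selections that keep every pair of chosen generators cupping trivially. The content is then a combinatorial count of $\dim W$ by edge type (boundary edges, shared faces, and the unshared long edges), organized face-by-face over the hexagonal grid in the same spirit as the edge-by-edge count of Theorem~\ref{theorem grids}.

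For the upper bound I would build a realizing manifold from a connected sum of copies of $S^1\times S^3$ by a sequence of 4-reductions, using the shaded-triangle reductions developed in Section~\ref{4-reductions in action}. The hexagonal hypotheses are what make this clean: each triangular face is shared by at most three 4-cliques and the long edge of every 4-clique is unshared, so I can index the reductions by the shared faces and give each the form $[a,b,c,\,w]$, where $abc$ is a shared triangle and $w$ is the product of the apex generators of the 4-cliques meeting along that face. Each such reduction raises $b_2$ by $6$ (Kirk--Livingston) and produces the triangle $abc$ together with surface-like relations $[a,w]=[b,w]=[c,w]=1$; I would then induce the remaining individual commutators by surgery, harvesting the ``free'' relations that the surface-like relations supply whenever all but one factor of $w$ has already been pinned down.

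The two remaining tasks are to check that the resulting fundamental group is exactly $G$ and that $b_2(M)$ meets the isotropic bound. For the former, the conditions that faces are shared by at most three cliques and that long edges are never shared are precisely what prevents the surface-like relations from forcing spurious commutators, i.e.\ from creating 4-cliques not present in $\Gamma$; where linear strings of cliques occur I would clear the surplus with the dual-sphere surgeries of Lemma~\ref{sphere surgery} and Lemma~\ref{cross}. The main obstacle is the two-dimensional bookkeeping: unlike the \String and \Grid families, a general thick hexagonal grid admits no single linear order, so the real work is to exhibit a systematic choice of 4-reductions valid for every such $\Gamma$ and to prove that the global tally $6\cdot(\#\,\text{reductions})$, corrected by the free relations and by any dual-sphere surgeries, agrees with $2\dim W$ for all members of the family rather than merely for the small examples drawn above.
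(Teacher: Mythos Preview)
Your outline is close to the paper's argument, but two choices you make diverge from what actually carries the proof, and the piece you flag as ``the main obstacle'' is exactly where the paper's key idea lives.  First, the isotropic subspace is not built by ad hoc triangle/claw completions inside each 4-clique; it is simply the set of all boundary edges together with the \emph{long} diagonal edge of every 4-clique (the hypothesis that long edges are never shared is precisely what makes these pairwise isotropic).  Second, the dual-sphere surgeries of Lemmas~\ref{sphere surgery} and~\ref{cross} play no role here: once you start from $\#_{b_1}(S^1\times S^3)$ and proceed by 4-reductions, there are no embedded dual spheres to remove, and none are needed.

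The paper avoids any global two-dimensional bookkeeping by a purely \emph{local} balance.  Shade one triangle of each small hexagonal cell in a fixed orientation; each shaded triangle determines one 4-reduction $[a,b,c,w]$, where $w$ is the product of the (one, two, or three) apex vertices of the 4-cliques containing that triangle.  The cost $+6$ of a reduction of type $[a,b,c,def]$ is balanced by the three long diagonals of the three surrounding 4-cliques; for $[a,b,c,de]$ by two long diagonals and the one boundary edge of the shaded triangle; for $[a,b,c,d]$ by one long diagonal and its two boundary edges.  The residual boundary edges not covered by any shaded triangle are exactly the commutators that must be killed by ordinary surgery, each costing $+2$ and balanced by that same boundary edge in $W$.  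This local matching shows simultaneously that the isotropic subspace above is maximal and that the construction realizes the bound, so there is no separate global count to perform.
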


\begin{proof}
To prove this theorem, we will first discuss a how to find an isotropic subspace from a graph in the \Hex family in order to bound $h$ from below, and then show this lower bound can be realized by a 4-manifold constructed from 4-reductions and surgeries. 

Consider an arbitrary graph in the \Hex family. Since the graph is thick, all boundary edges form an isotropic subspace. Additionally, since every long edge of each 4-clique is not a shared edge, we can add it to the isotropic subspace. As an example, consider the graph in Figure \ref{large face example} (a), which we will denote by $\Gamma$. 

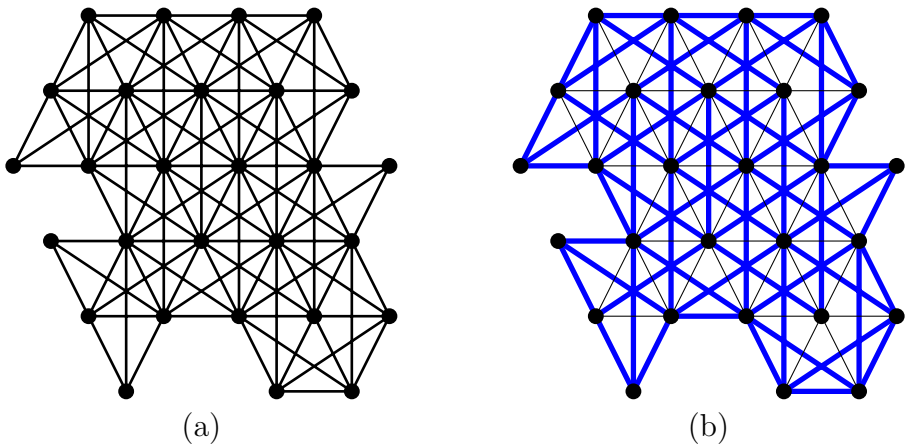
\begin{figure}[h]
\centering
\begin{minipage}{.4\linewidth}
\centering
\begin{tikzpicture}
\foreach \i in {2,4,5}{\path (\i-0.5,0) coordinate (a\i);}
\foreach \i in {1,...,5}{
	\path (\i,1) coordinate (b\i);
	\path (\i-0.5,2) coordinate (c\i);
	\path (\i-0.5,4) coordinate (e\i);
	}
\foreach \i in {0,...,5}{\path (\i,3) coordinate (d\i);}
\foreach \i in {0,...,4}{\path (\i,5) coordinate (f\i);}
\draw[line width = 1pt] (a2)--(c1)--(c2)--(d1)--(d0)--(f1)--(f4)--(e5)--(d4)--(d5)--(c5)--(b5)--(a5)--(a4)--(b3)--(b2)--(a2)
(b1)--(d4)--(f1)--(d1)--(f4)--(d4) (c2)--(a5)--(c5)--(e2)--(f3)--(e5)--(c2)--(a2) (c1)--(b2)--(d5) (d0)--(e2)--(c2) (e1)--(b5)--(a4)--(e4)--(f2)--(e1) (f2)--(b2) (f3)--(b3)--(c5) (d1)--(b4)--(d4) (d2)--(e4) (c3)--(e3) (a4)--(c5)--(f3)--(c2)--(b2)--(b1)--(c2)--(c5) (a5)--(f2)--(d1)--(d4)--(b3)--(f1) (b3)--(b5) (d1)--(e1)--(e5)
(b2)--(f4);
\fill (d0) circle (3pt);	
\foreach \i in {b,c,d,e,f}{\fill (\i1) circle (3pt);}
\foreach \i in {a,b,c,d,e,f}{\fill (\i2) circle (3pt);}
\foreach \i in {b,c,d,e,f}{\fill (\i3) circle (3pt);}
\foreach \i in {a,b,c,d,e,f}{\fill (\i4) circle (3pt);}
\foreach \i in {a,b,c,d,e}{\fill (\i5) circle (3pt);}
\end{tikzpicture}\\
(a)
\end{minipage}
\begin{minipage}{.4\linewidth}
\centering
\begin{tikzpicture}
\foreach \i in {2,4,5}{\path (\i-0.5,0) coordinate (a\i);}
\foreach \i in {1,...,5}{
	\path (\i,1) coordinate (b\i);
	\path (\i-0.5,2) coordinate (c\i);
	\path (\i-0.5,4) coordinate (e\i);
	}
\foreach \i in {0,...,5}{\path (\i,3) coordinate (d\i);}
\foreach \i in {0,...,4}{\path (\i,5) coordinate (f\i);}
\draw[line width = 2pt, blue] (a2)--(c1)--(c2)--(d1)--(d0)--(f1)--(f4)--(e5)--(d4)--(d5)--(c5)--(b5)--(a5)--(a4)--(b3)--(b2)--(a2);
\draw[line width = 2pt, blue] (b1)--(d4)--(f1)--(d1)--(f4)--(d4) (c2)--(a5)--(c5)--(e2)--(f3)--(e5)--(c2)--(a2)
(c1)--(b2)--(d5) (d0)--(e2)--(c2) (e1)--(b5)--(a4)--(e4)--(f2)--(e1) (f2)--(b2) (f3)--(b3)--(c5) (d1)--(b4)--(d4)
(d2)--(e4) (c3)--(e3);
\draw (a4)--(c5)--(f3)--(c2)--(b2)--(b1)--(c2)--(c5) (a5)--(f2)--(d1)--(d4)--(b3)--(f1) (b3)--(b5) (d1)--(e1)--(e5)
(b2)--(f4);
\fill (d0) circle (3pt);	
\foreach \i in {b,c,d,e,f}{\fill (\i1) circle (3pt);}
\foreach \i in {a,b,c,d,e,f}{\fill (\i2) circle (3pt);}
\foreach \i in {b,c,d,e,f}{\fill (\i3) circle (3pt);}
\foreach \i in {a,b,c,d,e,f}{\fill (\i4) circle (3pt);}
\foreach \i in {a,b,c,d,e}{\fill (\i5) circle (3pt);}
\end{tikzpicture}  \\
(b)
\end{minipage}
\caption{(a) A graph in the \Hex family as well as (b) the edges that form an isotropic subspace}
\label{large face example}
\end{figure}

The boundary edges and the long diagonal edges of every 4-clique in $\Gamma$, \textbf{\textcolor{blue}{highlighted}} in Figure \ref{large face example} (b), form an isotropic subspace. Later, we will see that this isotropic subspace is a maximum.

For any arbitrary graph in the \Hex Family, we will construct a realizing 4-manifold as follows. Begin with the connected sum of $b_1$ copies of $S^1\times S^3$. We will need to perform both 4-reductions on these generators as well as commutator surgeries in order for the 4-manifold to have the correct $\pi_1$. Since it is intractible to give an arbitrary graph a set of generators and list the necessary 4-reductions and commutator surgeries, we will instead describe the pattern in which one can determine the surgeries from our example graph $\Gamma$. First note that all necessary 4-reductions will be of the form $[a,b,c,d]$, $[a,b,c,de]$, or $[a,b,c,def]$, where $a,b,c,d,e,$ and $f$ represent $\pi_1$ generators. In Section \ref{4-reductions in action} we discussed the useful technique of shading a triangle in the graph bounded by the edges between $a$, $b$, and $c$. Figure \ref{shading example} shows two possible yet equally sufficient constructions of a realizing 4-manifold $M$ that has an associated $\pi_1$ graph $\Gamma$.

\begin{figure}[h]
\centering
\begin{minipage}{.4\linewidth}
\centering
\begin{tikzpicture}
\foreach \i in {2,4,5}{\path (\i-0.5,0) coordinate (a\i);}
\foreach \i in {1,...,5}{
	\path (\i,1) coordinate (b\i);
	\path (\i-0.5,2) coordinate (c\i);
	\path (\i-0.5,4) coordinate (e\i);
	}
\foreach \i in {0,...,5}{\path (\i,3) coordinate (d\i);}
\foreach \i in {0,...,4}{\path (\i,5) coordinate (f\i);}
\draw[line width = 2pt, Violet] (a2)--(c1)--(c2)--(d1) (f1)--(f4) (e5)--(d4)--(d5)--(c5) (b5)--(a5) (a4)--(b3) (b2)--(a2);
\filldraw[OliveGreen, pattern=dots, pattern color=OliveGreen, line width = 1pt] (d0)--(e1)--(d1)--cycle;
\filldraw[OliveGreen, pattern=dots, pattern color=OliveGreen, line width = 1pt] (e1)--(f1)--(e2)--cycle;
\filldraw[OliveGreen, pattern=dots, pattern color=OliveGreen, line width = 1pt] (e2)--(f2)--(e3)--cycle;
\filldraw[OliveGreen, pattern=dots, pattern color=OliveGreen, line width = 1pt] (e3)--(f3)--(e4)--cycle;
\filldraw[OliveGreen, pattern=dots, pattern color=OliveGreen, line width = 1pt] (e4)--(f4)--(e5)--cycle;
\filldraw[OliveGreen, pattern=dots, pattern color=OliveGreen, line width = 1pt] (d1)--(e2)--(d2)--cycle;
\filldraw[OliveGreen, pattern=dots, pattern color=OliveGreen, line width = 1pt] (d2)--(e3)--(d3)--cycle;
\filldraw[OliveGreen, pattern=dots, pattern color=OliveGreen, line width = 1pt] (d3)--(e4)--(d4)--cycle;
\filldraw[OliveGreen, pattern=dots, pattern color=OliveGreen, line width = 1pt] (c2)--(d2)--(c3)--cycle;
\filldraw[OliveGreen, pattern=dots, pattern color=OliveGreen, line width = 1pt] (c3)--(d3)--(c4)--cycle;
\filldraw[OliveGreen, pattern=dots, pattern color=OliveGreen, line width = 1pt] (c4)--(d4)--(c5)--cycle;
\filldraw[OliveGreen, pattern=dots, pattern color=OliveGreen, line width = 1pt] (b1)--(c2)--(b2)--cycle;
\filldraw[OliveGreen, pattern=dots, pattern color=OliveGreen, line width = 1pt] (b2)--(c3)--(b3)--cycle;
\filldraw[OliveGreen, pattern=dots, pattern color=OliveGreen, line width = 1pt] (b3)--(c4)--(b4)--cycle;
\filldraw[OliveGreen, pattern=dots, pattern color=OliveGreen, line width = 1pt] (b4)--(c5)--(b5)--cycle;
\filldraw[OliveGreen, pattern=dots, pattern color=OliveGreen, line width = 1pt] (a4)--(b4)--(a5)--cycle;
\fill (d0) circle (3pt);	
\foreach \i in {b,c,d,e,f}{\fill (\i1) circle (3pt);}
\foreach \i in {a,b,c,d,e,f}{\fill (\i2) circle (3pt);}
\foreach \i in {b,c,d,e,f}{\fill (\i3) circle (3pt);}
\foreach \i in {a,b,c,d,e,f}{\fill (\i4) circle (3pt);}
\foreach \i in {a,b,c,d,e}{\fill (\i5) circle (3pt);}
\end{tikzpicture}\\
(a)
\end{minipage}
\begin{minipage}{.4\linewidth}
\centering
\begin{tikzpicture}
\foreach \i in {2,4,5}{\path (\i-0.5,0) coordinate (a\i);}
\foreach \i in {1,...,5}{
	\path (\i,1) coordinate (b\i);
	\path (\i-0.5,2) coordinate (c\i);
	\path (\i-0.5,4) coordinate (e\i);
	}
\foreach \i in {0,...,5}{\path (\i,3) coordinate (d\i);}
\foreach \i in {0,...,4}{\path (\i,5) coordinate (f\i);}
\draw[line width = 2pt, Violet] (d1)--(d0)--(f1) (f4)--(e5) (c5)--(b5) (a5)--(a4) (b3)--(b2);	
\filldraw[Magenta, pattern=dots, pattern color=Magenta, line width = 1pt] (f1)--(e2)--(f2)--cycle;
\filldraw[Magenta, pattern=dots, pattern color=Magenta, line width = 1pt] (f2)--(e3)--(f3)--cycle;
\filldraw[Magenta, pattern=dots, pattern color=Magenta, line width = 1pt] (f3)--(e4)--(f4)--cycle;
\filldraw[Magenta, pattern=dots, pattern color=Magenta, line width = 1pt] (e1)--(d1)--(e2)--cycle;
\filldraw[Magenta, pattern=dots, pattern color=Magenta, line width = 1pt] (e2)--(d2)--(e3)--cycle;
\filldraw[Magenta, pattern=dots, pattern color=Magenta, line width = 1pt] (e3)--(d3)--(e4)--cycle;
\filldraw[Magenta, pattern=dots, pattern color=Magenta, line width = 1pt] (e4)--(d4)--(e5)--cycle;
\filldraw[Magenta, pattern=dots, pattern color=Magenta, line width = 1pt] (d1)--(c2)--(d2)--cycle;
\filldraw[Magenta, pattern=dots, pattern color=Magenta, line width = 1pt] (d2)--(c3)--(d3)--cycle;
\filldraw[Magenta, pattern=dots, pattern color=Magenta, line width = 1pt] (d3)--(c4)--(d4)--cycle;
\filldraw[Magenta, pattern=dots, pattern color=Magenta, line width = 1pt] (d4)--(c5)--(d5)--cycle;
\filldraw[Magenta, pattern=dots, pattern color=Magenta, line width = 1pt] (c1)--(b1)--(c2)--cycle;
\filldraw[Magenta, pattern=dots, pattern color=Magenta, line width = 1pt] (c2)--(b2)--(c3)--cycle;
\filldraw[Magenta, pattern=dots, pattern color=Magenta, line width = 1pt] (c3)--(b3)--(c4)--cycle;
\filldraw[Magenta, pattern=dots, pattern color=Magenta, line width = 1pt] (c4)--(b4)--(c5)--cycle;
\filldraw[Magenta, pattern=dots, pattern color=Magenta, line width = 1pt] (b1)--(a2)--(b2)--cycle;
\filldraw[Magenta, pattern=dots, pattern color=Magenta, line width = 1pt] (b3)--(a4)--(b4)--cycle;
\filldraw[Magenta, pattern=dots, pattern color=Magenta, line width = 1pt] (b4)--(a5)--(b5)--cycle;
\fill (d0) circle (3pt);	
\foreach \i in {b,c,d,e,f}{\fill (\i1) circle (3pt);}
\foreach \i in {a,b,c,d,e,f}{\fill (\i2) circle (3pt);}
\foreach \i in {b,c,d,e,f}{\fill (\i3) circle (3pt);}
\foreach \i in {a,b,c,d,e,f}{\fill (\i4) circle (3pt);}
\foreach \i in {a,b,c,d,e}{\fill (\i5) circle (3pt);}
\end{tikzpicture} \\
(b)
\end{minipage}
\caption{Two constructions for a realizing 4-manifold for a graph in the \Hex family}
\label{shading example}
\end{figure}
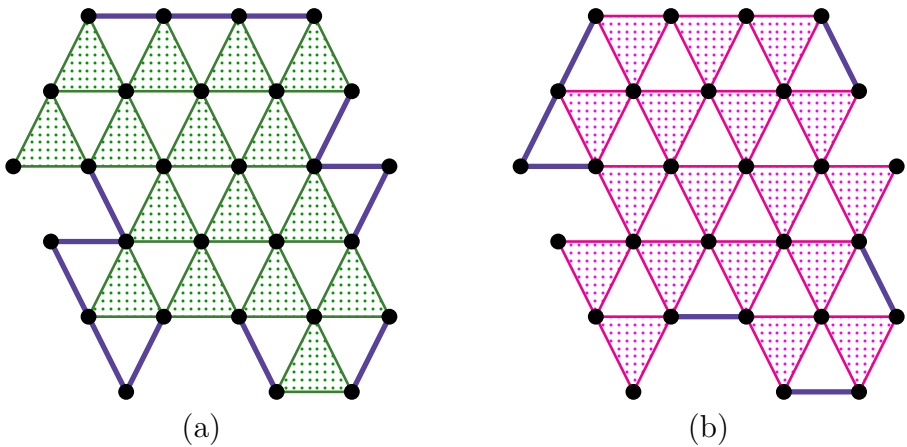

In these constructions, the number of shaded triangles in the graph corresponds the number of necessary 4-reductions. The vertices on a particular shaded triangle correspond to three of the four elements of the 4-reduction. The fourth element of the 4-reduction is either another generator or a product of generators, depending on how many 4-cliques share the face of the shaded triangle. Each \textbf{\textcolor{Violet}{bold}} edge in Figure \ref{shading example} corresponds to a necessary commutator surgery that will ensure the resulting 4-manifold will have the correct $\pi_1$. Note that all these \textbf{\textcolor{Violet}{bold}} edges are boundary edges of the graph which are not covered by any of the shaded triangles.

What remains to be seen is that this construction is ``good enough." That is, given a lower bound calculated from twice the dimension of the isotropic subspace described above, we can always construct a 4-manifold with $b_2$ equal to this lower bound by following this 4-manifold construction pattern. To do this, we will break down this construction pattern and show piece-by-piece that the cost of each 4-reduction and each surgery (in terms of adding $b_2$) can be balanced out by elements in the isotropic subspace. More specifically, we need only see that the cost $x$ of each construction (in terms of adding $b_2$) can be balanced by finding half as many ($\frac 12x$) elements in the isotropic subspace.

To begin, let us discuss the costs of the three necessary types of 4-reductions: $[a,b,c,d]$, $[a,b,c,de]$, and $[a,b,c,def]$. First notice that any shaded triangle in the graph that is not along the boundary is created by a 4-reduction of the form $[a,b,c,def]$. The vertices representing $a$, $b$, and $c$ in each case are vertices of the shaded triangle, and the vertices representing $d$, $e$, and $f$ are the fourth vertices of each respective 4-clique that shares the shaded triangle. Examples of 4-reductions of the type $[a,b,c,def]$ can be viewed in $\Gamma$ below: 
\begin{center}
\begin{minipage}{.4\linewidth}
\centering
\begin{tikzpicture}[scale=.9]
\foreach \i in {2,4,5}{\path (\i-0.5,0) coordinate (a\i);}
\foreach \i in {1,...,5}{
	\path (\i,1) coordinate (b\i);
	\path (\i-0.5,2) coordinate (c\i);
	\path (\i-0.5,4) coordinate (e\i);
	}
\foreach \i in {0,...,5}{\path (\i,3) coordinate (d\i);}
\foreach \i in {0,...,4}{\path (\i,5) coordinate (f\i);}
\draw[Gray] (a2)--(c1)--(c2)--(d1)--(d0)--(f1)--(f4)--(e5)--(d4)--(d5)--(c5)--(b5)--(a5)--(a4)--(b3)--(b2)--(a2) (b1)--(d4)--(f1)--(d1)--(f4)--(d4) (c2)--(a5)--(c5)--(e2)--(f3)--(e5)--(c2)--(a2) (c1)--(b2)--(d5)
(d0)--(e2)--(c2) (e1)--(b5)--(a4)--(e4)--(f2)--(e1) (f2)--(b2) (f3)--(b3)--(c5) (d1)--(b4)--(d4) (d2)--(e4) (c3)--(e3) (a4)--(c5)--(f3)--(c2)--(b2)--(b1)--(c2)--(c5) (a5)--(f2)--(d1)--(d4)--(b3)--(f1) (b3)--(b5) (d1)--(e1)--(e5) (b2)--(f4);
\draw[line width =2pt, dashed, rounded corners=20pt,red] (-0.75,2.5)--(1,6)--(2.75,2.5)--cycle;
\draw[line width =2pt, dashed, rounded corners=20pt,red] (2.25,.5)--(4,4)--(5.75,.5)--cycle;
\filldraw[Magenta, pattern=dots, pattern color= Magenta, line width = 1pt] (d1)--(e1)--(e2)--cycle;
\filldraw[Magenta, pattern=dots, pattern color= Magenta, line width = 1pt] (c4)--(c5)--(b4)--cycle;
\fill (d0) circle (3pt);	
\foreach \i in {b,c,d,e,f}{\fill (\i1) circle (3pt);}
\foreach \i in {a,b,c,d,e,f}{\fill (\i2) circle (3pt);}
\foreach \i in {b,c,d,e,f}{\fill (\i3) circle (3pt);}
\foreach \i in {a,b,c,d,e,f}{\fill (\i4) circle (3pt);}
\foreach \i in {a,b,c,d,e}{\fill (\i5) circle (3pt);}
\end{tikzpicture}
\end{minipage}
\bigskip
\begin{minipage}{.4\linewidth} 
\centering
\begin{tikzpicture}[scale=.9]
\foreach \i in {2,4,5}{\path (\i-0.5,0) coordinate (a\i);}
\foreach \i in {1,...,5}{
	\path (\i,1) coordinate (b\i);
	\path (\i-0.5,2) coordinate (c\i);
	\path (\i-0.5,4) coordinate (e\i);
	}
\foreach \i in {0,...,5}{\path (\i,3) coordinate (d\i);}
\foreach \i in {0,...,4}{	\path (\i,5) coordinate (f\i);}
\path (1,6) coordinate (x);	% used only to shift image down	
\draw[Gray] (a2)--(c1)--(c2)--(d1)--(d0)--(f1)--(f4)--(e5)--(d4)--(d5)--(c5)--(b5)--(a5)--(a4)--(b3)--(b2)--(a2) (b1)--(d4)--(f1)--(d1)--(f4)--(d4) (c2)--(a5)--(c5)--(e2)--(f3)--(e5)--(c2)--(a2) (c1)--(b2)--(d5)
(d0)--(e2)--(c2) (e1)--(b5)--(a4)--(e4)--(f2)--(e1) (f2)--(b2) (f3)--(b3)--(c5) (d1)--(b4)--(d4) (d2)--(e4) (c3)--(e3) (a4)--(c5)--(f3)--(c2)--(b2)--(b1)--(c2)--(c5) (a5)--(f2)--(d1)--(d4)--(b3)--(f1) (b3)--(b5) (d1)--(e1)--(e5) (b2)--(f4);
\draw[line width =2pt, dashed, rounded corners=20pt,red] (2.25,3.5)--(4,0)--(5.75,3.5)--cycle;
\filldraw[OliveGreen, pattern=dots, pattern color= OliveGreen, line width = 1pt] (d4)--(c4)--(c5)--cycle;
\fill (d0) circle (3pt);	
\foreach \i in {b,c,d,e,f}{\fill (\i1) circle (3pt);}
\foreach \i in {a,b,c,d,e,f}{\fill (\i2) circle (3pt);}
\foreach \i in {b,c,d,e,f}{\fill (\i3) circle (3pt);}
\foreach \i in {a,b,c,d,e,f}{\fill (\i4) circle (3pt);}
\foreach \i in {a,b,c,d,e}{\fill (\i5) circle (3pt);}
\end{tikzpicture}
\end{minipage}
\end{center}
\medskip
Individually, each 4-reduction $[a,b,c,def]$ will eventually result in twelve commutator relations, those represented by the edges in Figure \ref{4-reduction type 3} (a). Automatically, the relations $[a,b]=1$, $[a,c]=1$, and $[b,c]=1$ are created, represented by the edges in (b). The other surface-like relations (for example, $[a,def]=1$) will be resolved by other 4-reductions and/or commutator surgeries. However, the three relations represented by the long diagonal edges highlighted in (c), are only induced by this 4-reduction once the outer edges of (a) are created. 

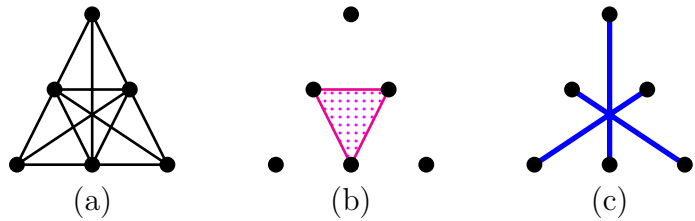
\begin{figure}[h]
\begin{center}
\begin{minipage}{.2\linewidth}
\centering
\begin{tikzpicture}
\foreach \i in {1,...,3}{\path (\i-1,0) coordinate (a\i);}
\foreach \i in {1,2}{\path (\i-.5,1) coordinate (b\i);}
\path (1,2) coordinate (c1);
\draw[line width = 1pt] (c1)--(a1)--(a3)--cycle (a2)--(b1)--(b2)--(a2)--(c1) (a1)--(b2) (a3)--(b1);
\foreach \i in {a,b,c}{\fill (\i1) circle (3pt);}
\foreach \i in {a,b}{\fill (\i2) circle (3pt);}
\foreach \i in {a}{\fill (\i3) circle (3pt);}
\end{tikzpicture} \\
(a)
\end{minipage}
\begin{minipage}{.2\linewidth}
\centering
\begin{tikzpicture}
\foreach \i in {1,...,3}{\path (\i-1,0) coordinate (a\i);}
\foreach \i in {1,2}{\path (\i-.5,1) coordinate (b\i);}
\path (1,2) coordinate (c1);
\filldraw[Magenta, pattern=dots, pattern color= Magenta, line width = 1pt] (b1)--(b2)--(a2)--cycle;
\foreach \i in {a,b,c}{\fill (\i1) circle (3pt);}
\foreach \i in {a,b}{\fill (\i2) circle (3pt);}
\foreach \i in {a}{\fill (\i3) circle (3pt);}
\end{tikzpicture} \\
(b)
\end{minipage}
\begin{minipage}{.2\linewidth}
\centering
\begin{tikzpicture}
\foreach \i in {1,...,3}{\path (\i-1,0) coordinate (a\i);}
\foreach \i in {1,2}{\path (\i-.5,1) coordinate (b\i);}
\path (1,2) coordinate (c1);
\draw[line width = 2pt, blue] (a2)--(c1) (a1)--(b2) (a3)--(b1);
\foreach \i in {a,b,c}{\fill (\i1) circle (3pt);}
\foreach \i in {a,b}{\fill (\i2) circle (3pt);}
\foreach \i in {a}{\fill (\i3) circle (3pt);}
\end{tikzpicture} \\
(c)
\end{minipage}
\end{center}
\caption{The edges created by a 4-reduction of type $[a,b,c,def]$, (b) the triangle created by $a$, $b$, and $c$, and (c) the edges belonging to the maximal isotropic subspace}
\label{4-reduction type 3}
\end{figure}

Each 4-reduction adds 6 to the total $b_2$ of the 4-manifold. This addition is balanced out by three edges that represent basis elements in the isotropic subspace. These three edges are the long diagonal edges of the three 4-cliques created by this 4-reduction, shown in (c). Any remaining relations are induced by other 4-reductions and/or commutator surgeries and their costs are balanced elsewhere.

Next, notice that any shaded triangle in the graph that has one edge along the boundary is created by a 4-reduction of the form $[a,b,c,de]$. As in the previous case, the vertices representing $a$, $b$, and $c$ are vertices of the shaded triangle, and the vertices representing $d$ and $e$ are the fourth vertices of the two respective 4-cliques that share the shaded triangle. Examples of 4-reductions of the type $[a,b,c,de]$ can be viewed in $\Gamma$ below: 
\begin{center}
\begin{minipage}{.4\linewidth}
\centering
\begin{tikzpicture}[scale=.9]
\foreach \i in {2,4,5}{\path (\i-0.5,0) coordinate (a\i);}
\foreach \i in {1,...,5}{
	\path (\i,1) coordinate (b\i);
	\path (\i-0.5,2) coordinate (c\i);
	\path (\i-0.5,4) coordinate (e\i);
	}
\foreach \i in {0,...,5}{\path (\i,3) coordinate (d\i);}
\foreach \i in {0,...,4}{	\path (\i,5) coordinate (f\i);}
\path (1,6) coordinate (x);	% used only to shift image down
\draw[Gray] (a2)--(c1)--(c2)--(d1)--(d0)--(f1)--(f4)--(e5)--(d4)--(d5)--(c5)--(b5)--(a5)--(a4)--(b3)--(b2)--(a2) (b1)--(d4)--(f1)--(d1)--(f4)--(d4) (c2)--(a5)--(c5)--(e2)--(f3)--(e5)--(c2)--(a2) (c1)--(b2)--(d5)
(d0)--(e2)--(c2) (e1)--(b5)--(a4)--(e4)--(f2)--(e1) (f2)--(b2) (f3)--(b3)--(c5) (d1)--(b4)--(d4) (d2)--(e4) (c3)--(e3) (a4)--(c5)--(f3)--(c2)--(b2)--(b1)--(c2)--(c5) (a5)--(f2)--(d1)--(d4)--(b3)--(f1) (b3)--(b5) (d1)--(e1)--(e5) (b2)--(f4);
\draw[line width =2pt, dashed, rounded corners=20pt, red] (2.75,-0.5) -- (4.5,3) -- (5.5,1) -- (4.75,-0.5) -- cycle;
\filldraw[Magenta, pattern=dots, pattern color= Magenta, line width = 1pt] (a5)--(b4)--(b5)--cycle;
\fill (d0) circle (3pt);	
\foreach \i in {b,c,d,e,f}{\fill (\i1) circle (3pt);}
\foreach \i in {a,b,c,d,e,f}{\fill (\i2) circle (3pt);}
\foreach \i in {b,c,d,e,f}{\fill (\i3) circle (3pt);}
\foreach \i in {a,b,c,d,e,f}{\fill (\i4) circle (3pt);}
\foreach \i in {a,b,c,d,e}{\fill (\i5) circle (3pt);}
\end{tikzpicture}
\end{minipage}
\begin{minipage}{.4\linewidth}
\centering
\begin{tikzpicture}[scale=.9]
\foreach \i in {2,4,5}{\path (\i-0.5,0) coordinate (a\i);}
\foreach \i in {1,...,5}{
	\path (\i,1) coordinate (b\i);
	\path (\i-0.5,2) coordinate (c\i);
	\path (\i-0.5,4) coordinate (e\i);
	}
\foreach \i in {0,...,5}{\path (\i,3) coordinate (d\i);}
\foreach \i in {0,...,4}{	
	\path (\i,5) coordinate (f\i);
	}
\path (1,5.5) coordinate (x);	% used only to shift image down
\draw[Gray] (a2)--(c1)--(c2)--(d1)--(d0)--(f1)--(f4)--(e5)--(d4)--(d5)--(c5)--(b5)--(a5)--(a4)--(b3)--(b2)--(a2) (b1)--(d4)--(f1)--(d1)--(f4)--(d4) (c2)--(a5)--(c5)--(e2)--(f3)--(e5)--(c2)--(a2) (c1)--(b2)--(d5)
(d0)--(e2)--(c2) (e1)--(b5)--(a4)--(e4)--(f2)--(e1) (f2)--(b2) (f3)--(b3)--(c5) (d1)--(b4)--(d4) (d2)--(e4) (c3)--(e3) (a4)--(c5)--(f3)--(c2)--(b2)--(b1)--(c2)--(c5) (a5)--(f2)--(d1)--(d4)--(b3)--(f1) (b3)--(b5) (d1)--(e1)--(e5) (b2)--(f4);
\draw[line width =2pt, dashed, rounded corners=20pt, red] (1.75,0.5) -- (0.75,2.45) --(4.25,2.45)--(3.25,0.5)--cycle;	
\filldraw[OliveGreen, pattern=dots, pattern color= OliveGreen, line width = 1pt] (c3)--(b2)--(b3)--cycle;
\draw[line width = 2pt, dashed, rounded corners = 20pt, red] (2.25,5.5) -- (4,2) -- (5,4) -- (4.25,5.5) -- cycle;
\filldraw[OliveGreen, pattern=dots, pattern color= OliveGreen, line width = 1pt] (f4)--(e4)--(e5)--cycle;
\fill (d0) circle (3pt);	
\foreach \i in {b,c,d,e,f}{\fill (\i1) circle (3pt);}
\foreach \i in {a,b,c,d,e,f}{\fill (\i2) circle (3pt);}
\foreach \i in {b,c,d,e,f}{\fill (\i3) circle (3pt);}
\foreach \i in {a,b,c,d,e,f}{\fill (\i4) circle (3pt);}
\foreach \i in {a,b,c,d,e}{\fill (\i5) circle (3pt);}
\end{tikzpicture}
\end{minipage} 
\end{center}
\medskip
Each 4-reduction of the form $[a,b,c,de]$ will eventually result in nine commutator relations, represented by the edges in Figure \ref{4-reduction type 2} (a). Again, we see the triangle in (b) represents the relations $[a,b]=1$, $[a,c]=1$, and $[b,c]=1$. The two long diagonal edges in (c) will be resolved by other 4-reductions and/or commutator surgeries.

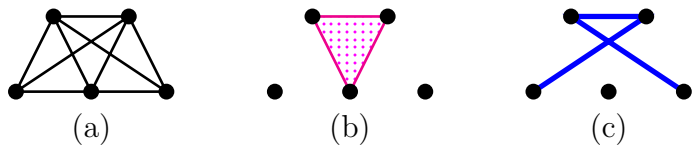
\begin{figure}[h]
\begin{center}
\begin{minipage}{.2\linewidth}
\centering
\begin{tikzpicture}
\foreach \i in {1,2}{
	\path (\i-1,0) coordinate (a\i);
	\path (\i-.5,1) coordinate (b\i);
	}
\path (2,0) coordinate (a3);
\draw[line width=1pt] (a1)--(a3)--(b2)--(b1)--(a2)--(b2)--(a1)--(b1)--(a3);
\foreach \i in {1,...,3}{\fill (a\i) circle (3pt);}
\foreach \i in {1,2}{\fill (b\i) circle (3pt);}
\end{tikzpicture} \\
(a)
\end{minipage}
\begin{minipage}{.2\linewidth}
\centering
\begin{tikzpicture}
\foreach \i in {1,2}{
	\path (\i-1,0) coordinate (a\i);
	\path (\i-.5,1) coordinate (b\i);
	}
\path (2,0) coordinate (a3);
\filldraw[Magenta, pattern=dots, pattern color= Magenta, line width = 1pt] (b1)--(b2)--(a2)--cycle;
\foreach \i in {1,...,3}{\fill (a\i) circle (3pt);}
\foreach \i in {1,2}{\fill (b\i) circle (3pt);}
\end{tikzpicture} \\
(b)
\end{minipage}
\begin{minipage}{.2\linewidth}
\centering
\begin{tikzpicture}
\foreach \i in {1,2}{
	\path (\i-1,0) coordinate (a\i);
	\path (\i-.5,1) coordinate (b\i);
	}
\path (2,0) coordinate (a3);
\draw[line width=2pt,blue] (a1)--(b2)--(b1)--(a3);
\foreach \i in {1,...,3}{\fill (a\i) circle (3pt);}
\foreach \i in {1,2}{\fill (b\i) circle (3pt);}
\end{tikzpicture} \\
(c)
\end{minipage}
\end{center}
\caption{The edges created by a 4-reduction of type $[a,b,c,de]$, (b) the triangle created by $a$, $b$, and $c$, and (c) the edges belonging to the maximal isotropic subspace}
\label{4-reduction type 2}
\end{figure}

Each 4-reduction of this form still adds 6 to the total $b_2$ of the 4-manifold. This addition is balanced out by three edges that represent basis elements in the isotropic subspace, the two long diagonal edges and the boundary edge in (c). Any remaining relations are induced by other 4-reductions and/or commutator surgeries and their costs are balanced elsewhere.

The last type of 4-reduction, $[a,b,c,d]$, occurs when the shaded triangle has two boundary edges. In this case, there is only one 4-clique in the graph containing the shaded triangle, the one formed by vertices representing $a$, $b$, $c$, and $d$. Examples of 4-reductions of this type can be viewed in $\Gamma$ below: 
\begin{center}
\begin{minipage}{.4\linewidth}
\centering
\begin{tikzpicture}[scale=.9]
\foreach \i in {2,4,5}{\path (\i-0.5,0) coordinate (a\i);}
\foreach \i in {1,...,5}{
	\path (\i,1) coordinate (b\i);
	\path (\i-0.5,2) coordinate (c\i);
	\path (\i-0.5,4) coordinate (e\i);
	}
\foreach \i in {0,...,5}{\path (\i,3) coordinate (d\i);}
\foreach \i in {0,...,4}{	\path (\i,5) coordinate (f\i);}
\path (1,5.3) coordinate (x);	% used only to shift image down
\draw[Gray] (a2)--(c1)--(c2)--(d1)--(d0)--(f1)--(f4)--(e5)--(d4)--(d5)--(c5)--(b5)--(a5)--(a4)--(b3)--(b2)--(a2) (b1)--(d4)--(f1)--(d1)--(f4)--(d4) (c2)--(a5)--(c5)--(e2)--(f3)--(e5)--(c2)--(a2) (c1)--(b2)--(d5)
(d0)--(e2)--(c2) (e1)--(b5)--(a4)--(e4)--(f2)--(e1) (f2)--(b2) (f3)--(b3)--(c5) (d1)--(b4)--(d4) (d2)--(e4) (c3)--(e3) (a4)--(c5)--(f3)--(c2)--(b2)--(b1)--(c2)--(c5) (a5)--(f2)--(d1)--(d4)--(b3)--(f1) (b3)--(b5) (d1)--(e1)--(e5) (b2)--(f4);
\draw[line width = 2pt, dashed, red, shift={(1.5,0)}] (0,1) ellipse [x radius = .9cm, y radius = 1.3cm];
\filldraw[Magenta, pattern=dots, pattern color= Magenta, line width = 1pt] (a2)--(b1)--(b2)--cycle;
\draw[line width = 2pt, dashed, red, shift={(3.5,2)}] (33: 0.9cm) ellipse [x radius = 1.3cm, y radius = .9cm, rotate=33];
\filldraw[Magenta, pattern=dots, pattern color= Magenta, line width = 1pt] (c5)--(d4)--(d5)--cycle;
\fill (d0) circle (3pt);	
\foreach \i in {b,c,d,e,f}{\fill (\i1) circle (3pt);}
\foreach \i in {a,b,c,d,e,f}{\fill (\i2) circle (3pt);}
\foreach \i in {b,c,d,e,f}{\fill (\i3) circle (3pt);}
\foreach \i in {a,b,c,d,e,f}{\fill (\i4) circle (3pt);}
\foreach \i in {a,b,c,d,e}{\fill (\i5) circle (3pt);}
\end{tikzpicture}
\end{minipage}
\begin{minipage}{.4\linewidth}
\centering
\begin{tikzpicture}[scale=.9]
\foreach \i in {2,4,5}{\path (\i-0.5,0) coordinate (a\i);}
\foreach \i in {1,...,5}{
	\path (\i,1) coordinate (b\i);
	\path (\i-0.5,2) coordinate (c\i);
	\path (\i-0.5,4) coordinate (e\i);
	}
\foreach \i in {0,...,5}{\path (\i,3) coordinate (d\i);}
\foreach \i in {0,...,4}{\path (\i,5) coordinate (f\i);}
%\path (1,6) coordinate (x);	% used only to shift image down
\draw[Gray] (a2)--(c1)--(c2)--(d1)--(d0)--(f1)--(f4)--(e5)--(d4)--(d5)--(c5)--(b5)--(a5)--(a4)--(b3)--(b2)--(a2) (b1)--(d4)--(f1)--(d1)--(f4)--(d4) (c2)--(a5)--(c5)--(e2)--(f3)--(e5)--(c2)--(a2) (c1)--(b2)--(d5)
(d0)--(e2)--(c2) (e1)--(b5)--(a4)--(e4)--(f2)--(e1) (f2)--(b2) (f3)--(b3)--(c5) (d1)--(b4)--(d4) (d2)--(e4) (c3)--(e3) (a4)--(c5)--(f3)--(c2)--(b2)--(b1)--(c2)--(c5) (a5)--(f2)--(d1)--(d4)--(b3)--(f1) (b3)--(b5) (d1)--(e1)--(e5) (b2)--(f4);
\draw[line width = 2pt, dashed, red, shift={(0,3)}] (33: 0.9cm) ellipse [x radius = 1.3cm, y radius = .9cm, rotate=33];
\filldraw[OliveGreen, pattern=dots, pattern color= OliveGreen, line width = 1pt] (d0)--(d1)--(e1)--cycle;
\fill (d0) circle (3pt);	
\foreach \i in {b,c,d,e,f}{\fill (\i1) circle (3pt);}
\foreach \i in {a,b,c,d,e,f}{\fill (\i2) circle (3pt);}
\foreach \i in {b,c,d,e,f}{\fill (\i3) circle (3pt);}
\foreach \i in {a,b,c,d,e,f}{\fill (\i4) circle (3pt);}
\foreach \i in {a,b,c,d,e}{\fill (\i5) circle (3pt);}
\end{tikzpicture}
\end{minipage} 
\end{center}
\medskip
Each 4-reduction of the form $[a,b,c,d]$ induces 6 commutator relations, shown in Figure \ref{4-reduction type 1} (a) and adds 6 to the total $b_2$ of the 4-manifold. This addition is balanced out by three edges that represent basis elements in the isotropic subspace, the single long diagonal edge of the 4-clique and the two boundary edges shown in (c). 

\begin{figure}[h]
\begin{center}
\begin{minipage}{.2\linewidth}
\centering
\begin{tikzpicture}
\foreach \i in {1,2}{
	\path (\i-1,0) coordinate (a\i);
	\path (\i-.5,1) coordinate (b\i);
	}
\draw[line width=1pt] (a1)--(a2)--(b2)--(b1)--(a2) (b1)--(a1)--(b2);
\foreach \i in {1,2}{
	\fill (a\i) circle (3pt);
	\fill (b\i) circle (3pt);
	}
\end{tikzpicture} \\
(a)
\end{minipage}
\begin{minipage}{.2\linewidth}
\centering
\begin{tikzpicture}
\foreach \i in {1,2}{
	\path (\i-1,0) coordinate (a\i);
	\path (\i-.5,1) coordinate (b\i);
	}
\filldraw[OliveGreen, pattern=dots, pattern color= OliveGreen, line width = 1pt] (a1)--(a2)--(b1)--cycle;
\foreach \i in {1,2}{
	\fill (a\i) circle (3pt);
	\fill (b\i) circle (3pt);
	}
\end{tikzpicture} \\
(b) 
\end{minipage}
\begin{minipage}{.2\linewidth}
\centering
\begin{tikzpicture}
\foreach \i in {1,2}{
	\path (\i-1,0) coordinate (a\i);
	\path (\i-.5,1) coordinate (b\i);
	}
\draw[line width=2pt,blue] (a1)--(b2) (b1)--(a1)--(a2);
\foreach \i in {1,2}{
	\fill (a\i) circle (3pt);
	\fill (b\i) circle (3pt);
	}
\end{tikzpicture} \\
(c)
\end{minipage} 
\end{center}
\caption{The edges created by a 4-reduction of type $[a,b,c,d]$, (b) the triangle created by $a$, $b$, and $c$, and (c) the edges belonging to the maximal isotropic subspace}
\label{4-reduction type 1}
\end{figure}
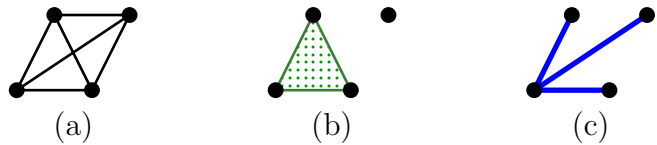

Lastly, we will consider the cost of the commutator surgeries. Each commutator surgery induces a relation that represents a boundary edge of the graph, and the cost of the surgery (an addition of 2 to $b_2$) is balanced out by the fact that the corresponding boundary edge in the graph is in the isotropic subspace. 

Since the cost of each 4-reduction and each surgery are balanced by elements in the isotropic subspace, it is clear that the pattern exemplified by Figure \ref{large face example} (b) yields a maximum dimensional isotropic subspace and the construction pattern in Figure \ref{shading example} yields a realizing 4-manifold. This also shows that, interestingly, either pattern in Figure \ref{shading example} is sufficient to construct a realizing manifold.
\end{proof}

%%%%%%%%%%%%%%%%%%%%%%%%%%%%%%%%%%%%%%%%%%%%%%%%%%%%%%%%%%%%%%%%%%%%%%%%%%%%%%%%%%%%%%%%%%%%%%%%%%%%%%%%%%%%%%%%%%%%
\subsection{\RAAGs\ with nontrivial higher cohomology}\label{higher cohomology}

In graph theory, the \emph{dimension} of a graph refers to the dimension of the largest clique in the graph. In terms of the cohomology of \RAAGs, it is the largest nonzero cohomological dimension. Until now, we have only considered \RAAGs\ of dimension 4. There are many reasons for this. 

Right-angled Artin groups of dimension 4 are special, as 4 is the first dimension in which the cohomology ring really has an interesting influence on the possible values of $b_2(M)$ for arbitrary $M\in \mathcal M(G)$. Determining $h$ is a delicate problem in groups of dimension 4 because calculations of $m_2$ as well as realizing manifold constructions are completely dependent on the ways in which 4-cliques interact in the graph. This provides evidence that the difficulty in determining the minimum $b_2$ problem of \RAAGs\ lies in this dimension. 

We now restrict the discussion to graphs of dimension $k$ in which all $(k-1)$-cliques in the graphs are subgraphs of a $k$-clique. Let us say graphs under this restriction have \emph{pure} dimension $k$. The next theorem gives a result for a family of \cm\ \RAAGs\ of pure dimension 5.

\begin{theorem}
\label{theorem 5-cliques string}
Let $G$ be a \RAAG\ with an associated graph containing $k$ 5-cliques attached edge-to-edge as in Figure \ref{string of 5-cliques}. Then $h(G) = 12k+2$. In particular, $G$ is \cm.
\end{theorem}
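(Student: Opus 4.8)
The plan is to prove the two matching inequalities $2b_2(G)-m_2(G)\le h(G)$ from Proposition~\ref{hnewlowerbound} and $h(G)\le 12k+2$ from an explicit construction, and to pin both to $12k+2$ by computing $b_2(G)=9k+1$ and $m_2(G)=6k$. First I would record the combinatorics: a string of $k$ $5$-cliques glued along single edges has $5+3(k-1)=3k+2$ vertices, $10k-(k-1)=9k+1$ edges, and $5k$ four-cliques, since consecutive cliques meet only in an edge and so share no four-clique. Thus $b_1(G)=3k+2$ and $b_2(G)=9k+1$, and by the Charney--Davis description the cup product $H^2(G;\Z_2)\times H^2(G;\Z_2)\to H^4(G;\Z_2)$ is supported within individual $5$-cliques: two edges cup to the class of the four-clique they span exactly when they are vertex-disjoint inside a common $5$-clique, and cup to zero otherwise.

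For the lower bound I would establish $m_2(G)=6k$ in two halves. For $m_2(G)\ge 6k$ I take $\alpha=\sum_j\alpha_j$, the sum over all $5k$ four-cliques, and compute its radical directly. Restricted to one $5$-clique this is the disjointness form on the $10$ edges of $K_5$, whose radical consists of those edge-vectors summing to zero on every triangle, i.e.\ the cut space of $K_5$, of dimension $5-1=4$. Gluing the $k$ cliques forces the cut on clique $i$ and the cut on clique $i+1$ to agree on their shared edge, one nontrivial linear condition per shared edge, so the radical has dimension $4k-(k-1)=3k+1$ and the rank is $6k$. For the reverse bound $m_2(G)\le 6k$ I argue by induction on $k$, with base case $m_2(\Z^5)=6$ (equivalently $h(\Z^5)=14$, the exceptional value of Kirk--Livingston \cite{KirkLivingston05}). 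Deleting an end clique, attached along an edge $s$, writes any form $B_\alpha$ for $G$ as a sum $A+B$ of two matrices overlapping only in the row and column indexed by $s$, where $A$ is a form for the shorter string $G_{k-1}$ and $B$ is a $\Z^5$-form for the new clique; subadditivity of rank gives $\mathrm{rank}(B_\alpha)\le 6(k-1)+6=6k$ for every $\alpha$. Hence $m_2(G)=6k$, and Proposition~\ref{hnewlowerbound} yields $h(G)\ge 2(9k+1)-6k=12k+2$.

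For the upper bound I would build a realizing manifold from $\#(3k+2)\,(S^1\times S^3)$, which has free fundamental group and $b_2=0$, using $2k$ four-reductions and a single commutator surgery. Within each $5$-clique, the two four-reductions on the $4$-subsets obtained by deleting each of the two endpoints of its shared edge cover all ten commutators except that shared edge: deleting a vertex leaves the six pairs avoiding it, and the two complementary choices miss only the pair of deleted endpoints. For every clique after the first that missing pair is the edge already imposed by the previous clique, so only the first clique requires one extra commutator surgery to realize its one uncovered edge. Each four-reduction increases $b_2$ by $6$ \cite{KirkLivingston05} and the single commutator surgery increases it by $2$ (Lemma~\ref{surgeries}), giving $b_2(M)=6(2k)+2=12k+2$; because every reduction lies inside a single clique, no commutators between vertices of different cliques are created and $\pi_1(M)\cong G$. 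Together with the lower bound this gives $h(G)=12k+2=2b_2(G)-m_2(G)$, so $G$ is \cm.

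I expect the main obstacle to be the exact determination $m_2(G)=6k$, and in particular the correct handling of the shared-edge coordinate: the cut-space computation (which forces the radical to be $3k+1$ for the distinguished $\alpha$) must be matched to the rank-subadditivity induction (which caps every $\alpha$ at $6k$) so that the shared edge is neither double-counted nor dropped, and the whole induction rests on the nontrivial base case $m_2(\Z^5)=6$ coming from the exceptional $\Z^5$ calculation. By comparison the manifold construction is routine once one sees that two four-reductions inside a $5$-clique cover all but one prescribed edge, the only point needing care being the verification that these reductions introduce no spurious cross-clique relations.
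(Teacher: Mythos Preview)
Your proof is correct and follows a genuinely different route from the paper's.

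For the lower bound, the paper exhibits a specific $\alpha$ (two four-cliques per $5$-clique, not all five) and writes down an explicit $(3k+1)$-element radical basis; its text is somewhat opaque about why this bounds $m_2$ from \emph{above}. Your inductive argument via rank subadditivity, $\mathrm{rank}(B_\alpha)\le m_2(G_{k-1})+m_2(\Z^5)\le 6(k-1)+6$, is both cleaner and logically tighter: it directly gives $m_2(G)\le 6k$, which is exactly the inequality needed in Proposition~\ref{hnewlowerbound}. Your cut-space computation for the all-four-cliques class is an attractive alternative picture, but note that the argument as written only shows that the compatible-cuts subspace is \emph{contained} in the radical, hence rank $\le 6k$ for that $\alpha$; the reverse containment (needed for ``$m_2\ge 6k$'') would require a bit more. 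This is harmless, since once $m_2\le 6k$ and $h(G)\le 12k+2$ are established, $m_2=6k$ follows a posteriori from Proposition~\ref{hnewlowerbound}.

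For the upper bound, the paper builds the realizing manifold from $2\,T^4\ \#\ (k-1)(T^2\times\Sigma_2)$ together with identification surgeries and $k$ commutator surgeries. Your construction instead starts from $\#(3k+2)(S^1\times S^3)$ and uses $2k$ four-reductions (the two $4$-subsets of each $5$-clique obtained by deleting the endpoints of the shared edge with the previous clique) plus a single commutator surgery on the first shared edge. Both give $b_2=12k+2$; yours is more combinatorial and arguably easier to verify, while the paper's fits the surface-product machinery developed in Section~\ref{tools}. The one point to state carefully in your write-up is which shared edge you delete endpoints of in an interior clique (the previous one), so that the only uncovered commutator is the first shared edge.
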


\begin{figure}[H]
\centering
\begin{tikzpicture}
\path (0.5,0) coordinate (x1);
\path (1.5,0) coordinate (x5);
\path (3.5,0) coordinate (x7);
\path (4.5,0) coordinate (x11);
\path (6.5,0) coordinate (x13);
\path (7.5,0) coordinate (x17);
\path (9.5,0) coordinate (x19);
\path (0,1) coordinate (x2);
\path (2,1) coordinate (x4);
\path (3,1) coordinate (x8);
\path (5,1) coordinate (x10);
\path (6,1) coordinate (x14);
\path (8,1) coordinate (x16);
\path (9,1) coordinate (x20);
\path (1,1.7) coordinate (x3);
\path (4,1.7) coordinate (x9);
\path (7,1.7) coordinate (x15);
\path (2.5,-0.7) coordinate (x6);
\path (5.5,-0.7) coordinate (x12);
\path (8.5,-0.7) coordinate (x18);
\foreach \i in {1,...,5}{\foreach \j in {\i,...,5}{\draw[line width = 1pt] (x\i)--(x\j);}}
\foreach \i in {4,...,8}{\foreach \j in {\i,...,8}{\draw[line width = 1pt] (x\i)--(x\j);}}
\foreach \i in {7,...,11}{\foreach \j in {\i,...,11}{\draw[line width = 1pt] (x\i)--(x\j);}}
\foreach \i in {10,...,14}{\foreach \j in {\i,...,14}{\draw[line width = 1pt] (x\i)--(x\j);}}
\foreach \i in {13,...,17}{\foreach \j in {\i,...,17}{\draw[line width = 1pt] (x\i)--(x\j);}}
\foreach \i in {16,...,20}{\foreach \j in {\i,...,20}{\draw[line width = 1pt] (x\i)--(x\j);}}
\foreach \i in {1,...,20}{\fill (x\i) circle (3pt);}
\end{tikzpicture}
\caption{A graph of 5-cliques attached edge-to-edge}
\label{string of 5-cliques}
\end{figure}
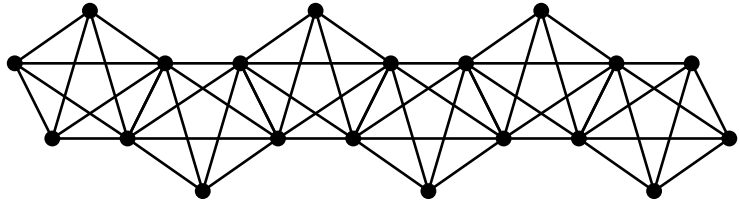

\begin{proof} 
For large $k$, computing $m_2(G)$ is impractical; since $b_4(G) = 5k$, computing $2^{5k}$ ranks using a computer program is too time consuming. However, if we compute $m_2(G)$ for $k=1,\dotsc,4$ we discover a pattern. The table below shows the calculations of the lower bound coming from the cohomology ring of $G$:
\begin{table}[H]
\centering
\begin{tabular}{|c|c|c|c|}
\hline $k$ & $b_2$ & $m_2$ & $2b_2-m_2$ \\
\hline 1 & 10 & 6 & 14 \\
\hline 2 & 19 & 12 & 26 \\
\hline 3 & 28 & 18 & 38 \\
\hline 4 & 37 & 24 & 50 \\
\hline $\vdots$ & $\vdots$ & $\vdots$ & $\vdots$ \\
\hline $k$ & $9k+1$ & $6k$ & $12k+2$ \\
\hline
\end{tabular}
\end{table}
The calculation of $b_2$ is easy to see: each 5-clique has 10 edges,  and $k-1$ edges of the graph are shared; therefore, $b_2=10k-(k-1)=9k+1$. Fortunately, there is a way to prove that the pattern for $m_2(G)$ continues for $k$ larger than 4. To see that $m_2(G)=6k$, one can find a sufficient lower bound for the dimension of the radical of (\ref{formmod2}). This will yield an upper bound for $m_2(G)$ and thus a lower bound for $h(G)$. In fact, we need only find a choice of $\alpha \in H_4(G;\Z_2)$ such that the dimension of the radical is $3k+1$. If the dimension of the radical is bounded below by $3k+1$, then the rank of the form is bounded above by $6k$. Thus, $2(9k+1)-6k = 12k+2\leq h(G)$. We will see that for each $k$, a 4-manifold $M$ can be constructed with $b_2(M)=12k+2$, which will guarantee that $m_2(G)=6k$ and that $M$ is a realizing manifold. 

A graph $G$ with $k$ 5-cliques attached edge-to-edge will have $3k+2$ vertices, $\{s_1,\dotsc, s_{3k+2} \}$.
We can label the vertices in a graph as shown in Figure \ref{labeling vertices 5-cliques}.
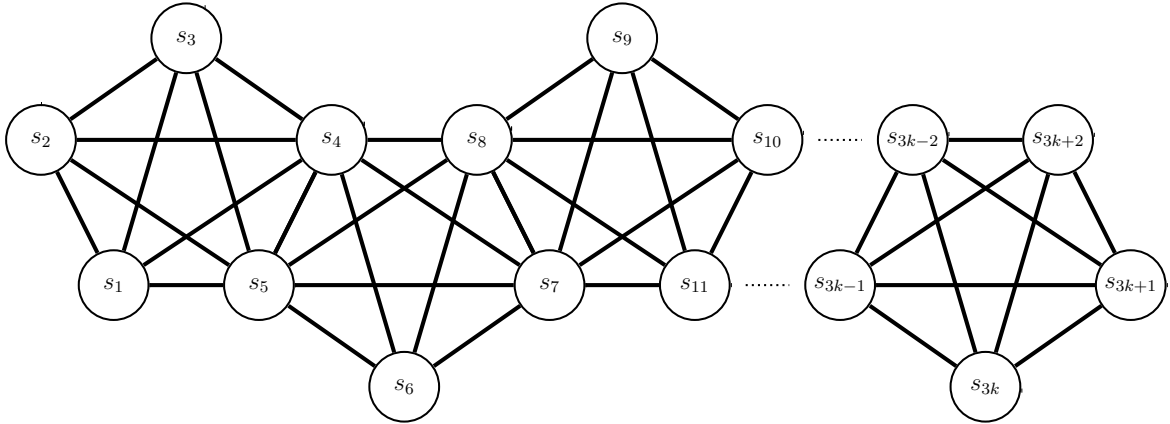
\begin{figure}[h]
\begin{center}
\resizebox{.95\textwidth}{!}{
\begin{tikzpicture}[scale=2.5]
\tikzstyle{every node}=[draw, shape=circle, inner sep=.5mm, minimum size=1.2cm, line width=1pt];
\path (0.5,0) node (x1) {$s_{1}$};
\path (1.5,0) node (x5) {$s_{5}$};
\path (3.5,0) node (x7){$s_{7}$};
\path (4.5,0) node (x11){$s_{11}$};
\path (0,1) node (x2){$s_{2}$};
\path (2,1) node (x4){$s_{4}$};
\path (3,1) node (x8){$s_{8}$};
\path (5,1) node (x10){$s_{10}$};
\path (1,1.7) node (x3){$s_{3}$};
\path (4,1.7) node (x9){$s_{9}$};
\path (2.5,-0.7) node (x6){$s_{6}$};
\path (6.5,-0.7)node (x12){$s_{3k}$}; 
\path (7.5,0) node (x13) {$s_{3k+1}$}; 
\path (7,1) node (x14) {$s_{3k+2}$}; 
\path (5.5,0) node (x15) {$s_{3k-1}$}; 
\path (6,1) node (x16) {$s_{3k-2}$};
\foreach \i in {1,...,5}{\foreach \j in {\i,...,5}{\draw[line width = 2pt] (x\i)--(x\j);}}
\foreach \i in {4,...,8}{\foreach \j in {\i,...,8}{\draw[line width = 2pt] (x\i)--(x\j);}}
\foreach \i in {7,...,11}{\foreach \j in {\i,...,11}{\draw[line width = 2pt] (x\i)--(x\j);}}
\foreach \i in {12,...,16}{\foreach \j in {\i,...,16}{\draw[line width = 2pt] (x\i)--(x\j);}}
\draw[dotted, line width = 1pt] (5.35,1)--(5.65,1) (4.85,0)--(5.15,0);
\end{tikzpicture}
}
\end{center}
\caption{A graph of $k$ 5-cliques attached edge-to-edge}
\label{labeling vertices 5-cliques}
\end{figure}
Consider the following ordering of the basis elements for $H^2(G;\Z_2)$: 
\begin{eqnarray*}
\{z_{12},\ z_{13},\ z_{14},\ z_{15},\ z_{23},\ z_{24},\ z_{25},\ z_{34},\ z_{35},\ z_{45},\ z_{46},\ z_{47},\dotsc,\ z_{(3k+1)(3k+2)} \} 
\end{eqnarray*}
For a choice of $\alpha \in H_4(G;\Z_2)$, we can find a basis for the radical of (\ref{formmod2}), as we did in Example \ref{radical example with matrix}:

If $k=1$ and $\alpha =s_{1234}+ s_{2345}$ then the following 4 elements form a basis for the radical: 
$\{z_{12}+z_{25},\ z_{13} + z_{35},\ z_{14}+z_{45},\ z_{15} \}$. One can see this by verifying that each element cupped with an arbitrary generator $z\in H^2(G; \Z_2)$ and evaluated on the class $\alpha$ is $0\bmod 2$. For example, $\langle (z_{12}+z_{25})\cup z, s_{1234}+ s_{2345} \rangle = \langle z_{12}\cup z, s_{1234} \rangle + \langle z_{25}\cup z,s_{2345} \rangle$ is equal to 0 for all $z\neq z_{34}$ and equal to $0\bmod 2$ for $z=z_{34}$. The edges represented by the radical's basis\footnote{In the case that a basis element is a sum of generators of $H^2(G;\Z_2)$, the edges of each generator are \textbf{\textcolor{blue}{highlighted}} in the graph} are \textbf{\textcolor{blue}}{highlighted} in Figure \ref{5-cliques radical k=1}.
\begin{figure}[h]
\begin{center}
\begin{minipage}{.2\linewidth} \centering
\begin{tikzpicture}
\path (0.5,0) coordinate (x1); \path (0,1) coordinate (x2); \path (1,1.7) coordinate (x3); \path (2,1) coordinate (x4); \path (1.5,0) coordinate (x5);
\foreach \i in {1,...,5}{\foreach \j in {\i,...,5}{\draw (x\i)--(x\j);}}
\draw[line width = 2pt, blue] (x1)--(x2)--(x5);
\foreach \i in {1,...,5}{\fill (x\i) circle (3pt);}
\end{tikzpicture} \\
$z_{12} + z_{25}$
\end{minipage}
\begin{minipage}{.2\linewidth} \centering
\begin{tikzpicture}
\path (0.5,0) coordinate (x1); \path (0,1) coordinate (x2); \path (1,1.7) coordinate (x3); \path (2,1) coordinate (x4); \path (1.5,0) coordinate (x5);
\foreach \i in {1,...,5}{\foreach \j in {\i,...,5}{\draw (x\i)--(x\j);}}
\draw[line width = 2pt, blue] (x1)--(x3)--(x5);
\foreach \i in {1,...,5}{\fill (x\i) circle (3pt);}
\end{tikzpicture} \\
$z_{13}+z_{35}$
\end{minipage}
\begin{minipage}{.2\linewidth} \centering
\begin{tikzpicture}
\path (0.5,0) coordinate (x1); \path (0,1) coordinate (x2); \path (1,1.7) coordinate (x3); \path (2,1) coordinate (x4); \path (1.5,0) coordinate (x5);
\foreach \i in {1,...,5}{\foreach \j in {\i,...,5}{\draw (x\i)--(x\j);}}
\draw[line width = 2pt, blue] (x1)--(x4)--(x5);
\foreach \i in {1,...,5}{\fill (x\i) circle (3pt);}
\end{tikzpicture} \\
$z_{14}+z_{45}$
\end{minipage}
\begin{minipage}{.2\linewidth} \centering
\begin{tikzpicture}
\path (0.5,0) coordinate (x1); \path (0,1) coordinate (x2); \path (1,1.7) coordinate (x3); \path (2,1) coordinate (x4); \path (1.5,0) coordinate (x5);
\foreach \i in {1,...,5}{\foreach \j in {\i,...,5}{\draw (x\i)--(x\j);}}
\draw[line width = 2pt, blue] (x1)--(x5);
\foreach \i in {1,...,5}{\fill (x\i) circle (3pt);}
\end{tikzpicture} \\
$z_{15}$
\end{minipage}
\end{center}
\caption{4 basis elements in the radical for $k=1$ and $\alpha = s_{1234}+ s_{2345}$}
\label{5-cliques radical k=1}
\end{figure}
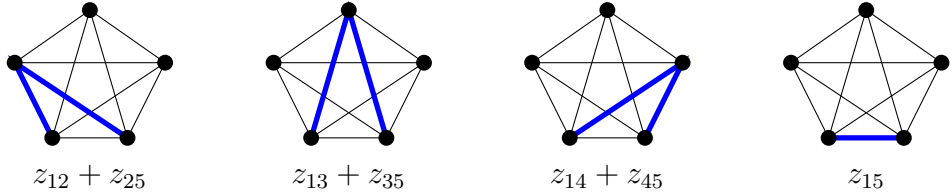

If $k=2$ and $\alpha = s_{1234}+ s_{2345}+s_{4567}+s_{5678}$, then the following 7 elements form a basis for the radical: $\{z_{12}+z_{25},\ z_{13} + z_{35},\ z_{14}+z_{45}+z_{58},\ z_{15},\ z_{46} + z_{68},\ z_{47}+ z_{78},\ z_{48} \}$.

The corresponding edges are \textbf{\textcolor{blue}{highlighted}} in Figure \ref{5-cliques radical k=2}.

\begin{figure}[h]
\begin{center}
\begin{minipage}{.4\linewidth} \centering
\begin{tikzpicture}[scale=.9]
\path (0.5,0) coordinate (x1); \path (0,1) coordinate (x2); \path (1,1.7) coordinate (x3); \path (2,1) coordinate (x4); \path (1.5,0) coordinate (x5); \path (2.5, -0.7) coordinate (x6); \path (3.5,0) coordinate (x7); \path (3,1) coordinate (x8);
\foreach \i in {1,...,5}{\foreach \j in {\i,...,5}{\draw (x\i)--(x\j);}}
\foreach \i in {4,...,8}{\foreach \j in {\i,...,8}{\draw (x\i)--(x\j);}}
\draw[line width = 2pt, blue] (x1)--(x2)--(x5) (x4)--(x7)--(x8);
\foreach \i in {1,...,8}{\fill (x\i) circle (3pt);}
\end{tikzpicture} \\
$z_{12} + z_{25}$ and $z_{47}+ z_{78}$
\end{minipage}
\begin{minipage}{.4\linewidth} \centering
\begin{tikzpicture}[scale=.9]
\path (0.5,0) coordinate (x1); \path (0,1) coordinate (x2); \path (1,1.7) coordinate (x3); \path (2,1) coordinate (x4); \path (1.5,0) coordinate (x5); \path (2.5, -0.7) coordinate (x6); \path (3.5,0) coordinate (x7); \path (3,1) coordinate (x8);
\foreach \i in {1,...,5}{\foreach \j in {\i,...,5}{\draw (x\i)--(x\j);}}
\foreach \i in {4,...,8}{\foreach \j in {\i,...,8}{\draw (x\i)--(x\j);}}
\draw[line width = 2pt, blue] (x1)--(x3)--(x5) (x4)--(x6)--(x8);
\foreach \i in {1,...,8}{\fill (x\i) circle (3pt);}
\end{tikzpicture} \\
$z_{13} + z_{35}$ and $z_{46} + z_{68}$
\end{minipage}
\vskip .3 in
\begin{minipage}{.4\linewidth} \centering
\begin{tikzpicture}[scale=.9]
\path (0.5,0) coordinate (x1); \path (0,1) coordinate (x2); \path (1,1.7) coordinate (x3); \path (2,1) coordinate (x4); \path (1.5,0) coordinate (x5); \path (2.5, -0.7) coordinate (x6); \path (3.5,0) coordinate (x7); \path (3,1) coordinate (x8);
\foreach \i in {1,...,5}{\foreach \j in {\i,...,5}{\draw (x\i)--(x\j);}}
\foreach \i in {4,...,8}{\foreach \j in {\i,...,8}{\draw (x\i)--(x\j);}}
\draw[line width = 2pt, blue] (x1)--(x5) (x4)--(x8);
\foreach \i in {1,...,8}{\fill (x\i) circle (3pt);}
\end{tikzpicture} \\
$z_{15}$ and $z_{48}$
\end{minipage}
\begin{minipage}{.4\linewidth} \centering
\begin{tikzpicture}[scale=.9]
\path (0.5,0) coordinate (x1); \path (0,1) coordinate (x2); \path (1,1.7) coordinate (x3); \path (2,1) coordinate (x4); \path (1.5,0) coordinate (x5); \path (2.5, -0.7) coordinate (x6); \path (3.5,0) coordinate (x7); \path (3,1) coordinate (x8);
\foreach \i in {1,...,5}{\foreach \j in {\i,...,5}{\draw (x\i)--(x\j);}}
\foreach \i in {4,...,8}{\foreach \j in {\i,...,8}{\draw (x\i)--(x\j);}}
\draw[line width = 2pt, blue] (x1)--(x4)--(x5)--(x8);
\foreach \i in {1,...,8}{\fill (x\i) circle (3pt);}
\end{tikzpicture} \\
$z_{14}+z_{45}+z_{58}$
\end{minipage}
\end{center}
\caption{7 basis elements in the radical for $k=2$ and $\alpha = s_{1234}+ s_{2345}+s_{4567}+s_{5678}$}
\label{5-cliques radical k=2}
\end{figure}
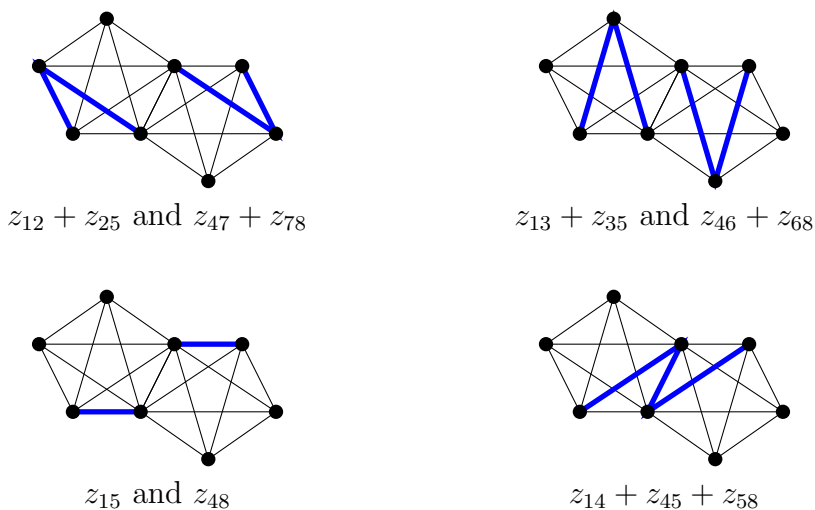

If $k=3$ and $\alpha = s_{1234}+s_{2345}+s_{4567} + s_{5678} + s_{789(10)} + s_{89(10)(11)}$, then the following 10 elements give a basis for the radical: $\{z_{12}+z_{25},\ z_{13} + z_{35},\ z_{14}+z_{45}+z_{58},\ z_{15},\ z_{46} + z_{68},\ z_{47}+ z_{78}+z_{8(11)},\ z_{48},\ z_{79}+z_{9(11)},\ z_{7(10)}+z_{(10)(11)},\ z_{7(11)} \}.$
The corresponding edges are \textbf{\textcolor{blue}{highlighted}} in Figure \ref{5-cliques radical k=3}.

\begin{figure}[h!]
\begin{center}
\begin{minipage}{.48\linewidth} \centering
\begin{tikzpicture}[scale=.9]
\path (0.5,0) coordinate (x1); \path (0,1) coordinate (x2); \path (1,1.7) coordinate (x3); \path (2,1) coordinate (x4); \path (1.5,0) coordinate (x5); \path (2.5, -0.7) coordinate (x6); \path (3.5,0) coordinate (x7); \path (3,1) coordinate (x8); \path (4,1.7) coordinate (x9); \path (5,1) coordinate (x10); \path (4.5,0) coordinate (x11); 
\foreach \i in {1,...,5}{\foreach \j in {\i,...,5}{\draw (x\i)--(x\j);}}
\foreach \i in {4,...,8}{\foreach \j in {\i,...,8}{\draw (x\i)--(x\j);}}
\foreach \i in {7,...,11}{\foreach \j in {\i,...,11}{\draw (x\i)--(x\j);}}
\draw[line width = 2pt, blue] (x1)--(x2)--(x5) (x7)--(x10)--(x11);
\foreach \i in {1,...,11}{\fill (x\i) circle (3pt);}
\end{tikzpicture} \\
$z_{12} + z_{25}$ and $z_{7(10)}+ z_{(10)(11)}$
\end{minipage}
\begin{minipage}{.48\linewidth} \centering
\begin{tikzpicture}[scale=.9]
\path (0.5,0) coordinate (x1); \path (0,1) coordinate (x2); \path (1,1.7) coordinate (x3); \path (2,1) coordinate (x4); \path (1.5,0) coordinate (x5); \path (2.5, -0.7) coordinate (x6); \path (3.5,0) coordinate (x7); \path (3,1) coordinate (x8); \path (4,1.7) coordinate (x9); \path (5,1) coordinate (x10); \path (4.5,0) coordinate (x11); 
\foreach \i in {1,...,5}{\foreach \j in {\i,...,5}{\draw (x\i)--(x\j);}}
\foreach \i in {4,...,8}{\foreach \j in {\i,...,8}{\draw (x\i)--(x\j);}}
\foreach \i in {7,...,11}{\foreach \j in {\i,...,11}{\draw (x\i)--(x\j);}}
\draw[line width = 2pt, blue] (x1)--(x3)--(x5) (x4)--(x6)--(x8) (x7)--(x9)--(x11);
\foreach \i in {1,...,11}{\fill (x\i) circle (3pt);}
\end{tikzpicture} \\
$z_{13} + z_{35}$,\ $z_{46} + z_{68}$, and $z_{79} + z_{9(11)}$
\end{minipage}
\vskip .3 in
\begin{minipage}{.48\linewidth} \centering
\begin{tikzpicture}[scale=.9]
\path (0.5,0) coordinate (x1); \path (0,1) coordinate (x2); \path (1,1.7) coordinate (x3); \path (2,1) coordinate (x4); \path (1.5,0) coordinate (x5); \path (2.5, -0.7) coordinate (x6); \path (3.5,0) coordinate (x7); \path (3,1) coordinate (x8); \path (4,1.7) coordinate (x9); \path (5,1) coordinate (x10); \path (4.5,0) coordinate (x11); 
\foreach \i in {1,...,5}{\foreach \j in {\i,...,5}{\draw (x\i)--(x\j);}}
\foreach \i in {4,...,8}{\foreach \j in {\i,...,8}{\draw (x\i)--(x\j);}}
\foreach \i in {7,...,11}{\foreach \j in {\i,...,11}{\draw (x\i)--(x\j);}}
\draw[line width = 2pt, blue] (x1)--(x5) (x4)--(x8) (x7)--(x11);
\foreach \i in {1,...,11}{\fill (x\i) circle (3pt);}
\end{tikzpicture} \\
$z_{15}$,\ $z_{48}$,\ and  $z_{7(11)}$
\end{minipage}
\begin{minipage}{.48\linewidth} \centering
\begin{tikzpicture}[scale=.9]
\path (0.5,0) coordinate (x1); \path (0,1) coordinate (x2); \path (1,1.7) coordinate (x3); \path (2,1) coordinate (x4); \path (1.5,0) coordinate (x5); \path (2.5, -0.7) coordinate (x6); \path (3.5,0) coordinate (x7); \path (3,1) coordinate (x8); \path (4,1.7) coordinate (x9); \path (5,1) coordinate (x10); \path (4.5,0) coordinate (x11); 
\foreach \i in {1,...,5}{\foreach \j in {\i,...,5}{\draw (x\i)--(x\j);}}
\foreach \i in {4,...,8}{\foreach \j in {\i,...,8}{\draw (x\i)--(x\j);}}
\foreach \i in {7,...,11}{\foreach \j in {\i,...,11}{\draw (x\i)--(x\j);}}
\draw[line width = 2pt, blue] (x1)--(x4)--(x5)--(x8);
\draw[line width = 2pt, OliveGreen] (x4)--(x7)--(x8)--(x11);
\foreach \i in {1,...,11}{\fill (x\i) circle (3pt);}
\end{tikzpicture} \\
$z_{14}+z_{45}+z_{58}$ and $z_{47} + z_{78} + z_{8(11)}$
\end{minipage}
\end{center}
\caption{10 basis elements in the radical for $k=3$ and $\alpha=s_{1234}+s_{2345}+s_{4567} + s_{5678} + s_{789(10)} + s_{89(10)(11)}$}
\label{5-cliques radical k=3}
\end{figure}
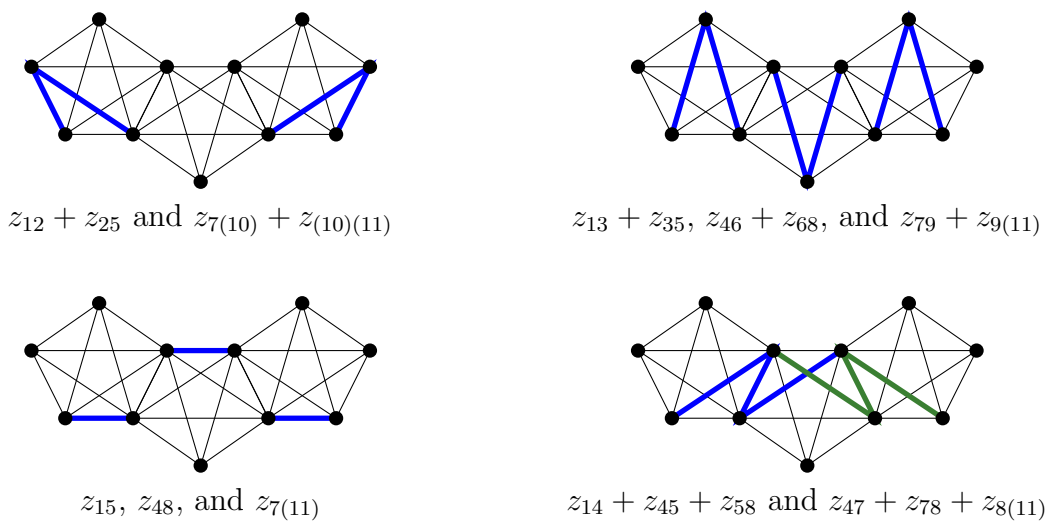

We are developing a pattern to determine a basis for the radical of (\ref{formmod2}) for any $k$. First, we order the basis elements of $H_4(G;\Z_2)$ as follows:
\begin{equation*}
\{s_{1234},\ s_{1235},\ s_{1245},\ s_{1345},\ s_{2345},\ s_{4567},\ s_{4568},\ s_{4578},\ s_{4678},\ s_{5678}, \dotsc,\ s_{(3k-1)(3k)(3k+1)(3k+2)}\} 
\end{equation*}
Note that each 5-clique has five basis elements in $H_4(G;\Z_2)$, ordered consecutively in the set above. Consider the following choice of $\alpha$, in which only the first and last basis elements of each 5-clique are nonzero:
\begin{equation*}
\alpha = s_{1234} + s_{2345} + s_{4567} + s_{5678} + \dotsc + s_{(3k-2)(3k-1)(3k)(3k+1)} + s_{(3k-1)(3k)(3k+1)(3k+2)}.
\end{equation*} 
Notice that this choice of $\alpha$ agrees with the previous choices for small $k$. Based on the developed pattern, we can find a basis for the radical for any $k$: \\

\begin{center}
\begin{minipage}{.48\linewidth} \centering
\begin{tikzpicture}[scale=.9]
\path (0.5,0) coordinate (x1); \path (0,1) coordinate (x2); \path (1,1.7) coordinate (x3); \path (2,1) coordinate (x4); \path (1.5,0) coordinate (x5); \path (2.5, -0.7) coordinate (x6); \path (3.5,0) coordinate (x7); \path (3,1) coordinate (x8); \path (4,1.7) coordinate (x9); \path (5,1) coordinate (x10); \path (4.5,0) coordinate (x11);  \path (6.5,-0.7) coordinate (x12); \path (7.5,0) coordinate (x13); \path (7,1) coordinate (x14); \path (5.5,0) coordinate (x15); \path (6,1) coordinate (x16);
\foreach \i in {1,...,5}{\foreach \j in {\i,...,5}{\draw (x\i)--(x\j);}}
\foreach \i in {4,...,8}{\foreach \j in {\i,...,8}{\draw (x\i)--(x\j);}}
\foreach \i in {7,...,11}{\foreach \j in {\i,...,11}{\draw (x\i)--(x\j);}}
\foreach \i in {12,...,16}{\foreach \j in {\i,...,16}{\draw (x\i)--(x\j);}}
\draw[dotted, line width = 1pt] (5.2,1)--(5.8,1) (4.7,0)--(5.3,0);
\draw[line width = 2pt, blue] (x1)--(x2)--(x5) (x14)--(x13)--(x16);
\foreach \i in {1,...,16}{\fill (x\i) circle (3pt);}
\end{tikzpicture} \\
$z_{12}+z_{25}$ and  $z_{(3k-2)(3k+1)}+z_{(3k+1)(3k+2)}$ \\
\phantom{$s_{more}$}
\end{minipage}
\begin{minipage}{.48\linewidth} \centering
\begin{tikzpicture}[scale=.9]
\path (0.5,0) coordinate (x1); \path (0,1) coordinate (x2); \path (1,1.7) coordinate (x3); \path (2,1) coordinate (x4); \path (1.5,0) coordinate (x5); \path (2.5, -0.7) coordinate (x6); \path (3.5,0) coordinate (x7); \path (3,1) coordinate (x8); \path (4,1.7) coordinate (x9); \path (5,1) coordinate (x10); \path (4.5,0) coordinate (x11);  \path (6.5,-0.7) coordinate (x12); \path (7.5,0) coordinate (x13); \path (7,1) coordinate (x14); \path (5.5,0) coordinate (x15); \path (6,1) coordinate (x16);
\foreach \i in {1,...,5}{\foreach \j in {\i,...,5}{\draw (x\i)--(x\j);}}
\foreach \i in {4,...,8}{\foreach \j in {\i,...,8}{\draw (x\i)--(x\j);}}
\foreach \i in {7,...,11}{\foreach \j in {\i,...,11}{\draw (x\i)--(x\j);}}
\foreach \i in {12,...,16}{\foreach \j in {\i,...,16}{\draw (x\i)--(x\j);}}
\draw[dotted, line width = 1pt] (5.2,1)--(5.8,1) (4.7,0)--(5.3,0);
\draw[line width = 2pt, blue] (x1)--(x3)--(x5) (x4)--(x6)--(x8) (x7)--(x9)--(x11) (x14)--(x12)--(x16);
\foreach \i in {1,...,16}{\fill (x\i) circle (3pt);}
\end{tikzpicture} \\
 $z_{13}+z_{35}$,\ $z_{46}+z_{68}$,\ $z_{79}+z_{9(11)}$, $\dotsc$,\ and $z_{(3k-2)(3k)}+z_{(3k)(3k+2)}$
\end{minipage}
\vskip .3 in
\begin{minipage}{.48\linewidth} \centering
\begin{tikzpicture}[scale=.9]
\path (0.5,0) coordinate (x1); \path (0,1) coordinate (x2); \path (1,1.7) coordinate (x3); \path (2,1) coordinate (x4); \path (1.5,0) coordinate (x5); \path (2.5, -0.7) coordinate (x6); \path (3.5,0) coordinate (x7); \path (3,1) coordinate (x8); \path (4,1.7) coordinate (x9); \path (5,1) coordinate (x10); \path (4.5,0) coordinate (x11);  \path (6.5,-0.7) coordinate (x12); \path (7.5,0) coordinate (x13); \path (7,1) coordinate (x14); \path (5.5,0) coordinate (x15); \path (6,1) coordinate (x16);
\foreach \i in {1,...,5}{\foreach \j in {\i,...,5}{\draw (x\i)--(x\j);}}
\foreach \i in {4,...,8}{\foreach \j in {\i,...,8}{\draw (x\i)--(x\j);}}
\foreach \i in {7,...,11}{\foreach \j in {\i,...,11}{\draw (x\i)--(x\j);}}
\foreach \i in {12,...,16}{\foreach \j in {\i,...,16}{\draw (x\i)--(x\j);}}
\draw[dotted, line width = 1pt] (5.2,1)--(5.8,1) (4.7,0)--(5.3,0);
\draw[line width = 2pt, blue] (x1)--(x5) (x4)--(x8) (x7)--(x11) (x14)--(x16);
\foreach \i in {1,...,16}{\fill (x\i) circle (3pt);}
\end{tikzpicture} \\
 $z_{15}$,\ $z_{48}$,\ $z_{7(11)}$, $\dotsc$,\ and $z_{(3k-2)(3k+2)}$
\newline \phantom{$s_{more}$} 
\newline \phantom{$s_{more}$} 
\end{minipage}
\begin{minipage}{.48\linewidth} \centering
\begin{tikzpicture}[scale=.9]
\path (0.5,0) coordinate (x1); \path (0,1) coordinate (x2); \path (1,1.7) coordinate (x3); \path (2,1) coordinate (x4); \path (1.5,0) coordinate (x5); \path (2.5, -0.7) coordinate (x6); \path (3.5,0) coordinate (x7); \path (3,1) coordinate (x8); \path (4,1.7) coordinate (x9); \path (5,1) coordinate (x10); \path (4.5,0) coordinate (x11);  \path (6.5,-0.7) coordinate (x12); \path (7.5,0) coordinate (x13); \path (7,1) coordinate (x14); \path (5.5,0) coordinate (x15); \path (6,1) coordinate (x16);
\foreach \i in {1,...,5}{\foreach \j in {\i,...,5}{\draw (x\i)--(x\j);}}
\foreach \i in {4,...,8}{\foreach \j in {\i,...,8}{\draw (x\i)--(x\j);}}
\foreach \i in {7,...,11}{\foreach \j in {\i,...,11}{\draw (x\i)--(x\j);}}
\foreach \i in {12,...,16}{\foreach \j in {\i,...,16}{\draw (x\i)--(x\j);}}
\draw[dotted, line width = 1pt] (5.2,1)--(5.8,1) (4.7,0)--(5.3,0);
\draw[line width = 2pt, blue] (x1)--(x4)--(x5)--(x8) (x7)--(x10)--(x11)--(4.9,.3) (x14)--(x15)--(x16)--(5.6,.7);
\draw[line width = 2pt, OliveGreen] (x4)--(x7)--(x8)--(x11) ;
\foreach \i in {1,...,16}{\fill (x\i) circle (3pt);}
\end{tikzpicture} \\
 $z_{14}+z_{45}+z_{58}$,\ $z_{47}+z_{78}+z_{8(11)}$,\ $z_{7(10)}+z_{(10)(11)}+z_{(11)(14)}$, $\dotsc$,\ and $z_{(3k-5)(3k-2)}+z_{(3k-2)(3k-1)}+z_{(3k-1)(3k+2)}$
\end{minipage}
\end{center}

\noindent We conclude there are $2 + k + k + (k-1) = 3k+1$ elements in this basis for the radical. As previously noted, this implies the rank of the form for our choice of $\alpha$ is $6k$. \\

The realizing manifold construction for the upper bound is quite straightforward. We start with two copies of a 4-torus and $k-1$ copies of $T^2 \times \Sigma_2$. The required surgeries are most easily explained with an example. Let $k=4$. Let $\pi_1$ of the two 4-tori be generated by $\{x_1,x_2,x_3,x_4\}$ and $\{y_1,y_2,y_3,y_4\}$, and let $\pi_1$ of the three copies of $T^2 \times \Sigma_2$ be generated by $\{a_1,a_2\}$ and $\{b_1,b_2,b_3,b_4\}$, $\{c_1,c_2\}$ and $\{d_1,d_2,d_3,d_4\}$, and $\{t_1,t_2\}$ and $\{s_1,s_2,s_3,s_4\}$. (See Figure \ref{pieces of 5} for the graphical representations of $\pi_1$.) Before surgeries, $b_2(\#2T^4\#3(T^2 \times \Sigma_2))=42$. 

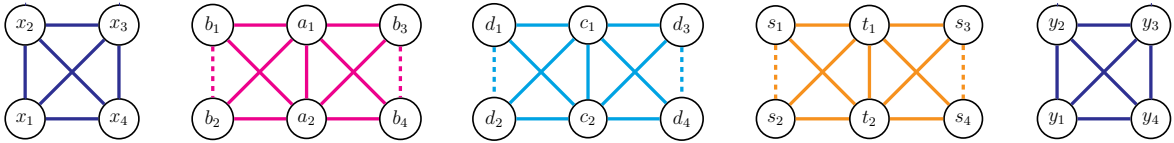
\begin{figure}[h]
\centering
\resizebox{.95\textwidth}{!}{
\begin{tikzpicture}[scale=2]
\tikzstyle{every node}=[draw, shape=circle, line width = 1pt];
\path (0,0) node (x1) {$x_1$};
\path (0,1) node (x2) {$x_2$};
\path (1,1) node (x3){$x_3$};
\path (1,0) node (x4) {$x_4$};
\path (2,0) node (y2) {$b_2$};
\path (2,1) node (y1){$b_1$};
\path (3,1) node (z1) {$a_1$};
\path (3,0) node (z2) {$a_2$};
\path (4,0) node (y4){$b_4$};
\path (4,1) node (y3) {$b_3$};
\path (5,1) node (w1) {$d_1$};
\path (5,0) node (w2){$d_2$};
\path (6,0) node (v2) {$c_2$};
\path (6,1) node (v1) {$c_1$};
\path (7,1) node (w3){$d_3$};
\path (7,0) node (w4) {$d_4$};
\path (8,0) node (s2) {$s_2$};
\path (8,1) node (s1){$s_1$};
\path (9,1) node (t1) {$t_1$};
\path (9,0) node (t2) {$t_2$};
\path (10,0) node (s4){$s_4$};
\path (10,1) node (s3) {$s_3$};
\path (11,1) node (p2) {$y_2$};
\path (11,0) node (p1){$y_1$};
\path (12,0) node (p4) {$y_4$};
\path (12,1) node (p3) {$y_3$};
\foreach \i in {1,...,4}{\foreach \j in {\i,...,4}{\draw[line width = 2pt,Blue] (x\i)--(x\j) (p\i)--(p\j);}}
\foreach \i in {1,2,3,4}{\foreach \j in {1,2}{\draw[line width = 2pt,Magenta] (y\i)--(z\j) ;}}
\foreach \i in {1,2,3,4}{\foreach \j in {1,2}{\draw[line width = 2pt,Cerulean] (w\i)--(v\j) ;}}
\foreach \i in {1,2,3,4}{\foreach \j in {1,2}{\draw[line width = 2pt,BurntOrange] (s\i)--(t\j);}}
\draw[line width = 2pt,Magenta] (z1)--(z2); \draw[line width = 2pt, dashed, Magenta] (y1)--(y2) (y3)--(y4);
\draw[line width = 2pt,Cerulean] (v1)--(v2);  \draw[line width = 2pt, dashed, Cerulean] (w1)--(w2) (w3)--(w4);
\draw[line width = 2pt,BurntOrange] (t1)--(t2); \draw[line width = 2pt, dashed, BurntOrange] (s1)--(s2) (s3)--(s4);
\end{tikzpicture}
}
\vskip .1 in
\caption{A graph representing $\pi_1$ of the products of surfaces necessary for this 4-manifold construction}
\label{pieces of 5}
\end{figure}

Perform surgery on the connected sum to induce the following 12 identifications:
\begin{eqnarray*}
x_1=b_2 \quad & b_3=c_1 & \quad d_4=t_2 \\
x_3=b_1 \quad & b_4=d_2 & \quad y_1 = s_4 \\
x_4=a_2 \quad & c_2=s_2 & \quad y_2=t_1 \\
a_1=d_1\quad & d_3 = s_1& \quad y_3=s_3 
\end{eqnarray*}
\noindent These do not change $b_2$. Lastly, perform surgeries to induce the following four commutator relations: $[x_2,a_1]=1$, $[x_4,c_2]=1$, $[b_3,y_2]=1$, and $[d_4,y_4]=1$. After these surgeries, $b_2 = 42 + 4(2) = 50$. The resulting graph associated to $\pi_1$ of this realizing manifold is shown in Figure \ref{4 5-cliques}. 

\begin{figure}[h]
\centering
\begin{tikzpicture}[scale=2]
\tikzstyle{every node}=[draw, shape=circle, inner sep=.5mm, minimum size=.7cm, line width=1pt];
\path (0.5,0) node (x1) {$x_1$};
\path (0.5,0) node (b2) {};
\path (1.5,0) node (x4) {$x_4$};
\path (1.5,0) node (a2) {};
\path (3.5,0) node (c2) {$c_2$};
\path (3.5,0) node (w2) {};
\path (4.5,0) node (d4) {$d_4$};
\path (4.5,0) node (z2) {};
\path (6.5,0) node (y4) {$y_4$};
\path (0,1) node (x2) {$x_2$};
\path (2,1) node (a1) {$a_1$};
\path (2,1) node (d1) {};
\path (3,1) node (b3) {$b_3$};
\path (3,1) node (c1) {};
\path (5,1) node (y2) {$y_2$};
\path (5,1) node (z1) {};
\path (6,1) node (y3) {$y_3$};
\path (6,1) node (w3) {};
\path (1,1.7) node (x3) {$x_3$};
\path (1,1.7) node (b1) {};
\path (4,1.7) node (d3){$d_3$};
\path (4,1.7) node (w1){};
\path (2.5,-0.7) node (b4){$b_4$};
\path (2.5,-0.7) node (d2){};
\path (5.5,-0.7) node (y1){$y_1$};
\path (5.5,-0.7) node (w4){};
\foreach \i in {1,...,4}{\foreach \j in {\i,...,4}{\draw[line width = 2pt,Blue] (x\i)--(x\j) (y\i)--(y\j);}}
\foreach \i in {1,2,3,4}{\foreach \j in {1,2}{\draw[line width = 2pt,Magenta] (b\i)--(a\j) ;}}
\foreach \i in {1,2,3,4}{\foreach \j in {1,2}{\draw[line width = 2pt,Cerulean] (d\i)--(c\j) ;}}
\foreach \i in {1,2,3,4}{\foreach \j in {1,2}{\draw[line width = 2pt,BurntOrange] (w\i)--(z\j);}}
\draw[line width = 2pt,Magenta] (a1)--(a2); \draw[line width = 2pt, loosely dashed, Magenta] (b1)--(b2) (b3)--(b4);
\draw[line width = 2pt,Cerulean] (c1)--(c2);  \draw[line width = 2pt, loosely dashed, Cerulean] (d1)--(d2) (d3)--(d4);
\draw[line width = 2pt,BurntOrange] (z1)--(z2); \draw[line width = 2pt, loosely dashed, BurntOrange] (w1)--(w2) (w3)--(w4);
\draw[line width = 2pt,OliveGreen] (x2)--(a1) (x4)--(c2) (b3)--(y2) (d4)--(y4);
\end{tikzpicture}
\caption{A graph representing $\pi_1$ of a realizing manifold for four 5-cliques attached edge-to-edge}
\label{4 5-cliques}
\end{figure}
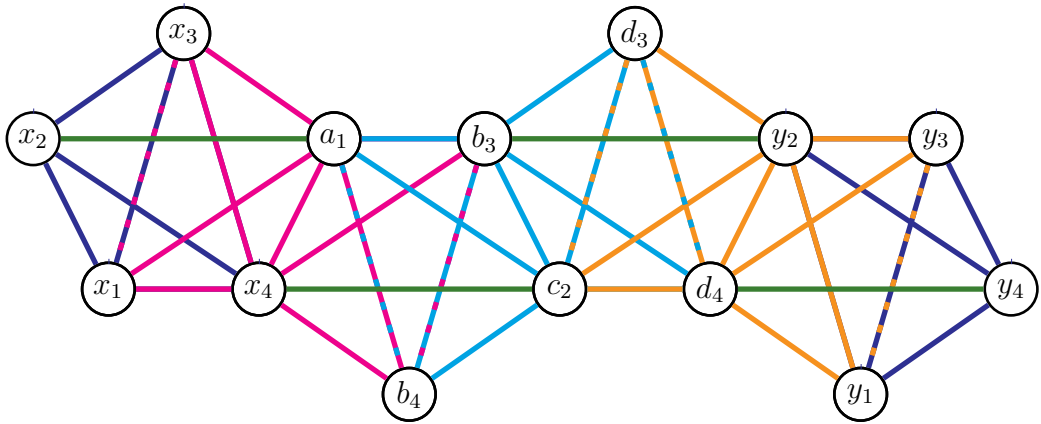

For a graph with $k$ 5-cliques, $k-1$ copies of $T^2 \times \Sigma_2$ are required, and the necessary identification surgeries follow the pattern described by the example. Each copy of $T^4$ adds 6 to the count of $b_2$, and each copy of $T^2 \times \Sigma_2$ adds 10. Lastly, $k$ commutator surgeries are necessary and increase $b_2$ by 2 each. The resulting 4-manifold $M$ has $b_2(M)=6(2)+10(k-1)+2k = 12k+2$.
\end{proof}

Recall Proposition \ref{evenform} which states that $m_2(G)$ is even for a \RAAG\ $G$. The following theorem relies on this proposition and calculates $h$ for a family of graphs of pure dimension $6$.

\begin{theorem}
\label{theorem 6-cliques string}
Let $G$ be a \RAAG\ with an associated graph containing $k$ 6-cliques attached edge-to-edge as in Figure \ref{string of 6-cliques}. Then $h(G) = 14k+2$. 
\end{theorem}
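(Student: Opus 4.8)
The plan is to follow the two-step template of Theorem \ref{theorem 5-cliques string}: first pin down the lower bound from the cohomology ring, then build a realizing 4-manifold whose $b_2$ matches it. The decisive simplification here is that the lower bound is essentially free. Counting edges, each $6$-clique contributes $\binom{6}{2}=15$ edges and consecutive cliques share a single edge, so $b_2(G)=15k-(k-1)=14k+1$, which is odd for every $k$. Proposition \ref{evenform} then gives at once that $h(G)\ge b_2(G)+1=14k+2$, with no need to compute $m_2(G)$ directly. A posteriori, once the matching upper bound is in hand, the inequality $h\ge 2b_2-m_2$ together with $m_2$ even and $m_2\le b_2$ forces $m_2(G)=14k$, so $G$ is in fact \cm.

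All the work is therefore in constructing $M\in\mathcal M(G)$ with $b_2(M)=14k+2$. First I would record that $G$ has $6+4(k-1)=4k+2$ vertices and that no $4$-clique spans two adjacent $6$-cliques, so every $4$-clique lies inside a single $6$-clique. Since the $15$ edges of a single $K_6$ on $\{1,\dots,6\}$ are covered by the three $4$-cliques $\{1,2,3,4\}$, $\{1,2,5,6\}$, $\{3,4,5,6\}$, I can realize the commutator structure of each $6$-clique with three $4$-reductions. Concretely, begin with $\#(4k+2)(S^1\times S^3)$, which has $b_2=0$ and free fundamental group, and perform $3k$ $4$-reductions, three per $6$-clique using the above triple of $4$-cliques. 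Recalling that each $4$-reduction raises $b_2$ by $6$ and imposes the six commutators of a $K_4$, after all $3k$ of them $\pi_1=G$ and $b_2=18k$.

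It remains to remove exactly $4k-2$ from $b_2$, that is, to perform $2k-1$ surgeries on pairs of dual $2$-spheres via Lemma \ref{sphere surgery}. The dual spheres arise from the intersection pattern of the glued $4$-tori exactly as in the discussion preceding Lemma \ref{cross}: two $4$-tori sharing a single edge (two identified generators) produce an embedded $2$-sphere, and two such spheres lying in a common $4$-torus meet once. Inside each $6$-clique the three $4$-tori pairwise share an edge, yielding a triangle of three such spheres from which one disjoint dual pair can be surgered; and each of the $k-1$ edges shared between consecutive $6$-cliques yields one further dual pair. This accounts for $k+(k-1)=2k-1$ surgeries, giving $18k-2(2k-1)=14k+2$ and hence $h(G)\le 14k+2$.

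The main obstacle is the last step: verifying that these $2k-1$ dual pairs are carried by \emph{simultaneously disjoint} spheres, so that all $2k-1$ surgeries may be performed at once without altering $\pi_1$. The pairwise-intersecting triangle of spheres inside each clique means only one surgery is available there, and the cross-clique spheres thread through tori already used internally, so the bookkeeping is genuinely geometric rather than formal. I expect this to require a handle-slide argument in the spirit of Lemma \ref{cross} and of the Grid-family count in Theorem \ref{theorem grids}, choosing the order and the precise submanifold representatives of the identification surgeries so that the selected $2k-1$ dual pairs are mutually disjoint. Conveniently, the already-established lower bound $h(G)\ge 14k+2$ guarantees that no more than $4k-2$ can be removed, so it suffices to exhibit \emph{at least} $2k-1$ disjoint pairs; once that disjointness is secured the two bounds coincide and $h(G)=14k+2$.
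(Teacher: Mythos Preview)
Your lower bound is identical to the paper's: $b_2(G)=14k+1$ is odd, so Proposition~\ref{evenform} gives $h(G)\ge 14k+2$ immediately, with no need to analyze $m_2$ directly. Your a posteriori remark that this forces $m_2(G)=14k$ and hence $G$ is \cm\ is correct and is a nice addition; the paper does not state it explicitly for this family.

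The construction, however, is where you diverge from the paper, and the gap you flag is real and not merely bookkeeping. The paper avoids dual-sphere surgery here entirely: it assembles $M$ from one $T^4$, one $T^2\times\Sigma_2$, and $k-1$ copies of $T^2\times\Sigma_3$, connected-summed and then subjected only to identification surgeries. Since $b_2(T^4)=6$, $b_2(T^2\times\Sigma_2)=10$, $b_2(T^2\times\Sigma_3)=14$, and identification surgeries preserve $b_2$ by Lemma~\ref{surgeries}, the result has $b_2=6+10+14(k-1)=14k+2$ on the nose. The surface relation in each $\Sigma_g$ factor is rendered redundant by the commutators supplied by the adjacent piece, so $\pi_1(M)=G$ with no further surgery and no sphere-disjointness argument.

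In your route the disjointness problem is genuinely delicate. Inside a single $6$-clique your three spheres form a cycle (pairwise intersection $1$), so only one surgery is available there and the third sphere remains entangled with the surgered pair. Across cliques, the shared edge lies in a $T^4$ that already participates in two of the in-clique spheres, so the cross-clique sphere threads through that same torus; its intersections with the in-clique spheres are not automatically single transverse points but depend on which coordinate $2$-tori you choose as representatives. Making $2k-1$ pairs simultaneously disjoint would require a careful Lemma~\ref{cross}-style specification of push-offs in every $T^4$, and it is not clear your informal count of ``one further dual pair per shared edge'' survives that analysis. The paper's surface-product construction sidesteps all of this.
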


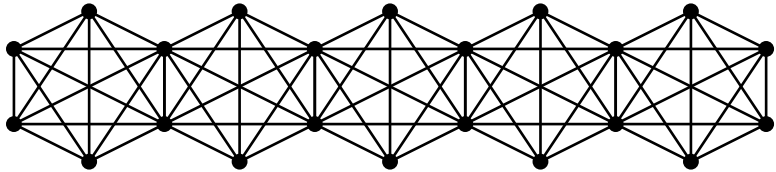
\begin{figure}[H]
\centering
\begin{tikzpicture}
\foreach \i in {1,5,9,13,17,21}{\path (.5*\i-.5,1) coordinate (x\i);}
\foreach \i in {2,6,10,14,18,22}{\path (.5*\i-1,0) coordinate (x\i);}
\foreach \i in {3,7,11,15,19}{\path (.5*\i-.5,1.5) coordinate (x\i);}
\foreach \i in {4,8,12,16,20}{\path (.5*\i-1,-0.5) coordinate (x\i);}
\foreach \i in {1,...,6}{\foreach \j in {\i,...,6}{\draw[line width = 1pt] (x\i)--(x\j);}}
\foreach \i in {5,...,10}{\foreach \j in {\i,...,10}{\draw[line width = 1pt] (x\i)--(x\j);}}
\foreach \i in {9,...,14}{\foreach \j in {\i,...,14}{\draw[line width = 1pt] (x\i)--(x\j);}}
\foreach \i in {13,...,18}{\foreach \j in {\i,...,18}{\draw[line width = 1pt] (x\i)--(x\j);}}
\foreach \i in {17,...,22}{\foreach \j in {\i,...,22}{\draw[line width = 1pt] (x\i)--(x\j);}}
\foreach \i in {1,...,22}{\fill (x\i) circle (3pt);}
\end{tikzpicture}
\caption{A graph of 6-cliques attached edge-to-edge}
\label{string of 6-cliques}
\end{figure}

\begin{proof}
Let $G$ be a \RAAG\ with an associated graph of $k$ 6-cliques attached edge-to-edge as in Figure \ref{string of 6-cliques}. Each 6-clique has 15 edges and $k-1$ edges in the graph are shared, so $b_2(G)=15k-(k-1)=14k+1$. Because $b_2(G)$ is odd, we know that $14k+2 \leq h(G)$ by Proposition \ref{evenform}.

The construction of a realizing manifold for $h(G)$ contains $k-1$ copies of $T^2 \times \Sigma_3$ as well as one copy of $T^2 \times \Sigma_2$ and one 4-torus. As in the proof of Theorem \ref{theorem 5-cliques string}, we will see the pattern of necessary identification surgeries with an example. Let $k=3$. Start with $(T^2 \times \Sigma_2) \# (T^2 \times \Sigma_3) \# (T^2 \times \Sigma_3) \# T^4$, where $\pi_1(T^2\times \Sigma_2)$ is generated by $\{x_1,x_2\}$ and $\{y_1,y_2,y_3,y_4\}$ and $\pi_1(T^4)$ is generated by $\{u_1,u_2,u_3,u_4\}$. Let $\{s_1,s_2\}$ and $\{t_1,\dotsc,t_6\}$ generate $\pi_1$ of the first copy of $T^2 \times \Sigma_3$ and let $\{w_1,w_2\}$ and $\{z_1,\dotsc,z_6\}$ generate $\pi_1$ of the second. Figure \ref{pieces of 6} shows the graphical representation of the fundamental group of each summand of the 4-manifold. 

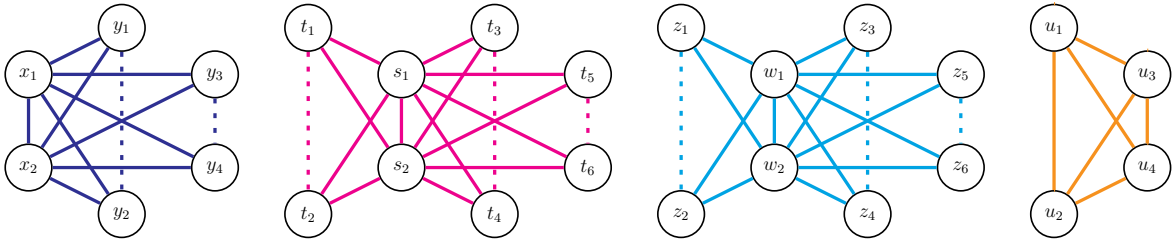
\begin{figure}[h]
\centering
\resizebox{.95\textwidth}{!}{
\begin{tikzpicture}[scale=2]
\tikzstyle{every node}=[draw, shape=circle, minimum size = 1cm, line width = 1pt];
\path (0,1) node (x1) {$x_1$};
\path (0,0) node (x2) {$x_2$};
\path (1,1.5) node (y1) {$y_1$};
\path (1,-0.5) node (y2) {$y_2$};
\path (2,1) node (y3) {$y_3$};
\path (2,0) node (y4) {$y_4$};
\path (3,1.5) node (t1) {$t_1$};
\path (3,-0.5) node (t2) {$t_2$};
\path (4,1) node (s1) {$s_1$};
\path (4,0) node (s2) {$s_2$};
\path (5,1.5) node (t3) {$t_3$};
\path (5,-0.5) node (t4) {$t_4$};
\path (6,1) node (t5) {$t_5$};
\path (6,0) node (t6) {$t_6$};
\path (7,1.5) node (z1) {$z_1$};
\path (7,-0.5) node (z2) {$z_2$};
\path (8,1) node (w1) {$w_1$};
\path (8,0) node (w2) {$w_2$};
\path (9,1.5) node (z3) {$z_3$};
\path (9,-0.5) node (z4) {$z_4$};
\path (10,1) node (z5) {$z_5$};
\path (10,0) node (z6) {$z_6$};
\path (11,1.5) node (u1) {$u_1$};
\path (11,-0.5) node (u2) {$u_2$};
\path (12,1) node (u3) {$u_3$};
\path (12,0) node (u4) {$u_4$};
\foreach \i in {1,...,4}{\foreach \j in {1,2}{\draw[line width = 2pt,Blue] (y\i)--(x\j);}} \draw[line width = 2pt,Blue] (x1)--(x2); \draw[line width = 2pt, loosely dashed,Blue] (y1)--(y2) (y3)--(y4);
\foreach \i in {1,...,6}{\foreach \j in {1,2}{\draw[line width = 2pt,Magenta] (t\i)--(s\j);}} \draw[line width = 2pt,Magenta] (s1)--(s2); \draw[line width = 2pt, loosely dashed,Magenta] (t1)--(t2) (t3)--(t4) (t5)--(t6);
\foreach \i in {1,...,6}{\foreach \j in {1,2}{\draw[line width = 2pt,Cerulean] (z\i)--(w\j);}} \draw[line width = 2pt,Cerulean] (w1)--(w2); \draw[line width = 2pt, loosely dashed,Cerulean] (z1)--(z2) (z3)--(z4) (z5)--(z6);
\foreach \i in {1,...,4}{\foreach \j in {\i,...,4}{\draw[line width = 2pt, BurntOrange] (u\i)--(u\j);}}
\end{tikzpicture}
}
\caption{A graph representing $\pi_1$ of the products of surfaces necessary for this 4-manifold construction}
\label{pieces of 6}
\end{figure}

Perform surgeries to induce the following identifications:
\begin{eqnarray*}
y_1=t_1 \quad & t_3 = z_1 & \quad z_3 = u_1 \\
y_2=t_2 \quad & t_4 = z_2 & \quad z_4 = u_2 \\
y_3 = s_1 \quad & t_5 = w_1 &  \quad z_5 = u_3 \\
y_4 = s_2 \quad & t_6 = w_2 & \quad z_6 = u_4
\end{eqnarray*}
These surgeries yield a 4-manifold with the correct fundamental group. (See Figure \ref{3 6-cliques}.) This example shows the identification surgery pattern one would use to construct a realizing manifold for any $k$. The copy of $T^2 \times \Sigma_2$ adds 10 to the count of $b_2$, each copy of $T^2 \times \Sigma_3$ adds 14, and the 4-torus adds 6. The resulting manifold, $M$, will have $b_2(M) = 10(1) + 14(k-1) + 6 = 14k+2$.
\end{proof}

\begin{figure}[h]
\centering
\begin{tikzpicture}[scale=2]
\tikzstyle{every node}=[draw, shape=circle, inner sep=.5mm, minimum size=.7cm, line width=1pt];
\path (0,1) node (x1) {$x_1$};
\path (0,0) node (x2) {$x_2$};
\path (1,1.5) node (y1) {$y_1$};
\path (1,-0.5) node (y2) {$y_2$};
\path (2,1) node (y3) {$y_3$};
\path (2,0) node (y4) {$y_4$};
\path (1,1.5) node (t1) {};
\path (1,-0.5) node (t2) {};
\path (2,1) node (s1) {};
\path (2,0) node (s2) {};
\path (3,1.5) node (t3) {$t_3$};
\path (3,-0.5) node (t4) {$t_4$};
\path (4,1) node (t5) {$t_5$};
\path (4,0) node (t6) {$t_6$};
\path (3,1.5) node (z1) {};
\path (3,-0.5) node (z2) {};
\path (4,1) node (w1) {};
\path (4,0) node (w2) {};
\path (5,1.5) node (z3) {$z_3$};
\path (5,-0.5) node (z4) {$z_4$};
\path (6,1) node (z5) {$z_5$};
\path (6,0) node (z6) {$z_6$};
\path (5,1.5) node (u1) {};
\path (5,-0.5) node (u2) {};
\path (6,1) node (u3) {};
\path (6,0) node (u4) {};
\foreach \i in {1,...,4}{\foreach \j in {1,2}{\draw[line width = 2pt,Blue] (y\i)--(x\j);}} \draw[line width = 2pt,Blue] (x1)--(x2); 
\foreach \i in {1,...,6}{\foreach \j in {1,2}{\draw[line width = 2pt,Magenta] (t\i)--(s\j);}} \draw[line width = 2pt,Magenta] (s1)--(s2); \draw[line width = 2pt, dashed,Magenta] (t1)--(t2) (t3)--(t4) (t5)--(t6);
\foreach \i in {1,...,4}{\foreach \j in {\i,...,4}{\draw[line width = 2pt, BurntOrange] (u\i)--(u\j);}}
\foreach \i in {1,...,6}{\foreach \j in {1,2}{\draw[line width = 2pt,Cerulean] (z\i)--(w\j);}} \draw[line width = 2pt,Cerulean] (w1)--(w2); 
\draw[line width = 2pt, dashed, dash pattern= on 4pt off 4pt, dash phase = 4pt, Magenta] (t1)--(t2) (t3)--(t4) (t5)--(t6);
\draw[line width = 2pt, dashed, dash pattern= on 4pt off 4pt, Cerulean] (z1)--(z2) (z3)--(z4) (z5)--(z6);
\draw[line width = 2pt, dashed, dash pattern= on 4pt off 4pt,Blue] (y1)--(y2) (y3)--(y4);
\end{tikzpicture}
\caption{A graph representing $\pi_1$ of a realizing manifold for three 6-cliques attached edge-to-edge}
\label{3 6-cliques}
\end{figure}
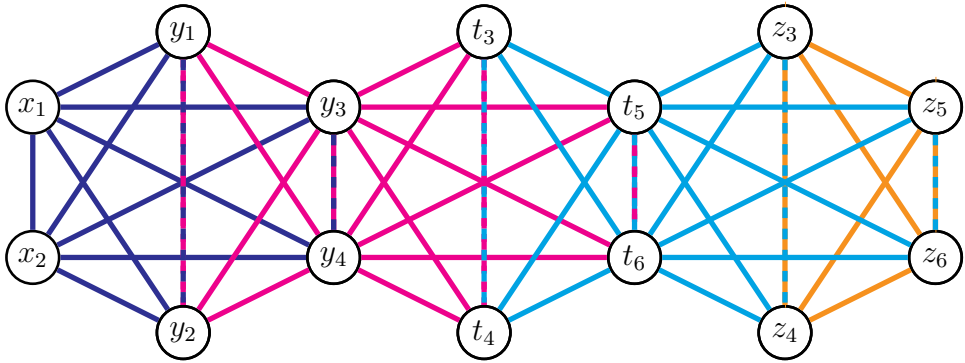

The last family of \RAAGs\ we will explore in this paper is a family of graphs of pure dimension 7.

\begin{theorem}
\label{theorem 7-cliques string}
Let $G$ be a \RAAG\ with an associated graph containing $k$ 7-cliques attached edge-to-edge as in Figure \ref{string of 7-cliques}. Then $h(G) = 20k+2$. 
\end{theorem}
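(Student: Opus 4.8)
The plan is to reuse the architecture of the proofs of Theorems \ref{theorem 5-cliques string} and \ref{theorem 6-cliques string}: obtain the lower bound from the parity of $b_2(G)$, and realize it with a connected sum of products of surfaces.

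For the lower bound, each $7$-clique contributes $\binom{7}{2}=21$ edges and consecutive cliques share exactly one edge, so $b_2(G)=21k-(k-1)=20k+1$. This is odd, so Proposition \ref{evenform} gives $h(G)\geq b_2(G)+1=20k+2$ with no computation of $m_2(G)$ required, exactly as in the $6$-clique case. Once equality is established below, Proposition \ref{hnewlowerbound} forces $m_2(G)=20k$, the largest even integer below $b_2(G)$, so $G$ is automatically \cm; this is a byproduct, not an ingredient.

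For the upper bound I would build $M$ following the genus pattern visible in the earlier proofs, where a string of $n$-cliques uses interior blocks $T^2\times\Sigma_{n-3}$ (genus $2$ for $5$-cliques, genus $3$ for $6$-cliques). Here I take $k-1$ interior copies of $T^2\times\Sigma_4$, together with one $T^4$ and one $T^2\times\Sigma_3$ as the two end blocks. Since $b_2(T^2\times\Sigma_g)=2+4g$, the connected sum starts with $b_2=18(k-1)+6+14=18k+2$, and since it has $10(k-1)+4+8=10k+2$ generators against the $5k+2$ vertices of the associated graph, I expect to glue the blocks together with $5k$ identification surgeries (the torus factor of each interior block serving as a shared edge), which do not change $b_2$ by Lemma \ref{surgeries}. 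Finally I would perform $k$ commutator surgeries, one per clique, each raising $b_2$ by $2$, giving $b_2(M)=18k+2+2k=20k+2$. As a sanity check, for $k=1$ this reduces to $T^4\#(T^2\times\Sigma_3)$ with one commutator surgery, giving $b_2=22=h(\Z^7)$, consistent with Kirk and Livingston.

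The crux, and the hardest part, is verifying that these surgeries yield exactly $G$. The surface relation carried by each $T^2\times\Sigma_g$ summand is not itself a commutator, but as in the $6$-clique proof the surface relations of overlapping blocks cascade: once the torus factor of a neighbouring block kills some of the constituent commutators, each surface relation collapses to a single remaining commutator. For even cliques this cascade resolves completely (hence no commutator surgeries were needed for $6$-cliques), whereas for odd cliques one relation per clique survives, and these are precisely the $k$ relations I induce by commutator surgery (just as in the $5$-clique case). I would make this precise by writing out explicit generators, identifications, and commutator surgeries for $k=2$ and $k=3$, confirming that all $21$ commutators inside each clique appear while no commutator between non-adjacent cliques is created, and then checking that this local pattern repeats along the string, exactly as the surgery lists and $\pi_1$-graphs do in the proofs of Theorems \ref{theorem 5-cliques string} and \ref{theorem 6-cliques string}.
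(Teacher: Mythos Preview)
Your lower bound argument is correct and is exactly what the paper does: $b_2(G)=20k+1$ is odd, so Proposition~\ref{evenform} immediately gives $h(G)\geq 20k+2$.

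The upper bound construction, however, does not work. The extrapolation ``$n$-cliques use interior blocks $T^2\times\Sigma_{n-3}$'' is not what the paper does for $n=7$, and your proposed blocks simply do not carry enough commutator relations. A direct count makes this visible. Your pieces contribute at most
\[
\underbrace{6}_{T^4}\;+\;\underbrace{13}_{T^2\times\Sigma_3}\;+\;\underbrace{17(k-1)}_{T^2\times\Sigma_4\text{'s}}\;=\;17k+2
\]
honest commutators (here $T^2\times\Sigma_g$ provides $[x_1,x_2]$ and the $4g$ relations $[x_i,y_j]$; the surface generators $y_j$ do not commute with one another). Adding the $k$ surface relations (each yields at most one extra commutator after cascading) and your $k$ commutator surgeries gives a ceiling of $19k+2$ relations. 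But the target graph has $20k+1$ edges, so for every $k\geq 2$ you are short by $k-1$ relations before even accounting for the inevitable redundancies caused by identifications. Your $k=1$ sanity check also fails: with $T^4\#(T^2\times\Sigma_3)$ the six surface generators $y_1,\dots,y_6$ must land in only five of the seven final generators (those not occupied by $x_1,x_2$), forcing at least two redundant $[x_i,y_j]$ relations, so one commutator surgery cannot close the gap to $\Z^7$.

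The paper's construction avoids this deficit by keeping the interior blocks at genus~$3$, using end blocks $T^4$ and $T^2\times\Sigma_2$, and replacing your $k$ commutator surgeries (worth one relation each at cost $+2$ to $b_2$) with $k$ \emph{4-reductions} of the form $[a,b,cd,ef]$ (worth six relations each at cost $+6$). The resulting $b_2$ is $6+14(k-1)+10+6k=20k+2$, and the surface-like relations from the 4-reductions cascade with the block relations to produce exactly the required $\pi_1$. The essential point you missed is that for $7$-cliques the surgeries have to be 4-reductions, not single commutators; this is the new ingredient relative to the $5$- and $6$-clique proofs.
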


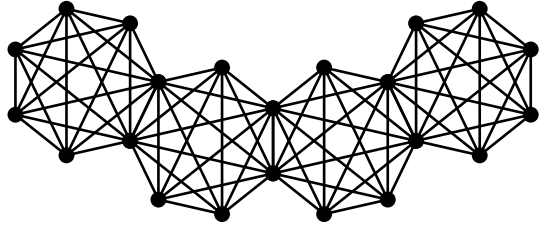
\begin{figure}[H]
\centering
\begin{tikzpicture}
\foreach \i in {1,...,7}{\path (\i*51.429:1cm) coordinate (x\i);}
\begin{scope}[shift={(-25.714:1.8019cm)}]
	\foreach \i in {1,...,7}{\path (\i*51.429+25.714:1cm) coordinate (y\i);}
		\begin{scope}[shift={(0:1.8019cm)}]
			\foreach \i in {1,...,7}{\path (\i*51.429:1cm) coordinate (z\i);}
				\begin{scope}[shift={(25.714:1.8019cm)}]
					\foreach \i in {1,...,7}{\path (\i*51.429+25.714:1cm) coordinate (w\i);}
				\end{scope}		
		\end{scope}			
\end{scope}	
\foreach \i in {1,...,7}{
	\foreach \j in {\i,...,7}{\draw[line width=1pt] (x\i)--(x\j) (y\i)--(y\j) (z\i)--(z\j) (w\i)--(w\j);}
	\fill (x\i) circle (3pt); \fill (y\i) circle (3pt); \fill (z\i) circle (3pt); \fill (w\i) circle (3pt);
	}	
\end{tikzpicture}
\caption{A graph of 7-cliques attached edge-to-edge}
\label{string of 7-cliques}
\end{figure}

\begin{proof}
Let $G$ be a \RAAG\ with an associated graph of $k$ 7-cliques attached edge-to-edge as in Figure \ref{string of 7-cliques}. Each 7-clique has 21 edges and $k-1$ edges in the graph are shared, so $b_2(G)=21k-(k-1)=20k+1$. Because $b_2(G)$ is odd, Proposition \ref{evenform} asserts that $20k+2 \leq h(G)$.

The following construction of a realizing manifold for $h(G)$ contains $k-1$ copies of $T^2 \times \Sigma_3$ as well as one copy of $T^2 \times \Sigma_2$ and one 4-torus. 

As in the proofs of Theorems \ref{theorem 5-cliques string} and \ref{theorem 6-cliques string}, we will see the pattern of necessary identification surgeries with an example. Let $k=3$. Start with $T^4 \# (T^2 \times \Sigma_3) \# (T^2 \times \Sigma_3) \# (T^2 \times \Sigma_2)$. Let $\{x_1,x_2\}$ and $\{y_1,\dotsc,y_6\}$ generate $\pi_1$ of the first copy of $T^2 \times \Sigma_3$, and let $\{s_1,s_2\}$ and $\{t_1,\dotsc,t_6\}$ generate $\pi_1$ of the second. Let $\{z_1,z_2,z_3,z_4\}$ generate $\pi_1(T^4)$ and let $\{u_1,u_2\}$ and $\{v_1,v_2,v_3,v_4\}$ generate $\pi_1(T^2\times \Sigma_2)$. See Figure \ref{pieces of 7} for the graphical representations of each summand of the 4-manifold. 

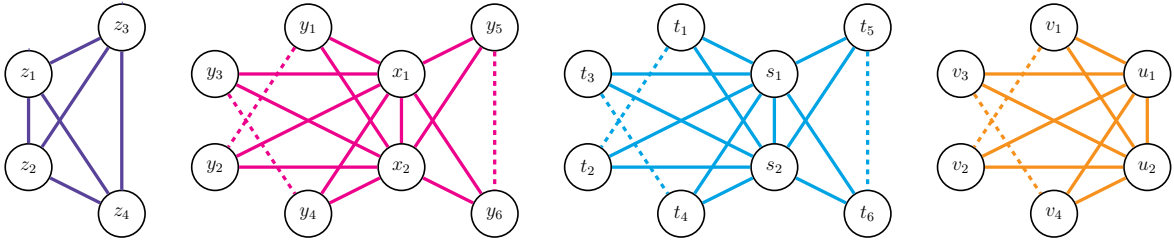
\begin{figure}[h]
\centering
\resizebox{.95\textwidth}{!}{
\begin{tikzpicture}[scale=2]
\tikzstyle{every node}=[draw, shape=circle, minimum size = 1cm, line width = 1pt];
\path (-2,1) node (z1) {$z_1$};
\path (-2,0) node (z2) {$z_2$};
\path (-1,1.5) node (z3) {$z_3$};
\path (-1,-.5) node (z4) {$z_4$};
\path (2,1) node (x1) {$x_1$};
\path (2,0) node (x2) {$x_2$};
\path (1,1.5) node (y1) {$y_1$};
\path (1,-0.5) node (y4) {$y_4$};
\path (0,1) node (y3) {$y_3$};
\path (0,0) node (y2) {$y_2$};
\path (3,1.5) node (y5) {$y_5$};
\path (3,-0.5) node (y6) {$y_6$};
\path (6,1) node (s1) {$s_1$};
\path (6,0) node (s2) {$s_2$};
\path (5,1.5) node (t1) {$t_1$};
\path (5,-0.5) node (t4) {$t_4$};
\path (4,1) node (t3) {$t_3$};
\path (4,0) node (t2) {$t_2$};
\path (7,1.5) node (t5) {$t_5$};
\path (7,-0.5) node (t6) {$t_6$};
\path (8,1) node (v3) {$v_3$};
\path (8,0) node (v2) {$v_2$};
\path (9,1.5) node (v1) {$v_1$};
\path (9,-0.5) node (v4) {$v_4$};
\path (10,1) node (u1) {$u_1$};
\path (10,0) node (u2) {$u_2$};
\foreach \i in {1,2}{\foreach \j in {1,...,6}{\draw[magenta, line width = 2pt] (x\i)--(y\j); \draw[Cerulean, line width = 2pt] (s\i)--(t\j);}} \draw[magenta, line width = 2pt] (x1)--(x2); \draw[Cerulean, line width = 2pt] (s1)--(s2);
\foreach \i in {1,2}{\foreach \j in {1,...,4}{\draw[BurntOrange, line width = 2pt] (u\i)--(v\j); }} \draw[BurntOrange, line width = 2pt] (u1)--(u2);
\draw[magenta, line width = 2 pt, dashed] (y1) -- (y2) (y3) -- (y4) (y5)--(y6);
\draw[Cerulean, line width = 2 pt, dashed] (t1) -- (t2) (t3) -- (t4) (t5)--(t6);
\draw[BurntOrange, line width = 2 pt, dashed] (v1) -- (v2) (v3) -- (v4);
\foreach \i in {1,...,4}{\foreach \j in {\i,...,4}{\draw[line width = 2pt, Violet] (z\i)--(z\j);}}
\end{tikzpicture}
}
\caption{A graph representing $\pi_1$ of the products of surfaces necessary for this 4-manifold construction}
\label{pieces of 7}
\end{figure}

Perform surgeries to induce the following identifications:
\begin{eqnarray*}
y_1 = z_3 & x_1 = t_3 & s_1 = v_3 \\ 
y_2 = z_4 & y_5 = t_1 & t_5 = v_1 \\
y_3 = z_1 & y_6 = t_2 & t_6 = v_2 
\end{eqnarray*}
These surgeries do not change $b_2$. Next perform the following 3 4-reductions:
\begin{equation*}
\textcolor{OliveGreen}{[y_4,z_2,x_1y_1,x_2y_2]},\ \textcolor{Blue}{[x_2,t_4,y_5s_1,y_6,s_2]},\ \textcolor{Red}{[s_2,v_4,t_5u_1,t_6u_2]}
\end{equation*}
These 4-reductions result in a 4-manifold with the correct $\pi_1$. (See Figure \ref{3 7-cliques}.) This example shows the pattern one would use to construct a realizing manifold for any $k$. The copy of $T^2 \times \Sigma_2$ adds 10 to the count of $b_2$, each copy of $T^2 \times \Sigma_3$ adds 14, and the 4-torus adds 6. In addition, $k$ 4-reductions are required and each adds 6 to $b_2$. The resulting manifold $M$ will have 
\begin{equation*}
b_2(M) = 6+14(k-1) + 10+ 6k = 20k+2.
\end{equation*}
\end{proof}

\begin{figure}[h]
\centering
\begin{tikzpicture}[scale = 2] 
\tikzstyle{every node}=[draw, shape=circle, inner sep=.5mm, minimum size=.7cm, line width = 1pt];
\path (51.429:1cm) node (y1) {$y_1$}; \path (4*51.429:1cm) node (y2) {$y_2$}; 
\path (2*51.429:1cm) node (y3) {$y_3$}; \path (5*51.429:1cm) node (y4) {$y_4$};
\path (0:1cm) node (x1) {$x_1$}; \path (-51.429:1cm) node (x2) {$x_2$}; 
\path (3*51.429:1cm) node (z1) {$z_2$};
\begin{scope}[shift={(-25.714:1.8019cm)}]
	\path (231.4305:1cm) node (y6) {$y_6$}; \path (77.1435:1cm) node (y5) {$y_5$};
	\path (51.429+25.714:1cm) node (t1) {}; \path (4*51.429+25.714:1cm) node (t2) {}; 
	\path (2*51.429+25.714:1cm) node (t3) {}; \path (5*51.429+25.714:1cm) node (t4) {$t_4$};
	\path (25.714:1cm) node (s1) {$s_1$}; \path (-25.714:1cm) node (s2) {$s_2$}; 		
		\begin{scope}[shift={(0:1.8019cm)}]
			\path (5*51.429:1cm) node (t6) {$t_6$}; \path (2*51.429:1cm) node (t5) {$t_5$};
			\path (2*51.429:1cm) node (v1) {}; \path (5*51.429:1cm) node (v2) {};
			\path (3*51.429:1cm) node (v3) {}; \path (6*51.429:1cm) node (v4) {$v_4$};
			\path (51.429:1cm) node (u1) {$u_1$}; \path (0:1cm) node (u2) {$u_2$}; 
		\end{scope}			
\end{scope}	
\foreach \i in {1,2}{\foreach \j in {1,...,6}{\draw[magenta, line width = 2pt] (x\i)--(y\j); \draw[Cerulean, line width = 2pt] (s\i)--(t\j);}} \draw[magenta, line width = 2pt] (x1)--(x2); \draw[Cerulean, line width = 2pt] (s1)--(s2);
\foreach \i in {1,2}{\foreach \j in {1,...,4}{\draw[BurntOrange, line width = 2pt] (u\i)--(v\j); }} \draw[BurntOrange, line width = 2pt] (u1)--(u2);
\draw[Violet, line width = 2pt] (z1)--(y2)--(y3)--(z1)--(y1)--(y3) (y2)--(y1);
\draw[OliveGreen, line width = 2pt] (x1)--(z1)--(y4)--(y2) (y1)--(y4) (z1)--(x2) (y1)--(y2);
\draw[Blue, line width = 2pt] (s1)--(x2)--(t4)--(t2) (t1)--(t4) (x2)--(s2) (t1)--(t2);
\draw[Red, line width = 2pt] (u1)--(s2)--(v4)--(v2) (v1)--(v4) (s2)--(u2) (v1)--(v2);
\draw[magenta, line width = 2 pt, dashed] (y1) -- (y2) (y3) -- (y4) (y5)--(y6);
\draw[Cerulean, line width = 2 pt, dashed] (t1) -- (t2) (t3) -- (t4) (t5)--(t6);
\draw[BurntOrange, line width = 2 pt, dashed] (v1) -- (v2) (v3) -- (v4);
\end{tikzpicture}
\caption{A graph representing $\pi_1$ of a realizing manifold for three 7-cliques attached edge-to-edge}
\label{3 7-cliques}
\end{figure}
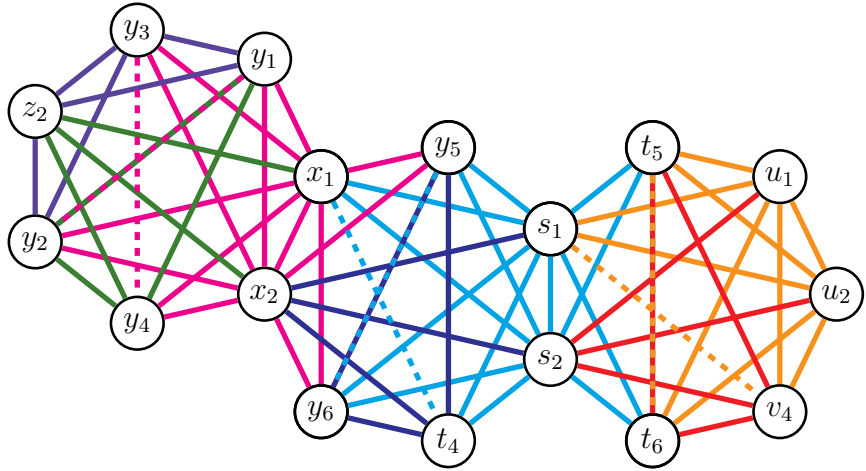

%%%%%%%%%%%%%%%%%%%%%%%%%%%%%%%%%%%%%%%%%%%%%%%%%%%%%%%%%%%%%%%%%%%%%%%%%%%%%%%%%%%%%%%%%%%%%%%%%%%%%%%%%%%%%%%%%%%%
%%%%%%%%%%%%%%%%%%%%%%%%%%%%%%%%%%%%%%%%%%%% CONCLUDING REMARKS %%%%%%%%%%%%%%%%%%%%%%%%%%%%%%%%%%%%%%%%%%%%%%%%%%%%
%%%%%%%%%%%%%%%%%%%%%%%%%%%%%%%%%%%%%%%%%%%%%%%%%%%%%%%%%%%%%%%%%%%%%%%%%%%%%%%%%%%%%%%%%%%%%%%%%%%%%%%%%%%%%%%%%%%%

\section{Concluding Remarks}\label{conclusion}

The author knows no examples of \RAAGs\ that are \emph{not} \cm. We therefore make the following conjecture that is stated previously in the introduction:
\begin{repconj}{Conjecture} 
All \RAAGs\ are \cm. That is, if $G$ is a \RAAG, $h(G) = 2b_2(G)-m_2(G)$.
\end{repconj}

\begin{remark}
This conjecture does not hold for all finitely presented groups. Consider the following counterexample. Let $G= \Z_2 \oplus \Z_2$. A classifying space for $\Z_2 \oplus \Z_2 $ is $\mathbb{RP}^\infty \times \mathbb{RP}^\infty$. Using the Universal Coefficient Theorem, the K\"unneth formula for homology, and the homology of $\mathbb{RP}^\infty$,we see that $b_i(\Z_2\oplus \Z_2)=0$ for $i=1,2$ and 1 for $i=0$. A realizing 4-manifold for $h(\Z_2 \oplus \Z_2)$ is constructed in \cite{KirkLivingston05} from $(L(2,1)\times S^1) \# (S^1 \times S^3)$. Surgery is then performed to identify the generator of $\pi_1(L(2,1))$ and the generator of $\pi_1(S^1\times S^3)$. Let $a$ be the generator of $\pi_1(S^1)$ from $L(2,1)\times S^1$. Surgery is performed on $a^2$, which results in a 4-manifold with the correct $\pi_1$ and $b_2=0$. However, $H^*(\mathbb{RP}^\infty \times \mathbb{RP}^\infty; \Z_2)$ is just the polynomial ring $\Z_2[\alpha,\beta]$. Thus, the form (\ref{formmod2}) must be nondegenerate and so $m_2(\mathbb{RP}^\infty\times \mathbb{RP}^\infty)$ will be positive. Then $2b_2(G) -m_2(G) < h(G) = 0$.

\end{remark}

More generally, the author suspects that the tools described in Section \ref{tools} will be sufficient for all constructions of realizing manifolds for \RAAGs. If true, this would mean that all such realizing manifolds have zero signature, as in the cases of free and free abelian groups \cite{KirkLivingston09}.

\bigskip 

The greatest obstacle in proving this conjecture is in developing a way to generalize current results without using induction. One may expect to find an inductive way to calculate $h$. For example, given any two subgraphs $\Gamma_1$ and $\Gamma_2$ of $\Gamma$, one may expect there is a relationship between $h(G)$ and $h(G_1)$+$h(G_2)$, as is found in the free abelian case \cite[Theorems 8,9]{KirkLivingston05}. 

Kirk and Livingston proved that if 6 divides $mn$, the realizing manifold for $h(\Z^{m+n})$ is constructed from the realizing manifolds for $h(\Z^m)$ and $h(\Z^n)$. Consider a \RAAG\ $G$ that is a quotient of $G_1*G_2$. It is not guaranteed that a realizing manifold for $h(G)$ can be constructed from realizing 4-manifolds for $h(G_i)$, even if the number of added relations necessary to transform $G_1*G_2$ into $G$ is a multiple of 6 (as required in the free abelian case). 

\begin{numbered example}
Suppose we have two disjoint graphs $\Gamma_1$ and $\Gamma_2$, each of which are 4-cliques. Denote the associated \RAAGs\ by $G_1$ and $G_2$. Consider the following graph $\Gamma$ in Figure \ref{boxes} associated to a quotient of $G_1*G_2$, which we will denote by $G$. 
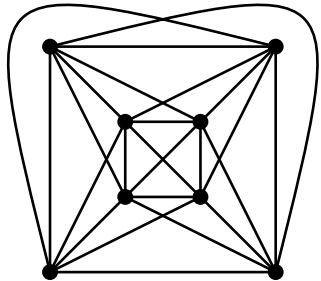
\begin{figure}[h]
\centering
\begin{tikzpicture}
\foreach \i in {1,2,3,4}{
	\path (\i,\i) coordinate (y\i);
	\path (\i,5-\i) coordinate (x\i);
	}
\foreach \i in {1,2,3,4}{\foreach \j in {1,...,4}{\draw[line width = 1pt] (x\i)--(y\j);}}	
\draw[line width =1pt] (x1)--(x4) (y1)--(y4);	
\draw[line width=1pt] (x1) .. controls (5,5) .. (x4) (y1) .. controls (0,5) .. (y4);	
\foreach \i in {1,2,3,4}{\fill (x\i) circle (3pt); \fill (y\i) circle (3pt);}
\end{tikzpicture}
\caption{A dimension 4 graph with 8 vertices and 24 edges, 4 edges short of an 8-clique}
\label{boxes}
\end{figure}

Denote the realizing manifolds for $h(G_i)$ by $M_i$. Each is a 4-torus. Let $\pi_1(M_1)$ be generated by $\{x_1,x_2,x_3,x_4\}$ and $\pi_1(M_2)$ be generated by $\{y_1,y_2,y_3,y_4\}$. Figure \ref{boxes construction} (a) shows $\Gamma_1 * \Gamma_2$ as a subgraph of $\Gamma$, with 12 extra edges. Even though 6 divides 12, it is impossible to construct a realizing manifold for $G$ with 2 4-reductions. This is most easily explained by the fact that $b_2(G) = 24$ and $m_2(G)=22$, so $26 \leq h(G)$. Two 4-reductions performed on $M_1 \# M_2$ would result in a 4-manifold with $b_2(M)=24$ and therefore cannot possibly be in $\mathcal M(G)$ without additional commutator surgeries to yield the correct $\pi_1$. Moreover, it is impossible to construct a realizing 4-manifold from $M_1\#M_2$. Any 4-reductions or commutator surgeries performed on $M_1\#M_2$ will result in a 4-manifold with $b_2 > 26$. 

An actual realizing 4-manifold for $h(G)$ can be constructed by taking the connected sum of 8 copies of $S^1\times S^3$, with $\pi_1$ generated by $\{a,b,c,d,e,f,g,h\}$. Perform the 4-reductions $[a,b,e,f]$, $[a,d,g,h]$, $[b,c,f,g]$, and $[c,d,e,fh]$ and surgery on the commutator $[b,h]$. This yields a 4-manifold $M$ with $\pi_1(M)=G$ and $b_2(M) = 26$. Figure \ref{boxes construction} (b) shows $\pi_1$ of the realizing manifold construction. 

\begin{figure}[h]
\centering
\begin{minipage}{.49\linewidth} \centering
\begin{tikzpicture}[scale=1.5] \centering
\tikzstyle{every node}=[draw, shape=circle, inner sep=.5mm, minimum width = .7cm, line width=1pt];
\path (1,4) node (a) {$x_1$}; \path (2,3) node (b) {$y_1$}; \path (3,2) node (c) {$y_4$}; \path (4,1) node (d) {$x_4$}; \path (1,1) node (e) {$x_2$}; \path (2,2) node (f) {$y_2$}; \path (3,3) node (g) {$y_3$}; \path (4,4) node (h) {$x_3$};
\draw(a)--(b)--(e)--(c)--(d)--(g)--(a)--(f)--(d) (e)--(f) (g)--(h)--(b) (h)--(c);	
\draw[line width=2pt, OliveGreen] (a) .. controls (5,5) .. (d) (e) .. controls (0,5) .. (h);	
\draw[line width=2pt, OliveGreen] (a)--(e)--(d)--(h)--(a);	
\draw[line width = 2pt, Red] (b)--(g)--(c)--(f)--(b)--(c) (f)--(g);
\end{tikzpicture} \\
(a)
\end{minipage}
\begin{minipage}{.49\linewidth} \centering
\begin{tikzpicture}[scale=1.5] \centering
\tikzstyle{every node}=[draw, shape=circle, inner sep=.5mm, minimum width = .7cm, line width=1pt];
\path (1,4) node (a) {$a$}; \path (2,3) node (b) {$b$}; \path (3,2) node (c) {$c$}; \path (4,1) node (d) {$d$};
\path (1,1) node (e) {$e$}; \path (2,2) node (f) {$f$}; \path (3,3) node (g) {$g$}; \path (4,4) node (h) {$h$};
\draw[line width = 2pt, OliveGreen] (a)--(e)--(f)--(b)--(a)--(f) (e)--(b);
\draw[line width = 2pt, Cyan] (b)--(g)--(f)--(c)--(g)--(f)--(b)--(c);
\draw[line width = 2pt, Blue] (g)--(a)--(h)--(g)--(d)--(h) (a) .. controls (5,5) .. (d);
\draw[line width =2pt, Magenta] (c)--(d)--(e)--(c);	
\draw[line width=2pt, Magenta, dashed] (e)--(f)--(d)--(h) (c)--(h) (f)--(c)--(e) (e) .. controls (0,5) .. (h);	
\draw[line width=2pt, BurntOrange] (b)--(h);
\end{tikzpicture} \\
(b)
\end{minipage}
\caption{(a) Subgraphs \textcolor{OliveGreen}{$\Gamma_1$} and \textcolor{Red}{$\Gamma_2$} in $\Gamma$ and (b) $\pi_1$ of a realizing 4-manifold construction of $\Gamma$}
\label{boxes construction}
\end{figure}
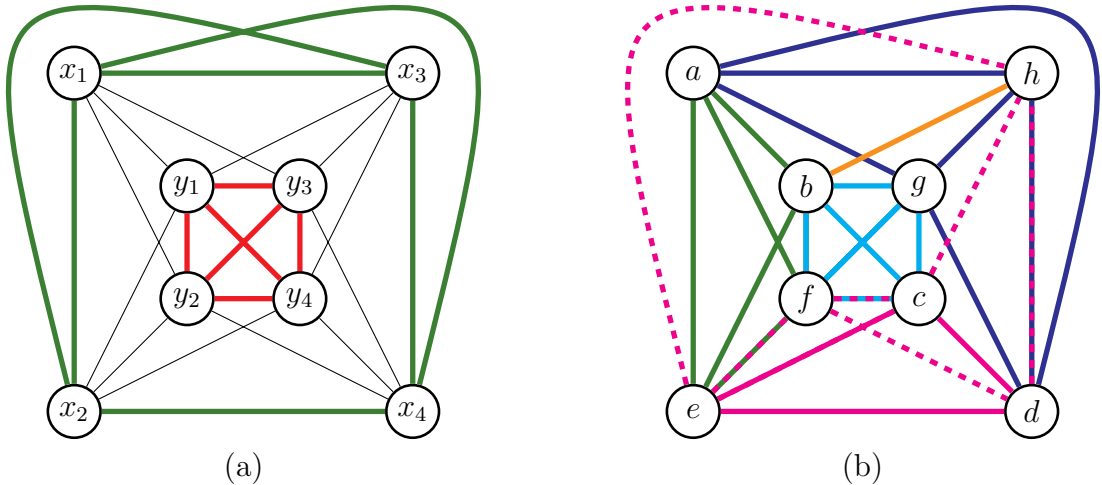
\end{numbered example}

\bigskip

Theorems \ref{freeproducts} and \ref{wedging} provide a beginning towards finding an inductive process of computing $h$ for any \RAAG. Recall that for \cm\ \RAAGs, Theorem \ref{freeproducts} asserts $h(G_1*G_2) =h(G_1)+h(G_2)$. If $G$ is created by the identification of pairwise non-commuting generators from $G_1$ and $G_2$, Theorem \ref{wedging} asserts $h(G) = h(G_1)+h(G_2)$. Logically, the next step is to find a relationship between $h(G)$ and $h(G_1) + h(G_2)$ if $G$ is formed by the identification of generators that \emph{do} commute. This is the situation in which $\Gamma$, a graph associated to $G$, is created by the joining of two graphs $\Gamma_1$ and $\Gamma_2$ along an edge or many edges. Unfortunately, combining graphs along edges yields complications in calculating $h$ because (\ref{formmod2}) does not necessarily split as a direct sum, and so $m_2(G)$ may not equal $m_2(G_1) + m_2(G_2)$.

In general, it is difficult to have inductive results involving graphs. It is not clear whether we should induct on vertices or edges. Because vertices alone correspond only to $b_1$, adding vertices without adding edges changes nothing in terms of computing $h$. However, adding edges can change the value of $h$ drastically.

If the added edge does not form a new 4-clique in the graph, we know from Theorem \ref{breakdown} that this increases $h$ by 2. However, if adding an edge creates additional 4-cliques in the graph, the change in $h$ depends on the structure of the graph. In fact, adding one edge in the graph may result in an entirely different construction of a new realizing manifold.

In light of this difficulty, the only known examples of \cm\ \RAAGs\ belong to infinite families of graphs in which induction on patterns allows us to calculate $h$ for all groups in the family. Beyond finding new families of graphs, however, it is unclear how to proceed in proving this conjecture. 

%%%%%%%%%%%%%%%%%%%%%%%%%%%%%%%%%%%%%%%%%%%% APPENDIX %%%%%%%%%%%%%%%%%%%%%%%%%%%%%%%%%%%%%%%%%%%%%%%%%%%%%%%%%%%%%%
\appendix

%%%%%%%%%%%%%%%%%%%%%%%%%%%%%%%%%%%%%%%%%%%%%%%%%%%%%%%%%%%%%%%%%%%%%%%%%%%%%%%%%%%%%%%%%%%%%%%%%%%%%%%%%%%%%%%%%%%%
%%%%%%%%%%%%%%%%%%%%%%%%%%%%%%%%%%%%%%%%%%%% SAGE CODE %%%%%%%%%%%%%%%%%%%%%%%%%%%%%%%%%%%%%%%%%%%%%%%%%%%%%%%%%%%%%
%%%%%%%%%%%%%%%%%%%%%%%%%%%%%%%%%%%%%%%%%%%%%%%%%%%%%%%%%%%%%%%%%%%%%%%%%%%%%%%%%%%%%%%%%%%%%%%%%%%%%%%%%%%%%%%%%%%%
\section{Sage code to compute $\lowercase{m}_2(G)$} \label{sage code}

The following is the Sage code used to compute $m_2(G)$. All that is necessary as an input is the adjacency matrix of the graph associated to $G$. 

\medskip

\begin{Verbatim}[frame=single]
# This function takes an adjacency matrix A and returns the number of edges 
# of the graph.

def count(A):
    n=A.nrows()
    c = 0 # c will represent the number of 1's in the matrix
    for i in range(n):
        for j in [i+1..n]:
            if A[i,j-1] == 1:
                c = c+1
    return c
\end{Verbatim}

\medskip

\begin{Verbatim}[frame=single]
# This function is called by create_matrix() and returns the correct row 
# and column of A so that the entry into the matrix D is represented by  
# the correct generators of the form (4.2).

def find_row(A,k,l):  
    n = A.nrows() 
    for i in range(n): 
        if A[i,0]==k and A[i,1] == l:
            return i+1
\end{Verbatim}

\medskip

\begin{Verbatim}[frame=single]
# This function generates the matrix representing the form (2) from  
# the adjacency matrix of the graph, B.

def create_matrix(B): # B is the input adjacency matrix of the graph
    n = B.ncols() # better be square!
    m=0 
    edges = matrix(count(B),2) # edges is a matrix that stores all the  
                               # edges of the graph
    for i in [1..n]:
        for j in [i+1..n]:
            if B[i-1,j-1]==1:
                edges[m,0]=i
                edges[m,1]=j
                m=m+1
    D = matrix(count(B)) # D will be the matrix outputted by the function
    p=1
    for i in [1..n]:
       for j in [i+1..n]:
          if B[i-1,j-1]==1:
            for k in [j+1..n]:
              if B[i-1,k-1]==1 and B[j-1,k-1]==1:
                for l in [k+1..n]:
                   if B[i-1,l-1]==1 and B[j-1,l-1]==1 and B[k-1,l-1]==1:
                     D[find_row(edges,i,j)-1,find_row(edges,k,l)-1] = p
                     D[find_row(edges,k,l)-1,find_row(edges,i,j)-1] = p
                     D[find_row(edges,i,k)-1,find_row(edges,j,l)-1] = -p
                     D[find_row(edges,j,l)-1,find_row(edges,i,k)-1] = -p
                     D[find_row(edges,i,l)-1,find_row(edges,j,k)-1] = p
                     D[find_row(edges,j,k)-1,find_row(edges,i,l)-1] = p
                     p=p+1
    return D
\end{Verbatim}    

\medskip

\begin{Verbatim}[frame=single]
# This function takes a matrix with entires in {0,+/- 1,...,+/- n} and a 
# list [x_1,...,x_n] and replaces all +/- i with x_i.

def substitute(M,list):
    n = len(list)
    d = M.ncols() # better be square!
    newM = matrix(d)
    for i in range(d):
        for j in range(d):
            if M[i,j] != 0:
                newM[i,j] = list[abs(M[i,j])-1]
    return newM
\end{Verbatim}

\medskip

\begin{Verbatim}[frame=single]
# This function returns `True' if n is even, `False' if n is odd.

def is_even(n):
    return n%2 == 0
\end{Verbatim}

\medskip

\begin{Verbatim}[frame=single]
# This function returns a list of the 2^n possible base 2 representations
# of the numbers 0,1,...2^{n-1}, to be substituted later into the matrix 
# with numbers 1,...,n.

def possible_combinations(n):
    the_list = []
    for i in range(2^n):
        temp_list = ZZ(i).digits(2) # ZZ(i).digits(2) writes the integer i 
                                    # in binary (mod2)
        m = len(temp_list) # I have to add the trailing zeros...
        for j in [m..n-1]:
            temp_list.append(0)
        the_list.append(temp_list)
    return the_list
\end{Verbatim}

\medskip
    
\begin{Verbatim}[frame=single]
# This function takes a generic matrix whose entries are in 0,1,...,n
# and it will compute the maximum rank of this modulo mod (0 means over ZZ)
# (which must be a prime, or else it will break!)
# with the max being taken over possible assignments of {0,1}
# to the elements 1,...,n in the matrix M-generic.

def max_rank(M_generic, n=0, mod=0):
    if n == 0:
        n = M_generic.height() # OMG there is a function that finds the 
                               # maximimum integer!
    m = M_generic.nrows()       
    if is_even(m) == False:
        m = m-1                            
    rank_list = []
    S = ZZ
    if mod > 0:
        S = GF(mod)
    combos = possible_combinations(n)
    for c in combos:
        M = substitute(M_generic,c)
        M = matrix(S,M)
        rank_list.append(M.rank())
        if max(rank_list) == m:  # Break in the loop if we find highest 
                                 # possible rank
            break          
    return max(rank_list)
\end{Verbatim}

\medskip

\begin{Verbatim}[frame=single]
# Example: A is a matrix representing the adjacency graph of two 4-cliques 
# attached along a 3-clique.

A = matrix([
[0,1,1,1,0],
[1,0,1,1,1],
[1,1,0,1,1],
[1,1,1,0,1],
[0,1,1,1,0]])

import time
t = time.time()
print(max_rank(create_matrix(A),0,mod=2))
elapsed = time.time() - t; elapsed
\end{Verbatim}

\textcolor{Blue}{6} 

\textcolor{Blue}{0.007825136184692383}

\bigskip 

\noindent This next function is similar to \verb|max_rank()| however it also returns the coefficients of the $\alpha_i$ if the for loop breaks; that is, if $b_2(G)$ is odd and there exists $\alpha$ such that $m_2(G) = b_2(G) -1$, it returns the coefficients of $\alpha$ as well as the rank. 

\medskip

\begin{Verbatim}[frame=single]
def max_rank_alpha(M_generic, n=0, mod = 0):
    if n == 0:
        n = M_generic.height() # OMG there is a function that finds the 
                               # maximimum integer!
    m = M_generic.nrows()  
    if is_even(m) == False:
        m = m-1                            
    rank_list = []
    S = ZZ
    if mod > 0:
        S = GF(mod)
    combos = possible_combinations(n)
    for c in combos:
        M = substitute(M_generic,c)
        M = matrix(S,M)
        rank_list.append(M.rank())
        #print (M, M.rank(),c)
        if max(rank_list) == m:
            print c
            break                  
    return max(rank_list)
\end{Verbatim}

\providecommand{\bysame}{\leavevmode\hbox to3em{\hrulefill}\thinspace}
\providecommand{\MR}{\relax\ifhmode\unskip\space\fi MR }
% \MRhref is called by the amsart/book/proc definition of \MR.
\providecommand{\MRhref}[2]{%
  \href{http://www.ams.org/mathscinet-getitem?mr=#1}{#2}
}
\providecommand{\href}[2]{#2}

\end{document}